\date{UCSD Center for Computational Mathematics\\
      Technical Report CCoM-21-01\\ October 1, 2021}
\title{Projected-Search Methods for \\ Bound-Constrained Optimization}
\author{Michael W. Ferry\thanks{NVIDIA Corporation, Hillsboro, Oregon
    (\texttt{michael@mwferry.com}).}
    \and
    Philip E. Gill%
    \thanks{Department of Mathematics, University of
    California, San Diego, La Jolla, CA 92093-0112 (\texttt{pgill@ucsd.edu},
    \texttt{elwong@ucsd.edu}, \texttt{miz151@ucsd.edu})
    Research supported in part by National Science Foundation grants
    DMS-0915220 and DMS-1318480. The content is solely
    the responsibility of the authors and does not necessarily represent
    the official views of the funding agencies. }
    \and
    Elizabeth Wong%
    \addtocounter{footnote}{-1}\footnotemark
    \and
    Minxin Zhang%
    \addtocounter{footnote}{-1}\footnotemark}
\newtheorem{proposition}{Proposition}[section]
\begin{document}

\maketitle

\setcounter{page}{1}

\newcommand{\blackslug}{\hbox{\hskip 1pt \vrule width 4pt height 6pt depth 1.5pt
  \hskip 1pt}}
\newcommand{\pmat}[1]{\begin{pmatrix}#1\end{pmatrix}}

\makeatletter
\newcommand{\minim}{\mathop{\operator@font{minimize}}}
\newcommand{\subject}{\mathop{\operator@font{subject\ to}}}

\providecommand{\diag} {\mathop{\operator@font{diag}}}
\newcommand{\supmax}{^{\operator@font{max}}}
\newcommand{\supmin}{^{\operator@font{min}}}
\newcommand{\submax}{_{\operator@font{max}}}
\newcommand{\submin}{_{\operator@font{min}}}
\newcommand{\subminus}{_{\scriptscriptstyle -}}
\newcommand{\subplus }{_{\scriptscriptstyle +}}
\newcommand{\supminus }{^{\scriptscriptstyle -}}
\newcommand{\supplus }{^{\scriptscriptstyle +}}
\newcommand{\subzero }{_{\scriptscriptstyle 0}}
\makeatother
\newcommand{\superplus}{^{+}}
\newcommand{\superminus}{^{-}}

\newcommand{\st}{:}
\newcommand{\sub}[1]{_{\mbox{\protect\tiny #1}}}
\newcommand{\minimize}[1]{{\displaystyle\minim_{#1}}}

\newcommand{\LBFGSB}{\texttt{L-BFGS-B}}
\providecommand{\Matlab}{{\sc Matlab}}
\newcommand{\etal}{et al.}  %%% No italics!!  Also, must say \etal\

\newcommand{\D}{_{\scriptscriptstyle D}}
\newcommand{\M}{_{\scriptscriptstyle M}}
\renewcommand{\L}{_{\scriptscriptstyle L}}

\newcommand{\drop}{^{\null}}
\newcommand{\Set}[1]{\big\{\, #1 \,\big\}}
\newcommand{\starsymbol}{\ast}
\newcommand{\superstar}{^\starsymbol}

\providecommand{\varPi}{{\mathit\Pi}}
\providecommand{\varDelta}  {{\mathit\Delta}}
\newcommand{\Deltait}{\varDelta}
\newcommand{\Omegait}{\varOmega}
\newcommand{\Piit}{\varPi}

\newcommand{\sgap}{\;}
\newcommand{\mgap}{\;\;}
\newcommand{\bgap}{\;\;\;}

\newcommand{\abs}[1]{|#1|}
\newcommand{\Mod}[1]{\left|#1\right|}
\newcommand{\norm}[1]{\|#1\|}
\newcommand{\dualnorm}[1]{\norm{#1}\D}
\newcommand{\infnorm}[1]{\norm{#1}_{\infty}\drop}
\newcommand{\elem}[1]{[\,#1\,]} % \elem{x\B}_j element j of vector x\B
\newcommand{\elemd}[1]{[#1]^{\null}} %  \elem{x\B}_j element j of vector x\B

\newcommand{\alphabar}{\skew3\bar\alpha}
\newcommand{\alphahat}{\skew3\hat\alpha}
\newcommand{\alphastar}{\alpha\superstar}
\newcommand{\xstar}{x\superstar}

\newcommand{\Grad}{\nabla\!}
\newcommand{\Hess}{\nabla^2\!}
\newcommand{\half}  {{\textstyle\frac12}}

\newcommand{\id}{_i\drop}
\newcommand{\jm}[1]{_{j-#1}}
\newcommand{\kd}{_k\drop}
\newcommand{\kdp}[1]{_{k+#1}\drop}
\newcommand{\kdm}[1]{_{k-#1}\drop}
\newcommand{\km}[1]{_{k-#1}}
\newcommand{\kp}[1]{_{k+#1}}

\newcommand{\subM}{_{\scriptscriptstyle M}}

\newcommand{\dottimes}{\mathop{\mbox{{\,\boldmath$\cdot$}\,}}}
\newcommand{\dotdiv}{\mathop{\mbox{{\,\boldmath$\cdot/$}\,}}}
\newcommand{\inv}{^{-1}}
\newcommand{\T}{^T\!}
\newcommand{\Tinv}{^{-T}\!}
\newcommand{\subU}{_{\scriptscriptstyle U}}
\newcommand{\U}{\subU}
\newcommand{\up}[1]{^{(#1)}}

\newcommand{\Hhat}{\skew6\widehat H}

\newcommand{\Gbar}{\bar G}
\newcommand{\setA}{\mathcal{A}}
\newcommand{\setI}{\mathcal{I}}
\newcommand{\setG}{\mathcal{G}}
\newcommand{\setGbar}{{\mathcal\Gbar}}
\newcommand{\setK}{\mathcal{K}}
\newcommand{\setL}{\mathcal{L}}
\newcommand{\setP}{\mathcal{P}}
\newcommand{\setW}{\mathcal{W}}

\newcommand{\halfthinsp}{\mskip 0.5 mu}  %  half     thin space
\newcommand{\lwords}[1]{\text{#1}\mgap}     % text automatically adjusts the size
\newcommand{\rwords}[1]{\mgap\text{#1}}
\newcommand{\lwordss}[1]{\text{#1}\quad}    % text automatically adjusts the size
\newcommand{\rwordss}[1]{\quad\text{#1}}
\newcommand{\lwordsss}[1]{\text{#1}\qquad}  % text automatically adjusts the size
\newcommand{\rwordsss}[1]{\qquad\text{#1}}
\newcommand{\words}[1]{\mgap\text{#1}\mgap}
\newcommand{\wordss}[1]{\quad\text{#1}\quad}
\newcommand{\wordsss}[1]{\qquad\text{#1}\qquad}

\newcommand{\algstrut}{\rule[-.6ex]{0pt}{2.5ex}}% Total height 3.1
\newcommand{\Astrutu}{\rule{0pt}{2.75ex}}
\newcommand{\agap}{\hspace{1.5ex}}

\newcommand{\REPEAT}{\textbf{repeat}\Astrut}
\newcommand{\WHILE}{\textbf{while}\hskip2pt}
\newcommand{\FOR}{\textbf{for}\hskip2pt}
\newcommand{\IF}{\textbf{if}\hskip2pt}

\newcommand{\logical}[1]{\{\mbox{#1}\}}
\newcommand{\proc}[1]{\setbox4=\hbox{\noindent\strut#1}\boxit{\box4}}
\newcommand{\procc}[1]{\setbox4=\vbox{\hsize 20pc \noindent\strut#1}\boxit{\box4}}
\newcommand{\DO}{\hskip2pt\textbf{do}\hskip2pt\Astrut}
\newcommand{\UNTIL}{\textbf{until}\hskip2pt}
\newcommand{\BEGIN}{\textbf{begin}}
\newcommand{\END}{\textbf{end}}
\newcommand{\ENDIF}{\textbf{end if}}
\newcommand{\ENDWHILE}{\textbf{end while}}
\newcommand{\ENDREPEAT}{\textbf{end repeat}}
\newcommand{\ENDDO}{\textbf{end do}}
\newcommand{\ENDFOR}{\textbf{end (for)}}
\newcommand{\THEN}{\hskip2pt\textbf{then}\hskip2pt}
\newcommand{\ELSE}{\hskip2pt\textbf{else}\hskip2pt}
\newcommand{\FIRSTELSE}{\textbf{else}\hskip2pt}
\newcommand{\ELSEIF}{\textbf{else if}}
\newcommand{\RETURN}{\textsf{return}}
\newcommand{\FIRST}{\hskip-2pt}  % \FIRST\ELSE if \ELSE starts a line

\newcommand{\STOP}{\mathsf{stop}}
\newcommand{\BREAK}{\mathsf{break}}
\newcommand{\TRUE}{\mathsf{true}}
\newcommand{\FALSE}{\mathsf{false}}
\newcommand{\true}{\mathsf{true}}
\newcommand{\false}{\mathsf{false}}
\newcommand{\NOT}{\mathop{\mathsf{not}\,}}
\newcommand{\AND}{\mathop{\;\mathsf{and}\;}}
\newcommand{\OR}{\mathop{\;\mathsf{or}\;}}

\input{quasiwolfemacros}

\vspace{-20pt}

\begin{abstract}
\noindent
Projected-search methods for bound-constrained optimization are based on
performing a search along a piecewise-linear continuous path obtained by
projecting a search direction onto the feasible region. A potential benefit
of a projected-search method is that many changes to the active set can be
made at the cost of computing a single search direction.

As the objective function is not differentiable along the search path, it
is not possible to use a projected-search method with a step that satisfies
the Wolfe conditions, which require the directional derivative of the
objective function at a point on the path.  For this reason, methods based
in full or in part on a simple backtracking procedure must be used to give
a step that satisfies an ``Armijo-like'' sufficient decrease condition.  As
a consequence, conventional projected-search methods are unable to exploit
sophisticated safeguarded polynomial interpolation techniques that have
been shown to be effective for the unconstrained case.

This paper concerns the formulation and analysis of projected-search
methods based on a new \emph{quasi}-Wolfe line search that is appropriate
for piecewise differentiable functions.  The behavior of the line search is
similar to that of a conventional Wolfe line search, except that a step is
accepted under a wider range of conditions. These conditions take into
consideration steps at which the restriction of the objective function on
the search path is not differentiable. Two new classes of method are
proposed that may be broadly categorized as active-set methods and
interior-point methods.  Computational results are given for two specific
methods from these general classes: a projected-search active-set method
that uses a limited-memory quasi-Newton approximation of the Hessian; and a
projected-search primal-dual interior-point method.  The results show that
in these contexts, a quasi-Wolfe line search is substantially more
efficient and reliable than an Armijo line search.

\end{abstract}

\thispagestyle{plain}
\markboth{Projected-Search Methods}{}
\newpage

\section{Introduction}
%---------------------
\medskip
This paper describes a new framework for the development of a general class of
projected-search methods for the bound-constrained problem
\begin{gather}
     \minimize{x\in \Re^n}\mgap f(x)
\bgap\subject             \mgap  x\in \FR, \tag{BC}\label{BC}
\end{gather}
where $f: \Re^n \mapsto \Re$ is a twice-continuously differentiable
function and $\FR = \Set{ x\in\Re^n \st \bL \le x \le \bU}$ for vectors of
lower and upper bounds such that $\bL \le \bU$ (with all inequalities
defined componentwise). The gradient of $f$ at $x$ is denoted by $\Grad
f(x)$. The active set of variables on their bounds at $x\in \FR$ is denoted
by $\setA(x)$, i.e., $\setA(x) = \tSet{ i \st x_i = \bL_i \words{or} x_i =
  \bU_i }$.

Projected-search methods for problem (\ref{BC}) generate a sequence of
feasible iterates $\tSet{x_k}_{k=0}^\infty$ such that $x\kp1 = \ProjF(x_k +
\alpha_k p_k)$, where $p_k$ is a descent direction for $f$ at $x_k$,
$\alpha_k$ is a scalar step length, and $\ProjF(x)$ is the projection of
$x$ onto the feasible region, i.e.,
\[
 \elem{\ProjF(x)}_i
   =   \begin{cases}
         \bL_i & \lwords{if} x_i < \bL_i, \\
         \bU_i & \lwords{if} x_i > \bU_i, \\
           x_i & \lwords{otherwise.}
       \end{cases}
\]
The new iterate may be written as $x\kp1 = x_k(\alpha_k)$, where
$x_k(\alpha)$ denotes the vector $x_k(\alpha) = \ProjF(x_k + \alpha p_k)$.
A potential benefit of a projected-search method is that many changes to
the active set can be made at the cost of computing a single search
direction.  The projected-search methods of Goldstein~\cite{Gol64}, Levitin
and Polyak~\cite{LevP66}, and Bertsekas~\cite{Ber76} are based on using the
gradient-descent direction $p_k = -\Grad f(x_k)$.  Bertsekas~\cite{Ber82a}
proposes a method based on computing $p_k$ using a Newton-like method.
Calamai and Mor\'e~\cite{CalM87} consider methods that identify the optimal
active set using a projected-search method and then switch to Newton's
method.  Projected-search methods based on computing $p_k$ using a
quasi-Newton method are proposed by Ni and Yuan~\cite{NiY97}, Kim, Sra and
Dhillon~\cite{KimSD10}, Ferry~\cite{Fer11}, and Ferry~\etal~\cite{FerGWZ20b}.

Many methods for unconstrained minimization generate a sequence of iterates
$\tSet{x_k}_{k=0}^\infty$ such that $x\kp1$ is chosen to give a decrease in
$f$ that is at least as good as a fixed fraction $\cA$ $(0 < \cA < \half)$ of
the decrease in the local affine model $f(x_k) + \Grad f(x_k)\T (x - x_k)$.
If $x\kp1$ is computed as $x\kp1 = x_k + \alpha_k p_k$, where $p_k$ is a
descent direction for $f$ at $x_k$ and $\alpha_k$ is a positive scalar,
then the decrease condition may be written as
\begin{equation}              \label{eq:Armijo}
  f(x_k + \alpha_k p_k)
    \le f(x_k) + \alpha_k\cA \Grad f(x_k)\T p_k,
\end{equation}
which is known as the Armijo condition (see, e.g., Armijo~\cite{Arm66},
Ortega and Rheinboldt~\cite{OrtR00}).  Most Armijo line searches are
implemented as a simple backtracking procedure in which an initial step is
reduced by a constant factor until the Armijo condition (\ref{eq:Armijo})
is satisfied.  Alternatively, backtracking may be used in conjunction with
a simple quadratic interpolation scheme using $f(x_k)$, $\Grad f(x_k)\T
p_k$ and $f(x_k + \alpha p_k)$ at each trial $\alpha$ (see Dennis and
Schnabel~\cite{DenS96}).

Many practical methods use an $\alpha_k$ that satisfies an additional
condition on the directional derivative $\Grad f(x_k + \alpha_k p_k)\T
p_k$. In particular, the strong Wolfe conditions require that $\alpha_k$
satisfies both the Armijo condition (\ref{eq:Armijo}) and
\begin{equation} \label{eq:strongWolfe}
  \abs{\Grad f(x_k + \alpha_k p_k)\T p_k}
    \le \cW\abs{\Grad f(x_k)\T p_k},
\end{equation}
where $\cW$ is a preassigned scalar such that $\cW \in (\cA,1)$ (see, e.g.,
Wolfe~\cite{Wol69}, Mor{\'e} and Thuente~\cite{MorT94}, and Gill \etal{}
\cite{GilMSW82}).  The strong Wolfe conditions allow $\cW$ to be chosen to
vary the accuracy of the step.  If $\cA$ is fixed at a value close to zero
(e.g., $10^{-4}$), then a value of $\cW$ close to $\cA$ gives a ``tighter''
or more accurate step with respect to closeness to a critical point of
$\Grad f(x_k + \alpha p_k)\T p_k$.  A value of $\cW$ close to one results
in a ``looser'' or more approximate step.  A Wolfe line search is able to
exploit sophisticated safeguarded polynomial interpolation techniques to
provide methods that are more reliable and efficient than those based on
backtracking (see, e.g., Hager~\cite{Hag89} and Mor\`e and
Thuente~\cite{MorT94}).

\smallskip
In a projected-search method, the function $x_k(\alpha)$ defines a
piecewise-linear continuous path, and the function $f\big(x_k(\alpha)\big)$
is not necessarily differentiable along $x_k(\alpha)$.  In particular,
$f\big(x_k(\alpha)\big)$ may have a ``kink'' at any $\alpha > 0$ at which
$\elem{p_k}_i\ne 0$ and either $\elem{x_k+\alpha p_k}_i = \bL_i$ or
$\elem{x_k + \alpha p_k}_i = \bU_i$.  This implies that it is not possible
to use a line search based on the conventional Wolfe conditions.  Thus,
existing projected-search methods are restricted to using a search based on
satisfying an Armijo-like condition along the path $x_k(\alpha)$. For the
case where $p_k=-\Grad f(x_k)$, a commonly used Armijo-like condition is
\begin{equation}              \label{eq:Armijo-like}
  f\big(x_k(\alpha_k)\big) \le f(x_k) + \cA \Grad f(x_k)\T (x_k(\alpha)-x_k),
\end{equation}
proposed by Bertsekas~\cite{Ber76} (see also, Calamai and
Mor\'e~\cite{CalM87}).  However, for a general $p_k$, this may not be a
sufficient-decrease condition for a backtracking search as there is no
guarantee that the second term on the right-hand side of
(\ref{eq:Armijo-like}) is negative if the path $x_k(\alpha)$ changes
direction.  An Armijo-like condition that is appropriate for a general
descent direction $p_k$ is
\begin{equation}              \label{eq:quasiArmijo}
  f\big(x_k(\alpha_k)\big) \le f(x_k) + \alpha_k\cA \Grad f(x_k)\T p_k
\end{equation}
(see, e.g., Ni and Yuan~\cite{NiY97} and Kim, Sra and
Dhillon~\cite{KimSD10}).  Throughout the following discussion,
(\ref{eq:quasiArmijo}) is referred to as the \emph{quasi-Armijo} condition.
If $\gamma$ and $\sigma$ denote fixed parameters such that $\gamma > 0$ and
$\sigma\in (0,1)$, then a quasi-Armijo step has the form $\alpha_k =
\gamma\sigma^{t_k}$, where $t_k$ is the smallest nonnegative integer such
that the quasi-Armijo condition (\ref{eq:quasiArmijo}) is satisfied.  Other
sufficient decrease conditions have been proposed. For example,
Bertsekas~\cite{Ber82a} considers an Armijo-like condition based on a
combination of (\ref{eq:Armijo-like}) and (\ref{eq:quasiArmijo}), with the
term (\ref{eq:Armijo-like}) defined with components of a scaled
gradient-descent direction.

\subsection{Contributions and organization of the paper}
%-------------------------------------------------------
Several contributions are made to the design and analysis of algorithms for
constrained optimization.
(i) A \emph{quasi-Wolfe} search is proposed that is specifically designed
for use with a piecewise-linear continuous search path.  The behavior of
the search is similar to that of a conventional Wolfe line search, except
that a step is accepted under a wider range of conditions that take into
account steps at which $f$ is not differentiable.  As in the unconstrained
case, the quasi-Wolfe step can be computed using safeguarded polynomial
interpolation and the accuracy of the step can be adjusted.
(ii) Two new classes of projected-search method are proposed that may be
broadly categorized as active-set methods and interior-point methods.  The
class of active-set methods is characterized by the use of a descent
direction $d_k$ computed with respect to a perturbed or extended active set
(a similar set is used by Bertsekas~\cite{Ber82a}).  The vector $d_k$ may
be computed in many ways, e.g., using an exact or modified Newton-like
method or a quasi-Newton method.  This direction is used as the basis for
the computation of a search direction $p_k$, and an associated step length
$\alpha_k$ such that $f\big(\ProjF(x_k + \alpha_k p_k)\big) < f(x_k)$.  The
class of interior-point methods project the underlying Newton or
Newton-like direction onto a subset of the feasible region defined by
perturbing the bounds.
(iii) The convergence properties are established under assumptions that are
typical in the analysis of projected-search methods.  In the case of the
projected search active-set method, it is shown that if the iterates
converge to a nondegenerate stationary point, then the optimal active set
is identified in a finite number of iterations.  It follows that once the
optimal active set has been identified, any method in this class will have
the same convergence rate as its unconstrained counterpart.

\medskip
The paper is organized in seven sections. The standard results associated
with a conventional Wolfe line search are reviewed in
Section~\ref{sec:Wolfe-step}.  Analogous results are established for the
quasi-Wolfe search in Section~\ref{sec:qWolfe-step}.
Section~\ref{sec:Proj-search-methods} considers two broad classes of
projected-search methods applied to a constrained optimization problem.
% The proposed framework for projected-search methods is formulated in
% Section~\ref{sec:Active-set-methods}.
The convergence properties of the methods are established in
Section~\ref{sec:convergence}.  Section~\ref{sec:numerical} concerns the
numerical performance of the methods.  Comparisons with the
state-of-the-art package \LBFGSB{} are also presented for calibration
purposes.  Section~\ref{sec:Summary} concludes with a summary and
conclusions.

\subsection{Notation}
%-------------------------------------------------------
%% Given vectors $x$ and $y$, the vector consisting of $x$ augmented by $y$ is
%% denoted by $(x,y)$.
The vectors $e$ and $e_j$ denote, respectively, the column vector of ones
and the $j$th column of the identity matrix $I$. The dimensions of $e$,
$e_j$ and $I$ are defined by the context.  The subscript $i$ is appended to
vectors to denote the $i$th component of that vector, whereas the subscript
$k$ is appended to a vector to denote its value during the $k$th iteration
of an algorithm, e.g., $x_k$ represents the value for $x$ during the $k$th
iteration, whereas $\elem{x_k}_i$ denotes the $i$th component of the vector
$x_k$.  The $i$th component of the gradient of the scalar-valued function
$f$ is denoted by $\Grad_i f(x)$.
%% Given vectors $a$ and $b$ with the same dimension, vectors with $i$th
%% component $a_i b_i$ and $a_i/ b_i$ are denoted by $a\dottimes b$ and
%% $a\dotdiv b$ respectively.  Similarly, $\min(a,b)$ is a vector with
%% components $\min(a_i,b_i)$.
The vector two-norm and its induced matrix norm are denoted by
$\norm{\cdot}$.

\section{The Wolfe Line Search} \label{sec:Wolfe-step}
%------------------------------
A typical Wolfe line search may be viewed as a two-stage process. The first
stage involves the determination of an interval containing a Wolfe step, if
one exists.  The second stage locates a Wolfe step in this interval using
safeguarded polynomial interpolation. If the first stage fails, then the
objective function is necessarily unbounded below.  The key principle that
drives the first stage is that certain conditions may be formulated that
determine if an interval contains a Wolfe step. Much of the discussion in
this section is based on the work of Mor\'e and Sorensen~\cite{MorS84},
Mor\`e and Thuente~\cite{MorT94}.  More information may be found in
Wolfe~\cite{Wol72}.  The schematic description of the line-search algorithm
given in Algorithm~\ref{alg:schematic-Wolfe} below follows that of Nocedal
and Wright~\cite{NocW99}.  In order to simplify the notation we omit the
suffix $k$ and consider the univariate function $\phi(\alpha) = f(x +
\alpha p)$ for fixed vectors $x$ and $p$. With this notation the Wolfe
conditions (\ref{eq:Armijo}) and (\ref{eq:strongWolfe}) may be written in
the form
\[
      \phi (\alpha)  \le \phi(0) + \alpha\cA \phi'(0),      \wordss{and}
 \mod{\phi'(\alpha)} \le \cW \mod{\phi'(0)}.
\]
Much of the theory associated with a Wolfe line search is based on the
properties of the auxiliary function
\[
  \omega(\alpha) = \phi(\alpha) - \big(\phi(0) + \alpha\cA \phi'(0) \big),
     \wordss{with}
  \omega'(\alpha) = \phi'(\alpha) - \cA \phi'(0).
\]
Mor\'e and Sorensen~\cite{MorS84} show that a minimizer of this function at
which $\omega$ is negative satisfies the Wolfe conditions.  An example of a
function $\phi$ and its associated auxiliary function $\omega$ are depicted
in Figure~\ref{figshiftedArmijo}.

\begin{figure}[htb]
\begin{center}
\includegraphics[height=0.3\textheight,width=0.55\textwidth]{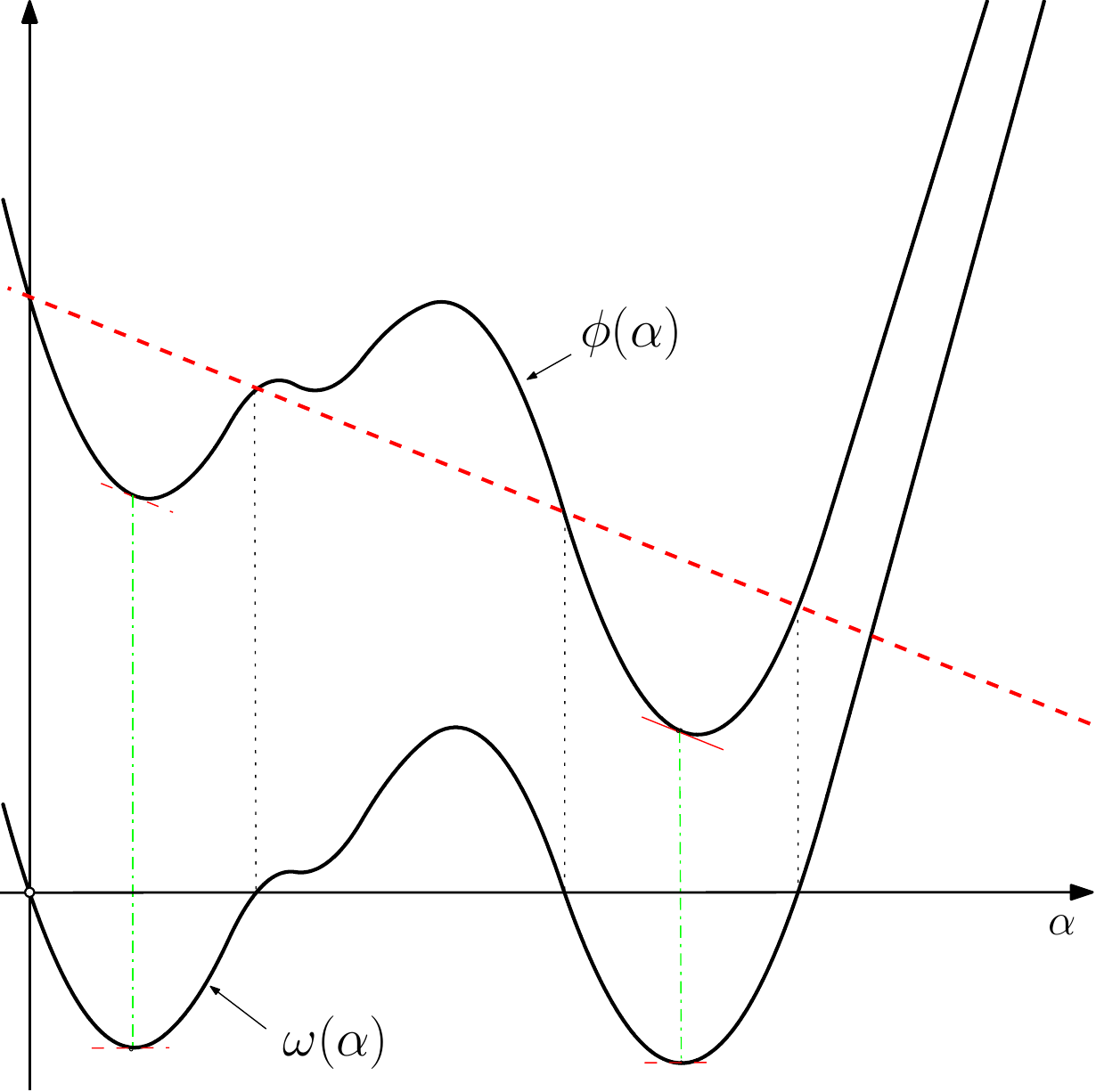}
\parbox[t]{.925\textwidth}{
\caption{\label{figshiftedArmijo}%
\small The graph depicts $\phi(\alpha) = f(x+\alpha p)$ as a function of
       positive $\alpha$, with the shifted function $\omega(\alpha)
       = \phi(\alpha) - \big(\phi(0) + \alpha\cA \phi'(0) \big)$
       superimposed. The dashed line represents the affine function
       $\phi(0) + \alpha\cA \phi'(0)$. }}
\end{center}
\end{figure}

The first stage of a Wolfe line search is motivated by the following
proposition.

\begin{proposition} \label{propUC:Wstage1}
Let $\tSet{\alpha_i}_{i=0}^\infty$ be a strictly monotonically increasing
sequence with $\alpha_0 = 0$. Let $\phi$ and $\omega$ be continuously
differentiable univariate functions such that $\phi'(0) <0$ and
$\omega(\alpha) = \phi(\alpha) - \big(\phi(0) + \alpha\cA \phi'(0) \big)$
with $0< \cA < 1$.  If there exists a least bounded index $j$ such that at
least one of the following conditions is true:
\begin{enumerate}[\bf(a)]
\item\label{Wstage1(a)} $\alpha_j$ is a Wolfe step;

\item\label{Wstage1(b)} $\omega(\alpha_j) \ge \omega(\alpha\jm1)$; or

\item\label{Wstage1(c)} $\omega'(\alpha_j) \ge 0$,
\end{enumerate}
then there exists a Wolfe step $\alphastar \in [\alpha\jm1,\alpha_j]$.
Collectively, {\rm(\ref{Wstage1(a)})--(\ref{Wstage1(c)})} are called the
{\rm stage-one conditions}.
\end{proposition}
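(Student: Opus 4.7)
The plan is to reduce everything to one observation: a step $\alphastar > 0$ is a Wolfe step whenever $\omega(\alphastar) < 0$ and $\omega'(\alphastar) = 0$. Indeed, $\omega(\alphastar) < 0$ is exactly the Armijo condition, while $\omega'(\alphastar) = 0$ gives $\phi'(\alphastar) = \cA\phi'(0)$, so that $\abs{\phi'(\alphastar)} = \cA\abs{\phi'(0)} < \cW\abs{\phi'(0)}$ using $0 < \cA < \cW < 1$ and $\phi'(0) < 0$. Case~(\ref{Wstage1(a)}) is then immediate with $\alphastar = \alpha_j$, so the remaining work is to exhibit such an $\alphastar$ in $[\alpha\jm1,\alpha_j]$ whenever (\ref{Wstage1(b)}) or (\ref{Wstage1(c)}) holds.

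First I would unpack the ``least index'' hypothesis. For every $1 \le i < j$, none of conditions (\ref{Wstage1(a)})--(\ref{Wstage1(c)}) holds at $\alpha_i$, so in particular $\omega(\alpha_i) < \omega(\alpha_{i-1})$ and $\omega'(\alpha_i) < 0$. Starting from $\omega(\alpha_0) = 0$, this produces the chain $0 = \omega(\alpha_0) > \omega(\alpha_1) > \cdots > \omega(\alpha\jm1)$ whenever $j \ge 2$; in the base case $j = 1$ one instead has $\omega(\alpha\jm1) = 0$ together with $\omega'(\alpha\jm1) = (1 - \cA)\phi'(0) < 0$ directly. In either case I obtain the two facts $\omega(\alpha\jm1) \le 0$ and $\omega'(\alpha\jm1) < 0$, which will power the remaining cases.

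For case (\ref{Wstage1(b)}), let $\alphastar$ minimize $\omega$ over the compact interval $[\alpha\jm1,\alpha_j]$. Because $\omega'(\alpha\jm1) < 0$, $\omega$ strictly decreases on a right neighborhood of $\alpha\jm1$, so the minimum is not attained at $\alpha\jm1$ and in fact $\omega(\alphastar) < \omega(\alpha\jm1) \le 0$. The same observation rules out $\alphastar = \alpha_j$: that would force $\omega(\alpha_j) < \omega(\alpha\jm1)$, contradicting (\ref{Wstage1(b)}). Hence $\alphastar$ is interior and Fermat's rule yields $\omega'(\alphastar) = 0$. For case (\ref{Wstage1(c)}), continuity of $\omega'$ together with $\omega'(\alpha\jm1) < 0 \le \omega'(\alpha_j)$ allows me to apply the intermediate-value theorem and pick the \emph{least} $\alphastar \in (\alpha\jm1,\alpha_j]$ for which $\omega'(\alphastar) = 0$. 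By minimality $\omega' < 0$ throughout $[\alpha\jm1,\alphastar)$, so $\omega$ decreases strictly across this subinterval and $\omega(\alphastar) < \omega(\alpha\jm1) \le 0$. In both cases $\alphastar$ meets both Wolfe conditions and lies in $[\alpha\jm1,\alpha_j]$, as required.

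The main obstacle I anticipate is the degenerate case $j = 1$, where $\alpha\jm1 = 0$ and $\omega(\alpha\jm1) = 0$ rather than strictly negative; the whole chain of reasoning must route through $\omega'(0) = (1-\cA)\phi'(0) < 0$ to push $\omega$ strictly below zero on a right neighborhood of $0$ before any strict-decrease conclusion becomes available. A secondary issue in case (\ref{Wstage1(b)}) is ensuring the minimizer does not slide to the right endpoint $\alpha_j$, and this is again settled by the same strict initial decrease combined with the hypothesis $\omega(\alpha_j) \ge \omega(\alpha\jm1)$.
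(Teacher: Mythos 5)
Your proof is correct. It rests on the same key characterization the paper uses (following Mor\'e and Sorensen): a point $\alphastar$ with $\omega(\alphastar)\le 0$ and $\omega'(\alphastar)=0$ satisfies both Wolfe conditions, since $\phi'(\alphastar)=\cA\phi'(0)$ gives $\abs{\phi'(\alphastar)}=\cA\abs{\phi'(0)}<\cW\abs{\phi'(0)}$. Where you diverge is in how such a point is produced. The paper, in both Case~2 and Case~3, introduces $\alphabar=\sup\tSet{\alpha\in[\alpha_{j-1},\alpha_j]\st \omega\le 0 \text{ on } [\alpha_{j-1},\alpha]}$, applies the mean-value theorem on $[\alpha_{j-1},\alphabar]$ to find a point of nonnegative slope, and then the intermediate-value theorem on $\omega'$ to find a zero inside the region where $\omega\le 0$. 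You instead handle (\ref{Wstage1(b)}) by taking the global minimizer of $\omega$ on the compact interval, ruling out both endpoints via $\omega'(\alpha_{j-1})<0$ and the hypothesis $\omega(\alpha_j)\ge\omega(\alpha_{j-1})$, and invoking Fermat's rule; and you handle (\ref{Wstage1(c)}) by taking the least zero of $\omega'$ in $(\alpha_{j-1},\alpha_j]$ and integrating the negative derivative to get $\omega(\alphastar)<0$. Your treatment of the $j=1$ base case ($\omega(\alpha_0)=0$ but $\omega'(0)=(1-\cA)\phi'(0)<0$) is explicit, which the paper leaves implicit. The trade-off: your argument is arguably cleaner and closer to the Mor\'e--Sorensen ``negative minimizer'' observation, but it leans essentially on differentiability of $\omega$ (Fermat's rule, sign propagation of $\omega'$), whereas the paper's sup-plus-mean-value construction is deliberately written so that the same template carries over, via Lemma~\ref{qWlem}, to the nondifferentiable quasi-Wolfe analogue in Proposition~\ref{propUC:qWstage1}, where minimizers need not be critical points.
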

\begin{proof}
Observe that $\alpha\jm1$ must satisfy none of the
conditions (\ref{Wstage1(a)})--(\ref{Wstage1(c)}), otherwise $j$ would not
be the least index.
This implies that
$\omega(\alpha\jm1)<\omega(\alpha_{j-2})<\cdot\cdot\cdot<\omega(\alpha_0)=0$
from (\ref{Wstage1(b)}), and $\omega'(\alpha\jm1) < 0$ from (\ref{Wstage1(c)}).

\myparagraph{Case 1}
%-------------------
If (\ref{Wstage1(a)}) is true, the proposition is true trivially.

\myparagraph{Case 2}
%-------------------
If (\ref{Wstage1(b)}) is true, let $\alphabar
= \sup\tSet{\alpha\in[\alpha_{j-1}$, $\alpha_j]\st\omega(\beta)\le 0$ for
all $\beta \in [\alpha_{j-1},\alpha]}$.  If $\alphabar = \alpha_j$, then
$\omega (\alphabar) = \omega(\alpha_j)\ge \omega(\alpha_{j-1})$; if
$\alphabar < \alpha_j$, then by the continuity of $\omega$, $\omega
(\alphabar) = 0 > \omega(\alpha_{j-1})$.  From the mean-value theorem there
must exist an $\alphahat\in (\alpha_{j-1},\alphabar)$ such that
$\omega'(\alphahat) = \big(\omega(\alphabar)
- \omega(\alpha_{j-1})\big)/(\alphabar - \alpha_{j-1}) > 0$.  The function
$\omega(\alpha)$ is continuously differentiable with $\omega'(\alpha\jm1) <
0$ and $\omega'(\alphahat) > 0$. The intermediate-value theorem then
implies that there must exist a step $\alphastar\in
[\alpha_{j-1},\alphahat]$ such that $\omega'(\alphastar)=0$. As
$\omega(\alphastar)\le 0$, $\alphastar$ is a Wolfe step.

%% If (\ref{Wstage1(b)}) is true, let $\alphabar
%% = \sup\tSet{\alpha\in[\alpha_{j-1},\alpha_j]\st\omega(\beta)\le 0$ for all
%% $\beta \in [\alpha_{j-1},\alpha]}$. If $\alphabar = \alpha_j$, then $\omega
%% (\alphabar) = \omega(\alpha_j)\ge \omega(\alpha_{j-1})$; if $\alphabar
%% < \alpha_j$, then by the continuity of $\omega$, $\omega (\alphabar) = 0
%% > \omega(\alpha_{j-1})$.  In either case, by the mean-value theorem, there
%% must exist a step $\alphastar\in [\alpha_{j-1},\alphabar]$ such that
%% $\omega'(\alphastar)=0$. As $\omega(\alphastar)\le 0$, $\alphastar$ is a
%% Wolfe step.

\myparagraph{Case 3}
%-------------------
Finally, consider the case where (\ref{Wstage1(c)}) is true.  If
$\omega(\alpha)<0$ for all $[\alpha_{j-1},\alpha_j]$, then, as
$\omega'(\alpha\jm1) <0$ and $\omega'(\alpha_j)\ge 0$, the continuity of
$\omega'$ and the intermediate-value theorem imply that there exists a step
$\alphastar\in [\alpha_{j-1},\alpha_j]$ such that
$\omega'(\alphastar)=0$. As $\omega(\alphastar) < 0$, $\alphastar$ is a
Wolfe step. Otherwise, if there exists some $\alpha\in
[\alpha_{j-1},\alpha_j]$ such that $\omega(\alpha) \ge 0$, let $\alphabar
= \sup\tSet{\alpha\in[\alpha_{j-1}$, $\alpha_j]\st\omega(\beta)\le 0$ for
all $\beta \in [\alpha_{j-1},\alpha]}$.  The continuity of $\omega$ implies
that $\omega(\alphabar) = 0$. The same argument used in Case~2 may be used
to show that there must exist an $\alphahat\in (\alpha_{j-1},\alphabar)$
such that $\omega'(\alphahat)>0$ and an $\alphastar\in
[\alpha_{j-1},\alphahat]$ such that $\omega'(\alphastar)=0$ with
$\omega(\alphastar)\le 0$.
%% Finally, consider the case where (\ref{Wstage1(c)}) is true.  If
%% $\omega(\alpha)<0$ for all $[\alpha_{j-1},\alpha_j]$, then, since
%% $\omega'(\alpha_j)\ge 0$, the continuity of $\omega'$ implies that there
%% exists a step $\alphastar\in [\alpha_{j-1},\alpha_j]$ such that
%% $\omega'(\alphastar)=0$. As $\omega(\alphastar) < 0$, $\alphastar$ is a
%% Wolfe step.  If there exists some $\alpha\in [\alpha_{j-1},\alpha_j]$ such
%% that $\omega(\alpha) \ge 0$, let $\alphabar
%% = \sup\tSet{\alpha\in[\alpha_{j-1},\alpha_j]\st\omega(\beta)\le 0$ for all
%% $\beta \in [\alpha_{j-1},\alpha]}$. By the continuity of $\omega$, $\omega
%% (\alphabar) = 0$. Since $\omega'(\alpha_{j-1})<0$ and $\omega(\alphabar) =
%% 0 > \omega(\alpha_{j-1})$, by the mean-value theorem, there must exist a
%% step $\alphastar\in [\alpha_{j-1},\alphabar]$ such that
%% $\omega'(\alphastar)=0$.  As $\omega(\alphastar)\le 0$, $\alphastar$ is a
%% Wolfe step.
\end{proof}

\medskip
\noindent
Note that the converse result is not true, e.g., there may be a Wolfe step
in the interval $[0,\alpha_1]$ even though none of the stage-one conditions
are satisfied for $j = 1$.  The behavior of $\omega(\alpha)$ is unknown at
any $\alpha \in (0,\alpha_1)$.

\medskip
If the first step $\alpha_1$ is not a Wolfe step, successively larger steps
are computed until either one of the stage-one conditions is satisfied or
$j$ is such that $\alpha_j = \alpha\submax$.  In practice, $\alpha\submax$
is an upper bound imposed on the step and the search is terminated if the
bound is exceeded during the stage-one iterations.  If a given $\alpha_j$
does not satisfy the stage-one conditions then
$\omega(\alpha_j)<\omega(\alpha\jm1)<\cdot\cdot\cdot<\omega(\alpha_0)=0$.
If the algorithm reaches $\alpha_{j\submax} = \alpha\submax$ and none of the
stage-one conditions have been satisfied, it terminates with
$\alpha_{j\submax}$, which is an Armijo step with the least computed
function value.

Proposition~\ref{propUC:Wstage1} implies that if one of the stage-one
conditions is satisfied at iteration $j$, then the interval
$[\alpha\jm1,\alpha_j]$ must contain a Wolfe step. At this point the line
search terminates successfully if the stage-one condition
(\ref{Wstage1(a)}) is satisfied, or moves on to the second stage.  The
computations associated with the second stage are based on the following
result.

\begin{proposition}\label{propUC:Wstage2}
Let $\phi$ and $\omega$ be defined as in {\rm Proposition~\ref{propUC:Wstage1}}.
Assume there exist distinct points $\alphaLow$ and $\alphaUpp$ such that
\begin{enumerate}[\bf(a)]
\item\label{Wstage2(a)}
  $\omega(\alphaLow)\le 0$;

\item%\label{Wstage2(b)}
  $\omega(\alphaLow) \le \omega(\alphaUpp)$; and

\item\label{Wstage2(c)}
  $\omega'(\alphaLow)(\alphaUpp - \alphaLow) < 0$.
\end{enumerate}
Then there exists a Wolfe step $\alphastar \in \setI$, where $\setI$ is the
interval defined with endpoints $\alphaLow$ and $\alphaUpp$.
% Let $\setI$ be an interval with distinct endpoints $\alphaLow$ and
% $\alphaUpp$ in arbitrary order.
% Let $\phi$ and $\omega$ be continuously
% differentiable univariate functions such that $\omega(\alpha)
% = \phi(\alpha) - \big(\phi(0) + \alpha\cA \phi'(0) \big)$ with $0< \cA <
% \half$.  Assume that $\omega(\alphaLow)\le 0$.
% If $\alphaLow$ and $\alphaUpp$ satisfy the following conditions
% (collectively called the stage-two conditions):
% \begin{enumerate}[\bf(a)]
%
%% \item%\label{Wstage2(1)}
%%   $\omega(\alphaLow) \le \omega(\alphaUpp)$; and
%
% \item%\label{Wstage2(2)}
%   $\omega'(\alphaLow)(\alphaUpp - \alphaLow) < 0$,
% \end{enumerate}
% then there exists a Wolfe step $\alphastar \in \setI$.
\end{proposition}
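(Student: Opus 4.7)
The plan is to mirror Case~2 of the proof of Proposition~\ref{propUC:Wstage1}, but with $\alpha\jm1$ replaced by $\alpha\low$ and $\alpha_j$ replaced by $\alpha\high$, while carefully handling the two possible orderings of the endpoints. Recall that any $\alphastar$ with $\omega(\alphastar)\le 0$ and $\omega'(\alphastar)=0$ is a Wolfe step: the first inequality is just the Armijo condition, and the second gives $\phi'(\alphastar) = \cA\phi'(0)$, so $\mod{\phi'(\alphastar)} = \cA\mod{\phi'(0)} < \cW\mod{\phi'(0)}$ since $\cA < \cW$. So the goal reduces to locating such an $\alphastar$ inside $\setI$.

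First I would dispose of the direction ambiguity by treating the case $\alpha\low < \alpha\high$ in detail; the reversed case follows by the same reasoning applied with endpoints swapped and all inequalities on $\omega'$ flipped (hypothesis (c) guarantees $\omega'(\alpha\low)$ has sign opposite to $\alpha\high - \alpha\low$, so in this case $\omega'(\alpha\low) < 0$). Next, following the Case~2 construction, define
\[
   \alphabar = \sup\tSet{\alpha\in[\alpha\low,\alpha\high]\st\omega(\beta)\le 0 \text{ for all } \beta\in[\alpha\low,\alpha]}.
\]
This is well-defined because $\omega(\alpha\low)\le 0$ and $\omega$ is continuous. Two subcases then arise: if $\alphabar = \alpha\high$, hypothesis (b) gives $\omega(\alphabar) = \omega(\alpha\high) \ge \omega(\alpha\low)$; if $\alphabar < \alpha\high$, continuity of $\omega$ forces $\omega(\alphabar) = 0 \ge \omega(\alpha\low)$. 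In either subcase $\omega(\alphabar) \ge \omega(\alpha\low)$ and $\omega(\beta)\le 0$ on $[\alpha\low,\alphabar]$.

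Now I would apply the mean-value theorem to $\omega$ on $[\alpha\low,\alphabar]$ to obtain $\alphahat\in(\alpha\low,\alphabar)$ with
\[
   \omega'(\alphahat) = \frac{\omega(\alphabar) - \omega(\alpha\low)}{\alphabar - \alpha\low} \ge 0.
\]
Combined with $\omega'(\alpha\low) < 0$, the intermediate-value theorem applied to the continuous function $\omega'$ on $[\alpha\low,\alphahat]$ produces $\alphastar\in[\alpha\low,\alphahat]$ with $\omega'(\alphastar) = 0$. Since $\alphastar\in[\alpha\low,\alphabar]$, we have $\omega(\alphastar) \le 0$, and as noted above this makes $\alphastar$ a Wolfe step lying in $\setI$.

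The main obstacle I anticipate is simply the bookkeeping for the reversed orientation $\alpha\low > \alpha\high$: the supremum in the definition of $\alphabar$ becomes an infimum over $[\alpha\high,\alpha\low]$, the MVT quotient has denominator $\alpha\low - \alphabar > 0$ producing $\omega'(\alphahat) \le 0$, and the IVT is then applied to $\omega'$ between $\alphahat$ and $\alpha\low$ where $\omega'(\alpha\low) > 0$. Apart from these sign flips the structure is identical, so the cleanest write-up is probably to state the argument for $\alpha\low<\alpha\high$ and remark that the other case is symmetric.
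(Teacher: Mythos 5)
Your proof is correct and follows essentially the route the paper intends: the paper's own proof simply defers to Case~2 of Proposition~\ref{propUC:Wstage1} and to its quasi-Wolfe analogue (Proposition~\ref{propUC:qWstage2}), and your supremum/mean-value/intermediate-value construction is exactly that argument specialized to the differentiable case, with the reversed orientation handled by sign flips rather than the reflection $\omegatilde(\alpha)=\omega(\alphaLow+\alphaUpp-\alpha)$ used in the paper's Proposition~\ref{propUC:qWstage2}. The only point worth making explicit is that hypothesis (c) (e.g.\ $\omega'(\alphaLow)<0$ when $\alphaLow<\alphaUpp$) also guarantees $\alphabar>\alphaLow$, so the mean-value theorem is indeed applied on a nondegenerate interval.
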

\begin{proof}
The proof is similar to that of Proposition~\ref{propUC:Wstage1}, and is a
special case of the proof of Proposition~\ref{propUC:qWstage2}.
\end{proof}

\noindent
The conditions (\ref{Wstage2(a)})--(\ref{Wstage2(c)}) of
Proposition~\ref{propUC:Wstage2} are referred to collectively as the
\emph{stage-two conditions}.  The subscripts associated with the points
$\alphaLow$ and $\alphaUpp$ serve to emphasize the fact that
$\omega(\alphaLow)\le \omega(\alphaUpp)$.  It is not necessarily the case
that $\alphaLow < \alphaUpp$.

\medskip
\noindent
Algorithm~\ref{alg:schematic-Wolfe} gives a schematic outline of a Wolfe
line search.  The calculations required for a Wolfe line search may be
organized into two ``functions'' associated with the stage-one and
stage-two conditions.  If the first stage finds an interval that contains a
Wolfe step, the first-stage function labels the endpoints $\alphaLow$ and
$\alphaUpp$ based on relative magnitudes of $\omega(\alpha\jm1)$ and
$\omega(\alpha_j)$, and calls the stage-two function \StageTwo($\alphaLow,
\alphaUpp$).  The second-stage function interpolates the endpoints to
calculate a best-guess step, $\alpha\new$, in the interval.  The
second-stage function is called recursively using $\alpha\new$ and an
existing endpoint, labeling them so that the stage-two conditions hold for
each call.  This is repeated until $\alpha\new$ is a Wolfe step.  In
practice, it rarely takes more than one or two interpolations to find a
Wolfe step.
% Line numbers added by PEG to help compare Wolfe with qWolfe
\begin{algorithm}
\caption{\label{alg:schematic-Wolfe} Schematic outline of a Wolfe line search.\algstrut}
\begin{algorithmic}[1]%[0] for no numbers every line
  \Function{\Wolfe}{$\alpha$\Astrutu}
\State   \textbf{restriction:} $\alpha > 0$;
\State   \textbf{constants:} $\cA \in (0,\half)$, $\cW \in (\cA,1)$, $\gamma_e > 1$,  $\alpha\submax \in (0,+\infty)$; \Astrutu
\State   $\alpha \gets \min\{ \alpha,\alpha\submax\}$; \agap $\alpha\old \gets 0$;
         \While{$\alpha$ is not a Wolfe step  $\AND \alpha \ne \alpha\submax$}
            \If{$\omega(\alpha) \ge \omega(\alpha\old)$}
\State         $\alpha \gets$ \StageTwo($\alpha\old, \alpha$); break;
            \ElsIf{$\omega'(\alpha) \ge 0$}
\State         $\alpha \gets$ \StageTwo($\alpha, \alpha\old$); break;
            \Else
\State         $\alpha\old \gets \alpha$; \agap
               $\alpha \gets \min\Set{ \gamma_e \alpha, \alpha\submax }$; \Comment{Increase $\alpha$ towards $\alpha\submax$}
            \EndIf
         \EndWhile
\State   \Return{$\alpha$};
  \EndFunction
\end{algorithmic}
\begin{algorithmic}[1]%[0] for no numbers every line
  \Function{\StageTwo}{$\alphaLow, \alphaUpp$}
\State   \textbf{restriction:} $\omega(\alphaLow) \le\omega(\alphaUpp)$;
\State   Choose $\alpha\new$ in the interior of the interval defined by $\alphaLow$ and $\alphaUpp$;
         \If{$\alpha\new$ is a Wolfe step}
\State      \Return{$\alpha\new$};
        \ElsIf{$\omega(\alpha\new) \ge \omega(\alphaLow)$}
\State      \Return{\StageTwo($\alphaLow, \alpha\new$)};
        \ElsIf{$\omega'(\alpha\new)(\alphaUpp - \alphaLow) < 0$}
\State      \Return{\StageTwo($\alpha\new, \alphaUpp$)};
        \Else
\State      \Return{\StageTwo($\alpha\new, \alphaLow$)};
        \EndIf
  \EndFunction
\end{algorithmic}
\end{algorithm}

A practical implementation of a Wolfe line search is very complex. There
are many ways to interpolate to obtain a new point in the second stage. The
use of finite precision imposes the need for some sort of safeguarding
during interpolation and gives rise to a whole host of issues, including
how to handle cases when the function or step length are changing by a
value near or less than machine precision. See, e.g., Brent~\cite{Bre73},
Hager~\cite{Hag89}, Ghosh and Hager~\cite{GhoH90}, and Mor\'e and Thuente
\cite{MorT94} for more details.

%\newpage

\section{The Quasi-Wolfe Search}\label{sec:qWolfe-step}
%-------------------------------
As projected-search methods perform a search on the piecewise continuously
differentiable function $f\big(\ProjF(x_k + \alpha p_k)\big)$, it is not
possible for such methods to use a conventional Wolfe line search.  In this
section we consider a new step type, called a \emph{quasi-Wolfe step}, that
is designed to extend the benefits of a Wolfe line search to
projected-search methods.

\subsection{The quasi-Wolfe step} % \label{sec:quasi-Wolfe-step}
%-----------------------------------------------------------------
Performing a search on the univariate function
\[
\f_k(\alpha) = f\big(x_k(\alpha)\big) = f\big(\ProjF(x_k + \alpha p_k)\big),
\]
instead of $\phi_k(\alpha) = f(x_k + \alpha p_k)$, is a substantially more
difficult task because $\f_k$ is only piecewise continuously
differentiable, with a finite number of jump discontinuities in the
derivative (see Section~\ref{subsec:derivatives} below).
Propositions~\ref{propUC:Wstage1} and \ref{propUC:Wstage2}, established in
the preceding section, cannot be used to guarantee a Wolfe step in the
nondifferentiable case because they use the mean-value theorem and require
the line-search function to be differentiable.

In the following discussion, the suffix $k$ is omitted if the iteration
index is not relevant to the discussion.  The definition of a quasi-Wolfe
step involves the left and right derivatives $\fM'(\alpha)$ and
$\fP'(\alpha)$ of $\f$ at $\alpha$, which are defined as
\[
  \fM'(\alpha) = \lim_{\beta \to \alphaM} \f'(\beta) \wordss{and}
  \fP'(\alpha) = \lim_{\beta \to \alphaP} \f'(\beta).
\]

%\medskip
\begin{definition}
Let $\cA$ and $\cW$ be constant scalars such that $0<\cA<\cW<1$.  A step
$\alpha>0$ is called a \emph{quasi-Wolfe step} if it satisfies the
quasi-Armijo condition
\begin{description}
 \item[(\qW1)] $\f(\alpha) \le \f(0) + \alpha\cAd\fP'(0)$,
\end{description}
and at least one of the following conditions:
\begin{description}
 \item[(\qW2)] $\abs{\fM'(\alpha)} \le \cWd \abs{\fP'(0)}$;

 \item[(\qW3)] $\abs{\fP'(\alpha)} \le \cWd \abs{\fP'(0)}$;

 \item[(\qW4)] $\f$ is not differentiable at $\alpha$ and $\fM'(\alpha) \le
   0 \le \fP'(\alpha)$.

\end{description}
\end{definition}

Figure~\ref{figqwConditions} depicts three examples of a kink point
satisfying the quasi-Wolfe conditions.

\begin{figure}[htb]
\begin{center}
\includegraphics[width=4.2cm,height=2.5cm]{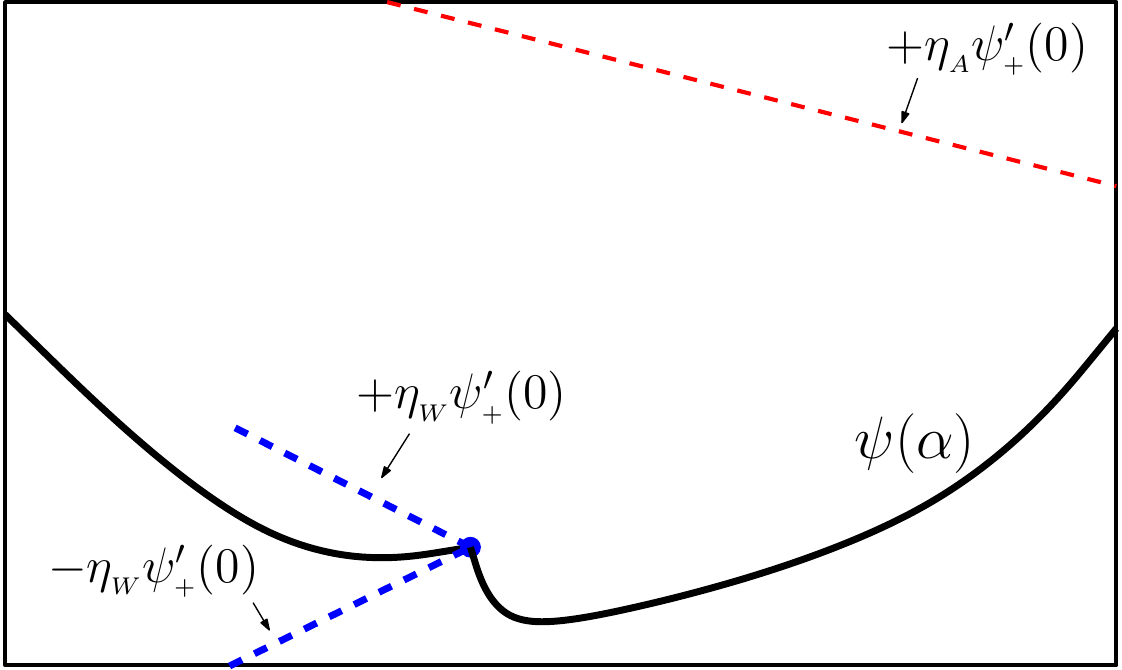}\sgap
\includegraphics[width=4.2cm,height=2.5cm]{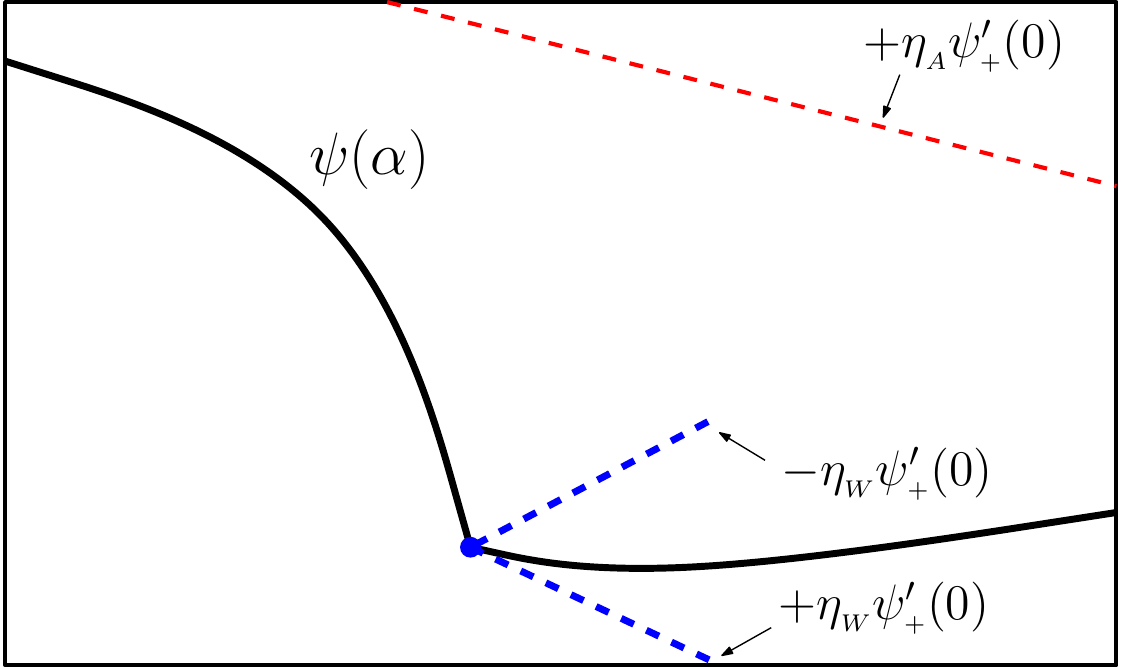}\sgap
\includegraphics[width=4.2cm,height=2.5cm]{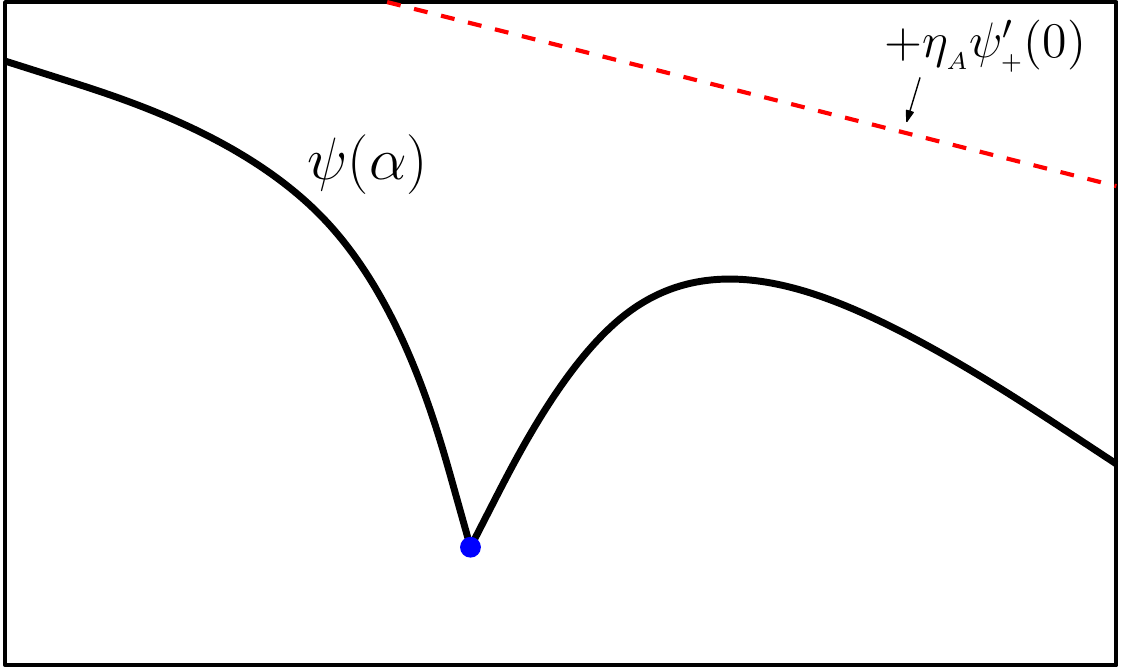}
\parbox[t]{.925\textwidth}{
\caption{\label{figqwConditions}%
\small Three examples of a kink point satisfying the quasi-Wolfe
    conditions.  The left, center and right figures depict kink points
    satisfying conditions {\bf(\qW2)}, {\bf(\qW3)} and {\bf(\qW4)}
    respectively. The slope of each dashed line is marked.}}
\end{center}
\end{figure}

\smallskip
The properties of the new search are characterized by extending the
framework for the differentiable case. In particular, the discussion makes
extensive use of the auxiliary function
\begin{equation} \label{eq:omega-defined}
  \omega     (\alpha) = \f (\alpha) - \big(\f(0) + \alpha\cAd \fP'(0) \big),    \wordss{with}
  \omega'_\pm(\alpha) = \f'_\pm(\alpha) - \cAd \fP'(0).
\end{equation}

The following lemma is used to establish the propositions below.

\begin{lemma}\label{qWlem}
%------------
Let $a$, $b \in \Re$ be such that $0 \le a < b$, and assume that $\theta$
is a univariate, continuous, piecewise continuously differentiable function
with a finite number of jump discontinuities in the derivative.
\begin{enumerate}[\bf(a)]
\item\label{qWlem(a)} If $\thetaP'(a) \le 0$ and $\theta(a) \le \theta(b)$,
then there exists an $x \in (a,b)$ such that
\[ %\begin{equation} \label{eq:(a)}
  \thetaM'(x) \le 0 \le \thetaP'(x).
\] %\end{equation}

\item\label{qWlem(b)}  If $\thetaP'(a) < 0$ and
$\thetaM'(b) > 0$ then there exists an $x \in (a,b)$ such that
\[ %\begin{equation} \label{eq:(b)}
  \thetaM'(x) \le 0 \le \thetaP'(x).
\] %\end{equation}
\end{enumerate}
% elw: lemma is for generic $\theta$ with no reference to $f$
If $\theta$ is differentiable at $x$ then the inequalities in the
conclusions of parts {\rm(\ref{qWlem(a)})} and {\rm(\ref{qWlem(b)})} hold
as equalities.
%\endproof
\end{lemma}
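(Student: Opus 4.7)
The overall strategy is to locate an interior point $x^* \in (a,b)$ that is a local minimizer of $\theta$ on $[a,b]$; at any such interior minimizer the one-sided derivatives automatically satisfy $\theta'_-(x^*) \le 0 \le \theta'_+(x^*)$ directly from the definitions of $\theta'_-$ and $\theta'_+$ as limits of difference quotients (the numerators are nonnegative in the appropriate direction because $x^*$ is a local minimum), and when $\theta$ happens to be differentiable at $x^*$ the two one-sided derivatives coincide and must therefore equal zero, yielding the final equality statement in the lemma.

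For part (b) the approach is clean. Because $\theta$ is piecewise continuously differentiable with only finitely many jumps in $\theta'$, the hypothesis $\theta'_+(a) < 0$ guarantees a right-neighborhood of $a$ on which $\theta$ is $C^1$ with strictly negative derivative, so $\theta(a+\delta) < \theta(a)$ for all sufficiently small $\delta > 0$. Symmetrically, $\theta'_-(b) > 0$ produces $\theta(b-\delta) < \theta(b)$ for small $\delta$. By compactness $\theta$ attains its minimum on $[a,b]$; these two inequalities force that minimum strictly below both $\theta(a)$ and $\theta(b)$, so any global minimizer $x^*$ must lie in the open interior $(a,b)$, and the conclusion follows from the one-sided derivative argument above.

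For part (a) the same machinery applies but the weaker hypothesis $\theta'_+(a) \le 0$ demands a case split. When $\theta'_+(a) < 0$, the local-decrease argument from part (b) combined with $\theta(a) \le \theta(b)$ shows that $\min_{[a,b]} \theta < \theta(a)$, so again the minimum is attained at some $x^* \in (a,b)$. When $\theta'_+(a) = 0$, I would take $x^*$ to be the leftmost point of the set on which $\theta$ attains its minimum over $[a,b]$: if $x^* \in (a,b)$ we are done, and otherwise we exploit the piecewise $C^1$ structure on the initial smooth piece issuing from $a$, together with the global inequality $\theta(a) \le \theta(b)$, to pass to a subinterval on which the conclusion is evident or on which a one-sided critical point lies in the interior.

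The main obstacle will be the degenerate endpoint case $\theta'_+(a) = 0$ in part (a), because a direct mean-value argument of the kind used in Proposition~\ref{propUC:Wstage1} is no longer at hand across kinks and cannot by itself preclude $\theta$ being monotone on some subinterval emanating from $a$. The decisive tool I expect to rely on is the finiteness of the kink set, which guarantees that on some right neighborhood of $a$ the behavior of $\theta$ is controlled by a single continuous derivative, allowing the classical mean-value/intermediate-value reasoning on each smooth piece to be stitched together to produce the required interior point.
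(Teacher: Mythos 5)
Your minimizer-based argument is correct and complete for part (b) and for part (a) in the subcase $\thetaP'(a)<0$: compactness gives a global minimizer, the strict one-sided decrease at the relevant endpoint(s) (plus $\theta(a)\le\theta(b)$ in part (a)) forces the minimizer into the open interval, and at an interior minimizer the one-sided derivatives bracket zero. (One small point to make explicit: the paper defines $\thetaP'$, $\thetaM'$ as limits of $\theta'$, not as difference-quotient limits; for a continuous, piecewise-$C^1$ function with finitely many kinks the two notions agree, by the mean-value theorem on each smooth piece, and your local-minimum inequality then transfers to the paper's definition.) This is a genuinely different route from the paper's proof, which instead takes $x=\inf\{\,y\in(a,b): \thetaP'(y)>0\,\}$ (the first point where the derivative changes sign) and, in the subcase of (a) where $\thetaP'\le 0$ throughout $(a,b)$, uses monotonicity across the kinks together with $\theta(a)\le\theta(b)$ and Rolle's theorem on the first smooth piece.

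The genuine gap is exactly the subcase you flag and postpone: part (a) with $\thetaP'(a)=0$. Your sketch (``pass to a subinterval on which the conclusion is evident or on which a one-sided critical point lies in the interior'') cannot be completed, because the statement as written is false in that borderline case: take $\theta(t)=(t-a)^2$ on $[a,b]$, so $\thetaP'(a)=0\le 0$ and $\theta(a)\le\theta(b)$, yet $\thetaM'(x)=2(x-a)>0$ for every $x\in(a,b)$, so no admissible $x$ exists. The paper's own argument is loose at the same spot: in its Case 2 the point $x=\inf\{\,y\in(a,b):\thetaP'(y)>0\,\}$ is asserted to lie in $(a,b)$, which fails when the infimum equals $a$, as in this example; the lemma is, however, only ever invoked in the paper with a strict inequality ($\omegaP'<0$ at the left endpoint, as in Propositions~\ref{propUC:qWstage1} and~\ref{propUC:qWstage2}), and under that strict hypothesis both your interior-minimizer argument and the paper's first-sign-change argument go through. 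So the honest resolution is to prove (a) under $\thetaP'(a)<0$ (which your part-(b)-style argument already does and which suffices for every application), rather than to try to close the $\thetaP'(a)=0$ case.
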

\begin{proof}
  For part~(\ref{qWlem(a)}), let $a = s_0 < s_1 < s_2 < \cdots < s_t <
  s_{t+1} = b$, where $s_1$, $s_2$, \dots, $s_t$ represent all the points
  in $(a,b)$ at which $\theta$ is nondifferentiable. First, suppose that
  $\thetaP'(y)\le 0$ for all $y\in (a,b)$.  Then $\theta$ is
  continuously differentiable and nonincreasing within each subinterval
  $[s_j,s_{j+1}]$ for $j=0$, 1, \dots, $t$.  It follows that
  $\theta(a)\ge\theta(s_1)\ge\cdots\ge\theta(s_t)\ge\theta (b)$.  By
  assumption, this is true only when $\theta (a)=\theta (b)$, which implies
  that $\theta(a) = \theta(s_1)$. Thus, by Rolle's Theorem, there exists an
  $x\in (a,s_1)\subset [a,b]$ such that $\theta'(x)=\theta_\pm'(x)=0$.  Now
  suppose there is a $y\in(a,b)$ such that $\thetaP'(y)>0$, and let $x=\inf
  \Set{ y\in (a,b)\st\thetaP'(y)>0 }$.  Then $x\in (a,b)$, $\thetaP'(x)\ge
  0$, and $\thetaM'(x)=\lim_{y\to x\superminus}\thetaP'(y)\le 0$.  For
  part~(\ref{qWlem(b)}), let $x=\inf\Set{ y\in (a,b) \st \thetaP'(y)>0 }$.
  Then $x\in (a,b)$, $\thetaP'(x)\ge 0$, and $\thetaM'(x)=\lim_{y\to
    x\supminus}\thetaP'(x)\le 0$.
\end{proof}

The next result establishes conditions on $f$ and $\FR$ that guarantee the
existence of a quasi-Wolfe step at each iteration.

\begin{proposition}
%------------------
Let $f$ be a scalar-valued continuously differentiable function defined on
$\FR=\Set{x\in \Re^n \st \bL \le x\le \bU }$.  Assume that $x_0\in\FR$ is
chosen such that the level set $\setL\big(f(x_0)\big)$ is bounded, and
assume that $\tSet{ p_k }$ is a sequence of descent directions. If
$\cA$ and $\cW$ are fixed scalars such that $0< \cAd < \cWd < 1$, then at
every iteration $k$ either there exists an $\alpha\up{k}\L > 0$ and an
interval $(\alpha\up{k}\L,\alpha\up{k}\U)$ such that every
$\alpha\in(\alpha\up{k}\L,\alpha\up{k}\U)$ is a quasi-Wolfe step, or there
exists a quasi-Wolfe step that satisfies the condition {\bf(\qW4)}.
\end{proposition}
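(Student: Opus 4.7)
The plan is to mimic the first-stage argument of Proposition \ref{propUC:Wstage1}, with the intermediate- and mean-value theorems (which fail along the piecewise-linear path) replaced by the one-sided derivative version in Lemma \ref{qWlem}. Set $\f(\alpha) = f\big(\ProjF(x_k + \alpha p_k)\big)$ and
\[
  \omega(\alpha) = \f(\alpha) - \big(\f(0) + \alpha\cAd\fP'(0)\big),
\]
so that condition {\bf(\qW1)} is precisely $\omega(\alpha)\le 0$. Since $p_k$ is a descent direction, $\fP'(0) < 0$, so $\omega(0) = 0$ and $\omegaP'(0) = (1-\cA)\fP'(0) < 0$. Note also that $\f$ is continuous and piecewise continuously differentiable, with kinks only at the finitely many $\alpha > 0$ at which some $[x_k+\alpha p_k]_i$ crosses a bound.

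First I would produce a finite right endpoint at which $\omega$ returns to a nonnegative value. If $\omega(\alpha) < 0$ for every $\alpha > 0$, then each $x_k(\alpha)$ lies in the compact level set $\setL(f(x_0))$, so $\f$ is bounded below; but $\omega<0$ forces $\f(\alpha) < \f(0) + \alpha\cA\fP'(0) \to -\infty$, a contradiction. Hence $\alphabar := \inf\tSet{\alpha > 0 : \omega(\alpha) \ge 0}$ is finite, and continuity of $\omega$ gives $\omega(\alphabar) = 0$ with $\omega < 0$ on $(0,\alphabar)$. Applying Lemma \ref{qWlem}(\ref{qWlem(a)}) to $\theta = \omega$ on $[0,\alphabar]$ (whose hypotheses $\omegaP'(0) \le 0$ and $\omega(0) \le \omega(\alphabar)$ both hold) then produces $x \in (0,\alphabar)$ with
\[
  \fM'(x) \;\le\; \cA\fP'(0) \;\le\; \fP'(x),
\]
and condition {\bf(\qW1)} at $x$ is automatic because $\omega(x) < 0$.

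The rest of the argument is a case split on the point $x$. If $\f$ is differentiable at $x$, the final sentence of Lemma \ref{qWlem} gives $\f'(x) = \cA\fP'(0)$, so $|\f'(x)| = \cA|\fP'(0)| < \cW|\fP'(0)|$, delivering both {\bf(\qW2)} and {\bf(\qW3)}; continuity of $\f'$ and $\omega$ on the smooth piece containing $x$ then supplies an open subinterval of quasi-Wolfe steps, which plays the role of $(\alpha\up{k}\L,\alpha\up{k}\U)$. If $\f$ is nondifferentiable at $x$ and $\fP'(x) \ge 0$, then $\fM'(x) < 0 \le \fP'(x)$, and $x$ itself is a quasi-Wolfe step via {\bf(\qW4)}.

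The delicate case---and the main obstacle I anticipate---is when $\f$ is nondifferentiable at $x$ but $\fP'(x) < 0$. The key observation is that $\fP'(x) \in [\cA\fP'(0),\,0)$, so $|\fP'(x)| \le \cA|\fP'(0)| < \cW|\fP'(0)|$ and {\bf(\qW3)} is already satisfied at $x$. Because the kinks of $\f$ are isolated, $\f$ is $C^1$ on some $(x,\,x+\delta)$ with $\f'(\alpha) \to \fP'(x)$ as $\alpha \downarrow x$; shrinking $\delta$ if needed and using $\omega(x) < 0$ together with continuity of $\omega$, one obtains a subinterval $(x,\,x+\delta)$ on which $|\f'(\alpha)| < \cW|\fP'(0)|$ and $\omega(\alpha) < 0$ hold simultaneously. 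This interval supplies the required $(\alpha\up{k}\L,\alpha\up{k}\U)$. The conceptual subtlety is that the $x$ coming out of Lemma \ref{qWlem} need not itself satisfy any classical Wolfe slope test on its left; one must exploit the one-sided bound together with isolation of the kinks to transport the quasi-Wolfe certificate onto a neighboring smooth piece.
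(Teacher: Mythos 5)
Your argument is correct and follows essentially the same route as the paper's proof: the same auxiliary function $\omega$, the same least-positive-root construction (your $\alphabar$ is the paper's $\beta$, obtained by an infimum argument rather than an explicit intermediate-value step), the same application of part~(\ref{qWlem(a)}) of Lemma~\ref{qWlem} to produce $\xi$ with $\fM'(\xi)\le\cAd\fP'(0)\le\fP'(\xi)$, and the same case split on the sign of $\fP'(\xi)$, with {\bf(\qW4)} covering the positive case. Your treatment is slightly more detailed than the paper's, which compresses your differentiable and nondifferentiable-with-$\fP'(\xi)<0$ cases into one appeal to ``piecewise continuity of $\f'_+$'' to obtain the interval of quasi-Wolfe steps.
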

\begin{proof}
We omit the suffix $k$ and write $\f(\alpha) = f\big(\ProjF(x + \alpha
p)\big)$.  First, it will be shown that there exists a positive scalar
$\sigma$ such that the function $\omega$ of (\ref{eq:omega-defined})
satisfies $\omega(\alpha)<0$ for all $\alpha\in (0,\sigma)$. As
$\fP'(0)=\Grad f(x)\T p < 0$ and $\cAd < 1$, it must hold that
\[
    \omegaP'(0) = (1 - \cAd)\fP'(0)<0,
\]
in which case there must be a scalar $\sigma$ ($\sigma > 0$) such that
$\omega(\alpha)<0$ for all $\alpha\in (0,\sigma)$.  It follows that there
exists a $\sigma_1\in (0,\sigma)$ such that $\omega(\sigma_1) < 0$.

From the compactness of the level set $\setL\big(f(x_0)\big)$, $\f(\alpha)$ is
bounded below by some constant $\f\low$, i.e., $\f(\alpha)\ge \f\low$ for
all $\alpha\in [0,\infty)$.  As $\f(0) + \alpha\cAd \fP'(0)\to -\infty$ as
$\alpha\to\infty$, there must exist a positive $\sigma_2$ such that
$\f(0)+\sigma_2\drop \cAd\fP'(0)=\f\low$, and we have
\[
  \omega(\sigma_2) = \f(\sigma_2) - \f(0) - \sigma_2\drop\cAd\fP'(0)
                   = \f(\sigma_2) - \f\low  \ge 0.
\]
Given scalars $\sigma_1$ and $\sigma_2$ ($0\le\sigma_1 <\sigma_2$) such
that $\omega(\sigma_1)<0$ and $\omega(\sigma_2)\ge 0$, the intermediate-value
theorem states that there must exist at least one positive $\alpha$ such
that $\omega(\alpha)=0$. Let $\beta$ denote the least positive root of
$\omega(\alpha)=0$, then $\omega(\alpha)<0$ for all $\alpha\in (0,\beta)$.
As $\omega(0)=0$, $\omega(\beta)=0$, and $\omegaP'(0)<0$, by
Lemma~\ref{qWlem} (\ref{qWlem(a)}), there exists an $\xi\in (0,\beta)$ such
that
\[
  \omegaM'(\xi)\le 0\le \omegaP'(\xi),    \words{or, equivalently,}
  \fM'(\xi) \le \cAd\fP'(0) \le \fP'(\xi).
\]

By construction, $\xi\in (0,\beta)$, which implies that $\omega(\xi)\le 0$, or
equivalently, $\xi$ satisfies the quasi-Armijo condition (\qW1).
If $\fP'(\xi)\le 0$, then the inequality $\cA < \cW$
implies that $\xi$ is a quasi-Wolfe step that satisfies the derivative
condition (\qW3).  By the piecewise continuity of $\f'_+(\alpha)$, there
exists an $\alpha\L>0$ and an interval $(\alpha\L,\alpha\U)$ such that
every $\alpha\in (\alpha\L,\alpha\U)$ is a quasi-Wolfe step.  Otherwise, if
$\fP'(\xi) > 0$, then $\xi$ is a quasi-Wolfe step that satisfies the
condition (\qW4).
\end{proof}

The following result is analogous to Proposition~\ref{propUC:Wstage1} and
motivates the first stage of a quasi-Wolfe search.

\begin{proposition}\label{propUC:qWstage1}
%------------------
Let $\tSet{\alpha_i}_{i=0}^\infty$ be a strictly monotonically increasing
sequence with $\alpha_0 = 0$. Let $\f$ be a continuous
piecewise-differentiable univariate function whose derivative has a finite
number of jump discontinuities. Assume that $\fP'(0)<0$ and define
$\omega(\alpha) = \f(\alpha) - \big(\f(0) + \alpha\cAd \fP'(0) \big)$ with
$0< \cA < 1$. If there exists a least bounded index $j$ such that at least
one of the following ``stage-one'' conditions is true:
\begin{enumerate}[\bf(a)]
\item\label{qWstage1(a)} $\alpha_j$ is a quasi-Wolfe step;

\item\label{qWstage1(b)} $\omega(\alpha_j) \ge \omega(\alpha\jm1)$; or

\item\label{qWstage1(c)} $\omegaM'(\alpha_j) \ge 0$,
\end{enumerate}
then there exists a quasi-Wolfe step $\alphastar\in [\alpha\jm1,\alpha_j]$.
\end{proposition}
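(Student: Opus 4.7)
The plan is to follow the template of Proposition~\ref{propUC:Wstage1}, replacing each appeal to the mean- or intermediate-value theorem by Lemma~\ref{qWlem}. As in the preceding existence proposition, the crux is that once a point $\xi$ is found with $\omegaM'(\xi) \le 0 \le \omegaP'(\xi)$ and $\omega(\xi) \le 0$, a routine case split on the sign of $\fP'(\xi)$ (together with $\cAd < \cWd$) shows that $\xi$ automatically satisfies $(\qW{1})$ and either $(\qW{3})$ or $(\qW{4})$, so $\xi$ is a quasi-Wolfe step.

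I would first extract consequences of $j$ being the least index satisfying any of (a)--(c). Iterating the failure of (b) yields $\omega(\alpha_{j-1}) \le \omega(\alpha_{j-2}) \le \cdots \le \omega(\alpha_0) = 0$, and failure of (c) gives $\omegaM'(\alpha_{j-1}) < 0$ for $j \ge 2$ (for $j = 1$ the analogous inequality $\omegaP'(0) = (1-\cAd)\fP'(0) < 0$ plays the same role). The new wrinkle is to upgrade this to $\omegaP'(\alpha_{j-1}) < 0$: if $\omegaP'(\alpha_{j-1}) \ge 0$, i.e., $\fP'(\alpha_{j-1}) \ge \cAd\fP'(0)$, then either $\fP'(\alpha_{j-1}) \in [\cAd\fP'(0),0]$, in which case $|\fP'(\alpha_{j-1})| \le \cAd|\fP'(0)| < \cWd|\fP'(0)|$ so $(\qW{3})$ holds, or $\fP'(\alpha_{j-1}) > 0$, in which case $\fM'(\alpha_{j-1}) < 0 < \fP'(\alpha_{j-1})$ forces a kink at $\alpha_{j-1}$ satisfying $(\qW{4})$; in either case $\alpha_{j-1}$ would itself be a quasi-Wolfe step, contradicting minimality of $j$.

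Case (a) is immediate. For Case (b), I would set $\alphabar = \sup\Set{\alpha \in [\alpha_{j-1},\alpha_j] \st \omega(\beta) \le 0 \words{for all} \beta \in [\alpha_{j-1},\alpha]}$. Continuity gives $\alphabar > \alpha_{j-1}$, and either $\alphabar = \alpha_j$ with $\omega(\alphabar) = \omega(\alpha_j) \ge \omega(\alpha_{j-1})$, or $\alphabar < \alpha_j$ with $\omega(\alphabar) = 0 \ge \omega(\alpha_{j-1})$. Combined with $\omegaP'(\alpha_{j-1}) < 0$, Lemma~\ref{qWlem}(\ref{qWlem(a)}) on $[\alpha_{j-1},\alphabar]$ yields $\xi \in (\alpha_{j-1},\alphabar)$ with $\omegaM'(\xi) \le 0 \le \omegaP'(\xi)$; since $\omega(\xi) \le 0$ by construction of $\alphabar$, the opening observation makes $\xi$ a quasi-Wolfe step in $[\alpha_{j-1},\alpha_j]$.

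For Case (c), if $\omega$ attains a nonnegative value on $[\alpha_{j-1},\alpha_j]$ the Case (b) construction applies verbatim with the same $\alphabar$. Otherwise $\omega < 0$ on the whole interval, and I would first strengthen $\omegaM'(\alpha_j) \ge 0$ to $\omegaM'(\alpha_j) > 0$: equality would give $|\fM'(\alpha_j)| = \cAd|\fP'(0)| < \cWd|\fP'(0)|$, so $\alpha_j$ would satisfy $(\qW{1})$ and $(\qW{2})$ and reduce to Case (a). With $\omegaP'(\alpha_{j-1}) < 0$ and $\omegaM'(\alpha_j) > 0$, Lemma~\ref{qWlem}(\ref{qWlem(b)}) then delivers the required $\xi$. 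The main obstacle throughout is the careful one-sided-derivative bookkeeping: the two parts of Lemma~\ref{qWlem} require different strictness in their hypotheses, and the strict inequality $\omegaP'(\alpha_{j-1}) < 0$---automatic in the smooth Wolfe setting---must be obtained by the explicit quasi-Wolfe-step contradiction sketched above.
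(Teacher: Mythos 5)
Your proposal follows the paper's own argument essentially step for step: the same auxiliary function $\omega$, the same preliminary strict inequality $\omegaP'(\alpha_{j-1})<0$ derived from the minimality of $j$ (you obtain it as a contrapositive---if $\omegaP'(\alpha_{j-1})\ge 0$ then $\alpha_{j-1}$ would satisfy (\qW1) together with (\qW3) or (\qW4)---which is logically the same as the paper's direct argument that (\qW3) and (\qW4) must fail at $\alpha_{j-1}$), the same $\alphabar$ construction with Lemma~\ref{qWlem}(\ref{qWlem(a)}) in Case~(b), and the same use of Lemma~\ref{qWlem}(\ref{qWlem(b)}) after disposing of the possibility $\omegaM'(\alpha_j)=0$ in Case~(c). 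The conversion of the sandwich $\omegaM'(\xi)\le 0\le\omegaP'(\xi)$ with $\omega(\xi)\le 0$ into (\qW1) plus (\qW3) or (\qW4) is also exactly the paper's.

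The one step that does not work as stated is the split in Case~(c). You divide into ``$\omega$ attains a nonnegative value'' versus ``$\omega<0$ on the whole interval'' and claim the Case~(b) construction applies \emph{verbatim} with the same $\alphabar$ in the first branch. But consider the sub-case in which $\omega\le 0$ on all of $[\alpha_{j-1},\alpha_j]$, $\omega$ vanishes at some interior point, and $\omega(\alpha_j)<\omega(\alpha_{j-1})$ (all compatible with $\omegaM'(\alpha_j)\ge 0$). Then $\alphabar=\alpha_j$, and the inequality $\omega(\alphabar)\ge\omega(\alpha_{j-1})$ required by Lemma~\ref{qWlem}(\ref{qWlem(a)}) was, in Case~(b), supplied by condition~(b) itself, which is not available in Case~(c); so the verbatim appeal fails there. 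The remedy is exactly the paper's split: handle ``$\omega\le 0$ throughout'' (whether or not it touches zero) by the Lemma~\ref{qWlem}(\ref{qWlem(b)}) argument you already give, and reserve the $\alphabar$ construction for the branch ``$\omega>0$ somewhere,'' where continuity forces $\alphabar<\alpha_j$ and $\omega(\alphabar)=0\ge\omega(\alpha_{j-1})$. (Equivalently, in your first branch replace $\alphabar$ by the first zero of $\omega$ in $(\alpha_{j-1},\alpha_j]$.) With that adjustment your proof is complete and coincides with the paper's.
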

\begin{proof}
Observe that $\alpha\jm1$ must satisfy none of the conditions
(\ref{qWstage1(a)})--(\ref{qWstage1(c)}), otherwise $j$ would not be the
least index.  This implies that
$\omega(\alpha\jm1)<\omega(\alpha_{j-2})<\cdot\cdot\cdot<\omega(\alpha_0)=0$
from (\ref{qWstage1(b)}), and $\omegaM'(\alpha\jm1) < 0$ from
(\ref{qWstage1(c)}).

The first step is to show that
\begin{equation} \label{qalphaprop1}
  \omegaP'(\alpha\jm1) < 0.
\end{equation}
If $\omega'(\alpha\jm1)$ exists, then
$\omegaP'(\alpha\jm1)=\omegaM'(\alpha\jm1)<0$.  If $\omega'(\alpha\jm1)$
does not exist, then (\ref{qWstage1(c)}) implies that $\omegaM'(\alpha\jm1)
= \fM'(\alpha\jm1) - \cAd \fP'(0) < 0$, in which case $\fM'(\alpha\jm1)<0$
because $\fP'(0)<0$ by assumption.  As (\qW4) cannot hold at $\alpha\jm1$,
it follows that $\fP'(\alpha\jm1)< 0$. Now, if (\qW3) does not hold at
$\alpha\jm1$ then $\fP'(\alpha\jm1) < \cWd \fP'(0) < \cAd \fP'(0)$.  Thus,
$\omegaP'(\alpha\jm1) = \fP'(\alpha\jm1) - \cAd\fP'(0) <0$.  The inequality
(\ref{qalphaprop1}) is used in the proofs that follow.

\myparagraph{Case 1}
%-------------------
If (\ref{qWstage1(a)}) is true, the proposition holds trivially.

\myparagraph{Case 2}
%-------------------
If (\ref{qWstage1(b)}) is true, let $\alphabar
= \sup\tSet{\alpha\in[\alpha_{j-1}$, $\alpha_j]\st\omega(\beta)\le 0$ for
all $\beta \in [\alpha_{j-1},\alpha]}$.  If $\alphabar = \alpha_j$, then
$\omega (\alphabar) = \omega(\alpha_j)\ge \omega(\alpha_{j-1})$; if
$\alphabar < \alpha_j$, then by the continuity of $\omega$, $\omega
(\alphabar) = 0 > \omega(\alpha_{j-1})$.  In either case, as
$\omegaP'(\alpha\jm1) < 0$ by (\ref{qalphaprop1}), part~(\ref{qWlem(a)}) of
Lemma~\ref{qWlem} implies that there exists an $\alphastar \in
[\alpha\jm1,\alphabar]$ such that
\[
 \omegaM'(\alphastar) \le 0 \le \omegaP'(\alphastar).
\]
This implies that
\[
 \fM'(\alphastar) \le \cAd \fP'(0) \le \fP'(\alphastar).
\]
From the definition of $\alphabar$, $\alphastar$ satisfies the quasi-Armijo
condition (\qW1). As $\fM'(\alphastar) < 0$, if $\fP'(\alphastar) \ge 0$,
then $\alphastar$ is a quasi-Wolfe step by (\qW4).  Alternatively, if
$\fP'(\alphastar) < 0$, then
\[
 \cWd \fP'(0) < \cAd \fP'(0) \le \fP'(\alphastar) < 0,
\]
and again, $\alphastar$ is a quasi-Wolfe step by (\qW3).

\myparagraph{Case 3}
%-------------------
Finally, consider the case where (\ref{qWstage1(c)}) is true, i.e.,
$\omegaM'(\alpha_j) \ge 0$. By (\ref{qalphaprop1}),
$\omegaP'(\alpha\jm1)<0$.  If $\omega(\alpha) \le 0$ for all $\alpha \in
[\alpha\jm1,\alpha_j]$, then either $\omegaM'(\alpha_j) = 0$ such that
$\alpha_j$ is a quasi-Wolfe step, or part~(\ref{qWlem(b)}) of
Lemma~\ref{qWlem} establishes the existence of a step $\alphastar \in
(\alpha\jm1,\alpha_j)$ such that
\[
  \omegaM'(\alphastar) \le 0 \le \omegaP'(\alphastar),
\]
and $\alphastar$ satisfies the quasi-Armijo condition (\qW1).
Otherwise, let $\alphabar = \sup\tSet{\alpha\in[\alpha_{j-1}$,
$\alpha_j]\st\omega(\beta)\le 0$ for all $\beta \in
[\alpha_{j-1},\alpha]}$.  By the continuity of $\omega$, $\omega
(\alphabar) = 0 > \omega(\alpha_{j-1})$.  It follows from
part~(\ref{qWlem(a)}) of Lemma~\ref{qWlem} that there exists a step
$\alphastar \in [\alpha\jm1,\alphabar]$ such that
\[
  \omegaM'(\alphastar) \le 0 \le \omegaP'(\alphastar),
\]
and $\alphastar$ satisfies the quasi-Armijo condition (\qW1).
The same argument used for the preceding case shows that $\alphastar$ is a
quasi-Wolfe step.
\end{proof}

The second stage of a quasi-Wolfe search is based on the following
proposition.

\begin{proposition}\label{propUC:qWstage2}
%------------------
Let $\f$ and $\omega$ be defined as in {\rm Proposition~\ref{propUC:qWstage1}}.
Assume there exist distinct points $\alphaLow$ and $\alphaUpp$ such that
\begin{enumerate}[\bf(a)]
\item%\label{qWstage2(a)}
  $\omega(\alphaLow)\le 0$;

\item%\label{qWstage2(b)}
$\omega(\alphaLow) \le \omega(\alphaUpp)$; and

\item%\label{qWstage2(c)}
$\omegaP'(\alphaLow) < 0$ if $\alphaLow < \alphaUpp$ or
$\omegaM'(\alphaLow) > 0$ if $\alphaLow > \alphaUpp$,
\end{enumerate}
then there exists a quasi-Wolfe step $\alphastar \in \setI$, where $\setI$
is the interval defined with endpoints $\alphaLow$ and $\alphaUpp$.
\end{proposition}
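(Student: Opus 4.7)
The plan is to mirror the proof of Proposition~\ref{propUC:qWstage1}: first locate a step $\alphastar$ in $\setI$ at which the one-sided derivatives of $\omega$ straddle zero (i.e., $\omegaM'(\alphastar) \le 0 \le \omegaP'(\alphastar)$) while also maintaining $\omega(\alphastar)\le 0$, and then translate these properties into the quasi-Wolfe conditions via the identities $\omega'_\pm = \f'_\pm - \cAd\fP'(0)$. The proof splits into two cases according to which inequality of (c) applies. Consider first $\alphaLow < \alphaUpp$, so that hypothesis (c) gives $\omegaP'(\alphaLow) < 0$. If $\omega(\alpha)\le 0$ on all of $[\alphaLow,\alphaUpp]$, combining $\omegaP'(\alphaLow)<0$ with the ordering $\omega(\alphaLow)\le\omega(\alphaUpp)$ from (b) permits a direct application of Lemma~\ref{qWlem}(\ref{qWlem(a)}) on $[\alphaLow,\alphaUpp]$. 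Otherwise, following the template of Case~2 in the preceding proposition, set $\alphabar = \sup\tSet{\alpha\in[\alphaLow,\alphaUpp] \st \omega(\beta)\le 0 \text{ for all } \beta\in[\alphaLow,\alpha]}$; continuity of $\omega$ forces $\omega(\alphabar)=0\ge\omega(\alphaLow)$, and Lemma~\ref{qWlem}(\ref{qWlem(a)}) applies instead on $[\alphaLow,\alphabar]$. Either way one obtains an $\alphastar$ with the two desired properties.

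For the remaining case $\alphaLow > \alphaUpp$, condition (c) reads $\omegaM'(\alphaLow) > 0$, and the plan is to reduce to the previous case via an orientation reversal: work with $\tilde\omega(s) = \omega(\alphaLow - s)$ on $s\in[0,\alphaLow-\alphaUpp]$. The reversal interchanges the one-sided derivatives, so that $\tilde\omega'_+(0) = -\omegaM'(\alphaLow) < 0$, while (b) yields $\tilde\omega(0)\le\tilde\omega(\alphaLow-\alphaUpp)$; the argument of the previous paragraph applied to $\tilde\omega$ then produces an $s\superstar$, and $\alphastar = \alphaLow - s\superstar$ satisfies $\omegaM'(\alphastar)\le 0\le\omegaP'(\alphastar)$ together with $\omega(\alphastar)\le 0$.

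Finally, the straddling condition converts to $\fM'(\alphastar) \le \cAd\fP'(0) \le \fP'(\alphastar)$. Condition (\qW1) is immediate from $\omega(\alphastar)\le 0$. If $\fP'(\alphastar)\ge 0$, the inequality $\fM'(\alphastar)\le\cAd\fP'(0)<0$ forces $\f$ to be nondifferentiable at $\alphastar$ and (\qW4) holds; otherwise $\cWd\fP'(0) < \cAd\fP'(0) \le \fP'(\alphastar) < 0$ gives (\qW3). The main subtlety will lie in the orientation-reversal used in the second case: one must verify that the finitely many kink points of $\f$ (hence of $\omega$) transform correctly under $s\mapsto\alphaLow-s$, so that the left/right derivative outputs of Lemma~\ref{qWlem} genuinely match the straddling pattern required at $\alphastar$, and that the ``sublevel'' construction of $\alphabar$ transfers verbatim to $\tilde\omega$.
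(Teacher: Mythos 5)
Your proposal is correct and follows essentially the same route as the paper: locate a point where the one-sided derivatives of $\omega$ straddle zero via Lemma~\ref{qWlem}(\ref{qWlem(a)}) (using the $\alphabar$-sup construction when $\omega$ becomes positive), handle the case $\alphaLow > \alphaUpp$ by an orientation-reversing substitution (the paper uses $\omegatilde(\alpha)=\omega(\alphaLow+\alphaUpp-\alpha)$, which is your $\tilde\omega$ up to a shift), and then convert $\fM'(\alphastar)\le\cAd\fP'(0)\le\fP'(\alphastar)$ together with $\omega(\alphastar)\le 0$ into (\qW1) plus (\qW3) or (\qW4) exactly as in Proposition~\ref{propUC:qWstage1}. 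The only difference is cosmetic: your explicit subcase split in the $\alphaLow<\alphaUpp$ case is slightly more careful than the paper's one-line continuity claim.
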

\begin{proof}
% The proof is similar to that of Proposition~\ref{propUC:qWstage1}.
First, consider the case where $\alphaLow < \alphaUpp$.  Let $\alphabar =
\sup\Set{\alpha\in\setI\st\omega(\beta)\le 0$ for all $\beta \in
  [\alphaLow,\alpha]}$.  By the continuity of $\omega$, $\omega(\alphabar)
= 0 \ge \omega(\alphaLow)$.  It follows from part~(\ref{qWlem(a)}) of
Lemma~\ref{qWlem} that there exists a step $\alphastar \in
[\alphaLow,\alphabar]$ such that $\omega(\alphastar)\le 0$ and
\[
  \omegaM'(\alphastar) \le 0 \le \omegaP'(\alphastar).
\]
The same argument used in Proposition~\ref{propUC:qWstage1} shows that
$\alphastar$ is a quasi-Wolfe step.

For the case $\alphaLow > \alphaUpp$, let
$\omegatilde(\alpha)=\omega(\alphaLow+\alphaUpp-\alpha)$.  Then
$\omegatilde(\alphaUpp)=\omega(\alphaLow)\le 0$, and
$\omegatilde\subplus'(\alphaUpp)=-\omegaM'(\alphaLow)<0$.  Let $\alphabar =
\sup \Set{\alpha\in\setI\st\omegatilde(\beta)\le 0$ for all $\beta
  \in[\alphaUpp,\alpha]}$.  The continuity of $\omegatilde$ implies that
$\omegatilde(\alphabar)=0\ge\omegatilde(\alphaUpp)$.  It follows from
part~(\ref{qWlem(a)}) of Lemma~\ref{qWlem} that there exists a step
$\betastar \in [\alphaUpp,\alphabar]$ such that $\omegatilde(\betastar)\le
0$ and
\[
  \omegatilde\subminus'(\betastar) \le 0 \le\omegatilde\subplus'(\betastar).
\]
Let $\alphastar=\alphaLow+\alphaUpp-\betastar$, then $\alphastar\in\setI$,
$\omega(\alphastar)\le 0$ and
\[
  \omegaM'(\alphastar) = - \omegatilde\subplus'(\betastar)\le 0 \le
  -\omegatilde\subminus'(\betastar) =\omegaP'(\alphastar).
\]
It follows that $\alphastar$ is a quasi-Wolfe step.
%$\omegatilde(\alphaUpp)\le\omega(\alphaUpp) = \omegatilde(\alphaLow)$.
\end{proof}

%\section{Practical Issues}\label{sec:practical}
%-------------------------
Although the implementation of a quasi-Wolfe search is similar to that of a
Wolfe line search, there are a number of crucial practical issues
associated with the potential nondifferentiability of the line-search
function.  These issues include the definition of the derivatives of the
line-search function and the computation of a new estimate of a quasi-Wolfe
step.

\subsection{Derivatives of the search function}\label{subsec:derivatives}
%----------------------------------------------
The purpose of this section is to establish expressions for the left- and
right-derivatives of the search function $\f(\alpha) = f\big(x(\alpha)\big)$,
where $x(\alpha)$ is the vector $\ProjF(x+\alpha p)$ with components
\[ %\begin{equation}\label{eq:x}
x_i(\alpha)
 = \begin{cases}
     \bL_i            & \lwords{if} x_i + \alpha p_i < \bL_i, \\
     \bU_i            & \lwords{if} x_i + \alpha p_i > \bU_i, \\
     x_i + \alpha p_i & \lwords{if} \bL_i \le x_i + \alpha p_i \le \bU_i.
   \end{cases}
\] %\end{equation}
First, we consider the derivatives of $x(\alpha)$.  Under the assumptions
that $x$ is feasible and $\alpha$ is positive,
it must hold that if $x_i+\alpha p_i < \bL_i$ then $p_i<0$, and if
$x_i+\alpha p_i > \bU_i$, then $p_i>0$.  This implies that the right
derivative of $x(\alpha)$ with respect to $\alpha$ is given by
\[ %\begin{equation}\label{eq:xrprime}
 [x\subplus'(\alpha)]_i
    = \begin{cases}
        0    & \lwords{if} x_i(\alpha) = \bL_i \words{and} p_i < 0, \\
        0    & \lwords{if} x_i(\alpha) = \bU_i \words{and} p_i > 0, \\
        p_i  & \lwords{otherwise.}
      \end{cases}
\] %\end{equation}
The vector $x\subplus'(\alpha)$ may be expressed in terms of $\proj_x(p)$,
the \emph{projected direction of $p$ at $x$}, which is defined as
\[ %\begin{equation}\label{eq:projdirection}
  \elem{\proj_x(p)}_i
    = \begin{cases}
        0   & \lwords{if} x_i = \bL_i \words{and} p_i < 0, \\
        0   & \lwords{if} x_i = \bU_i \words{and} p_i > 0, \\
        p_i & \lwords{otherwise.}
      \end{cases}
\] %\end{equation}
The vector $\proj_x(p)$ represents the projection of $p$ onto the closure
of the set of feasible directions at $x(\alpha)$. If $x(\alpha)$ is
differentiable at a point $\alpha$, then
\begin{equation}\label{eq:xprime}
  x'(\alpha)=x\subplus'(\alpha)=P_{x(\alpha)}(p).
\end{equation}

If $x(\alpha)$ is not differentiable at $\alpha$ then there must be at
least one index $i$ such that
\[
  (x_i + \alpha p_i = \bL_i \words{and} p_i < 0) \wordss{or}
  (x_i + \alpha p_i = \bU_i \words{and} p_i > 0).
\]
An $\alpha$ satisfying one of these conditions is called a \emph{kink step
  with respect to $i$} and it also must hold that
$x\subplus'(\alpha) \ne x\subminus'(\alpha)$.  In order to compute the left
derivative $x\subminus'(\alpha)$, consider the values of $x'(\beta)$ as
$\beta$ approaches $\alpha$ from below. If $\alpha$ is a kink step with
respect to $i$ then $x_i + \beta p_i$ is feasible for all $\beta$
sufficiently close to $\alpha$ and it follows from (\ref{eq:xprime}) that
$x_i'(\beta) = p_i$. If this value is combined with the components of
$x_i'(\beta)$ associated with the differentiable case, we obtain
$x\subminus'(\alpha) = P\supminus_{x(\alpha)}(p)$,
where
\[
 \elem{P\supminus_{x(\alpha)}(p)}_i
  = \begin{cases}
      p_i                        & \lwords{if} \alpha \words{is a kink step with respect to} i, \\
      \elem{P_{x(\alpha)}(p)}\id & \lwords{otherwise.}
    \end{cases}
\]

Next we consider the derivatives of the search function $\f(\alpha)$. If
$\f(\alpha)$ is differentiable at $\alpha$, then the chain rule gives
\[
 \f'(\alpha)
  = \frac{d}{d\alpha} f\big(x(\alpha)\big)
  = \Grad f\big(x(\alpha)\big){}^T \frac{d}{d\alpha}x(\alpha)
  = \Grad f\big(x(\alpha)\big){}^T x'(\alpha).
\]
Using this expression with the expression \eqref{eq:xprime} for
$x'(\alpha)$ gives
\[
  \f'(\alpha) = \Grad f\big(x(\alpha)\big){}\T P_{x(\alpha)}(p).
\]

If $\f(\alpha)$ is not differentiable at $\alpha$, then $\alpha$ is a kink
step and $\fM'(\alpha) \ne \fP'(\alpha)$. For any $\alpha$, $\lim_{\beta
  \to \alphaP} x'(\beta)=x\subplus'(\alpha)$, and $\lim_{\beta \to \alphaM}
x'(\beta)=x\subminus(\alpha)$.  It follows that the right- and
left-derivatives of $\fP(\alpha)$ with respect to $\alpha$ are given by
\[
  \fP'(\alpha) = \Grad f\big(x(\alpha)\big){}\T x\subplus'(\alpha)
  = \Grad f\big(x(\alpha)\big){}\T P_{x(\alpha)}(p),
\]
and
\[
  \fM'(\alpha) = \Grad f\big(x(\alpha)\big){}\T x\subminus(\alpha)
  = \Grad f\big(x(\alpha)\big){}\T P\supminus_{x(\alpha)}(p).
\]
These expressions imply that there is a jump of magnitude $\Mod{p_i \Grad_i
  f\big(x(\alpha)\big)}$ in the derivative of $\f$ at a kink step with
respect to $i$.

\subsection{Computing a quasi-Wolfe step}%\label{subsec:estimates}
%----------------------------------------
As in the Wolfe line search discussed in Section~\ref{sec:Wolfe-step}, a
quasi-Wolfe search may be regarded as having two stages. The first stage
begins with an initial step length $\alpha_0$ and continues with steps of
increasing magnitude until one of three things happens: an acceptable step
length is found; an interval that contains a quasi-Wolfe step is found; or
the step is considered to be unbounded.  In practice, the search is
terminated if the computed step length exceeds a preassigned upper bound
$\alpha\submax$ during the first-stage iterations.  If the search
terminates at $\alpha\submax$ without finding an interval containing a
quasi-Wolfe step, then every step computed up to that point satisfies the
quasi-Armijo condition.

If the first stage terminates with a bounded step, the second stage
repeatedly calls a function \StageTwo($\alpha\low$, $\alpha\high$), where
\begin{enumerate}[\bf(a)]
  \item the interval bounded by $\alpha\low$ and $\alpha\high$ contains a
        quasi-Wolfe step;

  \item among all the step lengths generated so far, $\alpha\low$ gives the
        least value of $\omega$;

  \item $\alpha\high$ is chosen so that $\omegaP'(\alpha\low)<0$ if
        $\alpha\low<\alpha\high$, or $\omegaM'(\alpha\low)>0$ if
        $\alpha\low>\alpha\high$.
\end{enumerate}

\noindent
It must be emphasized that in practice, the stage-two calculations are not
implemented as a recursive procedure.  The recursive structure depicted in
Algorithm~\ref{alg:schematic-Wolfe} is illustrative and reflects the fact
that the intervals defined by $\alpha\low$ and $\alpha\high$ form
a \emph{nested sequence}.  If $\setI_0$ is the interval resulting from
stage-one, the computations of stage-two generate a sequence of intervals
$\{ \setI_j \}$ and a sequence of points $\{ \alpha\upj\low \}$ such that
$\alpha\up{j} \in \setI_j$, each $\setI_j$ contains a quasi-Wolfe step, and
$\setI_j\subset \setI_{j-1}$.  The intervals $\setI_j$ form a nested
sequence of ``intervals of uncertainty''.
Algorithm~\ref{alg:schematic-qWolfe} gives a schematic outline of a
quasi-Wolfe search.
% Line numbers added by PEG to help compare Wolfe with qWolfe
\begin{algorithm}
\caption{\label{alg:schematic-qWolfe} Schematic outline of a quasi-Wolfe search.\algstrut}
\begin{algorithmic}[1]%[0] for no numbers every line
  \Function{\quasiWolfe}{$\alpha$\Astrutu}
\State   \textbf{restriction:} $\alpha > 0$;
\State   \textbf{constants:} $\cA \in (0,\half)$, $\cW \in (\cA,1)$, $\gamma_e > 1$, $\alpha\submax \in (0,+\infty)$; \Astrutu
\State   $\alpha \gets \min\{ \alpha,\alpha\submax\}$; \agap $\alpha\old \gets 0$;
         \While{$\alpha$ is not a quasi-Wolfe step  $\AND \alpha \ne \alpha\submax$}
           \If{$\omega(\alpha) \ge \omega(\alpha\old)$}
\State           $\alpha \gets$ \StageTwo($\alpha\old,\alpha$); break;
           \ElsIf{$\omegaM'(\alpha) \ge 0$}
\State          $\alpha \gets$ \StageTwo($\alpha,\alpha\old$); break;
           \Else
\State       $\alpha\old \gets \alpha$; \agap
               $\alpha \gets \min\Set{ \gamma_e \alpha, \alpha\submax }$; \Comment{Increase $\alpha$ towards $\alpha\submax$}
           \EndIf
         \EndWhile
\State   \Return{$\alpha$};
  \EndFunction
\end{algorithmic}
\begin{algorithmic}[1]%[0] for no numbers every line
  \Function{\StageTwo}{$\alphaLow, \alphaUpp$}
\State  \textbf{restriction:} $\omega(\alphaLow) \le\omega(\alphaUpp)$;
\State  Choose $\alpha\new$ in the interior of the interval defined by $\alphaLow$ and $\alphaUpp$;
        \If{$\alpha\new$ is a quasi-Wolfe step}
\State       \Return{$\alpha\new$};
        \ElsIf{$\omega(\alpha\new) \ge \omega(\alphaLow)$}
\State         \Return{\StageTwo($\alphaLow, \alpha\new$)};
        \ElsIf{$\omegaP'(\alpha\new) < 0$ $\AND \alphaLow < \alphaUpp$}
\State         \Return{\StageTwo($\alpha\new, \alphaUpp$)};
        \ElsIf{$\omegaM'(\alpha\new) > 0$ $\AND \alphaLow > \alphaUpp$}
\State        \Return{\StageTwo($\alpha\new, \alphaUpp$)};
        \Else
\State         \Return{\StageTwo($\alpha\new, \alphaLow$)};
        \EndIf
  \EndFunction
\end{algorithmic}
\end{algorithm}

A major difference between a Wolfe and a quasi-Wolfe search concerns how
interpolation is used to find new steps in the second stage.  Each time
\StageTwo($\alpha\low$, $\alpha\high$) is invoked, a new trial step
$\alpha\new$ is generated. In the differentiable case, $\alpha\new$ is
usually obtained by polynomial interpolation using the value of $\phi$ and
its derivatives at $\alpha\low$ and $\alpha\high$. If the univariate search
function is only piecewise differentiable, there may be kink points between
$\alpha\low$ and $\alpha\high$, in which case a conventional interpolation
approach may not provide a good estimate of a quasi-Wolfe step.  One
strategy to speed convergence in this situation is to search for the kink
step (if it exists) between $\alpha\low$ and $\alpha\high$ that is closest
to $\alpha\low$. This approach is justified by the following argument. If a
new point $\alpha\new$ is not a quasi-Wolfe step, then based on
Proposition~\ref{propUC:qWstage2}, the end points $\alpha\low$ and
$\alpha\high$ are updated to $\alpha\low$ and $\alpha\new$ in two cases:
\begin{description}

\item[Case (1).] $\omega(\alpha\new)\ge \omega(\alpha\low)$;

\item[Case (2).] $\omegaP'(\alpha\new)< 0$ if $\alpha\high<\alpha\low$, or
$\omegaM'(\alpha\new)>0$ if $\alpha\high>\alpha\low$.
\end{description}
In these cases, the new interval bounded by $\alpha\low$ and $\alpha\new$
will not contain a kink step.  In the remaining case:
\begin{description}
\item[Case (3).] $\omegaP'(\alpha\new)\ge 0$ if $\alpha\high<\alpha\low$, or
$\omegaM'(\alpha\new)\le 0$ if $\alpha\high>\alpha\low$,
\end{description}
the new interval will be bounded by $\alpha\high$ and $\alpha\new$, but may
contain kink points. However, the new interval must contain at least one
fewer kink point.

The search for the kink points proceeds as follows.  The first time the
function \StageTwo($\alpha\low$, $\alpha\high$) is invoked, the kink steps
are computed in $O(n)$ floating-point operations (flops) from
\[%\begin{equation}\label{eqkink}
\kappa_i
= \begin{cases}
        (\bU_i - x_i)/p_i  & \lwords{if} p_i > 0, \\
        (\bL_i - x_i)/p_i  & \lwords{if} p_i < 0, \\
        \infty             & \lwords{if} p_i = 0.
 \end{cases}
\]%\end{equation}
As the interval bounded by $\alpha\low$ and $\alpha\high$ contains a
quasi-Wolfe step, only the kink steps within that interval need be stored.
These steps are then sorted in decreasing order within $O(n\log n)$ flops
using a heapsort algorithm (see, e.g., Williams~\cite{Wil64},
Knuth~\cite[Section~5.2.3]{Knu97}).  The kink step closest to $\alpha\low$,
say $\kappa\superstar_1$, is either the smallest or the largest kink step
within the interval of uncertainty, depending on whether $\alpha\low$ is
smaller or greater than $\alpha\high$.  Once $\kappa\superstar_1$ has been
found, the search for $\kappa\superstar_l$ ($l>1$) is made towards
$\alpha\low$ starting at the kink step $\kappa_{l-1}\superstar$ from the
preceding iteration. To prevent the iterations from lingering at
\textbf{Case~(3)} for too long, an upper limit is imposed on the number of
consecutive kink steps as trial steps. If this limit is reached, a new
trial step is generated by bisection.

% Once an interval is found that brackets a quasi-Wolfe step, only the kink
% steps within that interval are needed and a dynamic array may be used to
% save storage.
Once all the kinks in the interval of uncertainty have been
eliminated, conventional polynomial interpolation may be used to generate a
new step length.  However, some care is necessary to choose the appropriate
left or right derivative for use in the interpolation (see
Section~\ref{subsec:derivatives}).

\medskip
If there is just one kink step in the interval of uncertainty, $\alpha\new$
is set to be that kink step. As the number of kink steps in an interval
increases, it becomes more difficult to strike a balance between making
effective use of the knowledge they exist and efficiency; for example, if
an interval contains $10^6$ kink steps, it is not practical to jump to the
middle one and repeat on each subinterval.

\section{Projected-Search Methods}\label{sec:Proj-search-methods}
%---------------------------------

In this section, we focus on two types of projected-search methods that use
the quasi-Wolfe line search. These methods may be broadly categorized as
active-set methods and interior-point methods.

\subsection{Projected-search active-set methods}\label{subsec:Active-set-methods}
%-----------------------------------------------
Given an initial $x_0 \in \FR$, consider the sequence of iterates
$\tSet{x_k}$ that satisfies $x\kp1 = x_k(\alpha_k) = \ProjF(x_k + \alpha_k
p_k)$, where $\alpha_k$ is a quasi-Wolfe step, and $p_k$ is a descent
direction for $f$ at $x_k$. The search direction $p_k$ is based on the
components of a feasible descent direction $d_k$ computed in terms of a
\emph{working set} of indices at $x_k$ such that
\begin{equation}\label{eq:working-set}
\begin{aligned}
 \setW_k &= \Set{ i \st    [x_k]_i \le \bL_i + \epsilon_k \words{and} \Grad_i f(x_k) > 0 \words{or}\\
         &  \hspace{130pt} [x_k]_i \ge \bU_i - \epsilon_k \words{and} \Grad_i f(x_k) < 0 },
\end{aligned}
\end{equation}
where $\epsilon_0$ is a fixed positive parameter $\epsilon$, and
$\epsilon_k = \min \big \{\epsilon, \big\|\ProjMat\km1^T\Grad
f(x\kdm1)\big\|\big\}$ for $k\ge 1$, with $\ProjMat\km1$ the matrix of
columns of the identity matrix of order $n$ associated with the indices in
the complement of $\setW\km1$ in $\Set{ 1$, $2$, \dots, $n }$.  The matrix
$\ProjMat\kdm1\ProjMat\km1^T$ represents the projection $\projWkm1$ with
respect to the set $\setW\km1$, i.e., for any $d\in\Re^n$ it holds that
$\ProjMat\kdm1\ProjMat\km1^T d = \projWkm1(d)$, with
\[ %\begin{equation}\label{eq:projdirectionSet}
  \elem{\projWkm1(d)}_i
    = \begin{cases}
         0    & \lwords{if} i\in     \setW\km1, \\
         d_i  & \lwords{if} i\not\in \setW\km1.
      \end{cases}
\] %\end{equation}
The search direction $p_k$ is defined in terms of any direction $d_k$ such
that $d\kd = \ProjMat\kd\ProjMat_k^T d\kd$, and $\Grad f(x_k)^T d_k < 0$.
Once $d_k$ is determined, the components of $d_k$ are modified if necessary
to give a search direction $p_k$ such that $\elem{p_k}_i=
\max\{\elem{d_k}_i,0\}$ if $\elem{x_k}_i \le \bL_i+\epsilon_k$ and
$\elem{p_k}_i=\min\{\elem{d_k}_i,0\}$ if $\elem{x_k}_i \ge
\bU_i-\epsilon_k$.  This additional step guarantees convergence in the
situation where iterates approach a boundary point from the interior of the
feasible region---a phenomenon known as zigzagging or jamming (see
Bertsekas~\cite{Ber82}).  The vector $p_k$ satisfies $p\kd =
\ProjMat\kd\ProjMat_k^T p\kd$, and retains the descent property of $d_k$.
For example, if $\elem{d_k}_i \ne 0$ and $\elem{x_k}_i \le \bL_i +
\epsilon_k$, then the definition of $\setW_k$ implies that $\Grad_i f(x_k)
\le 0$.  If $\elem{p_k}_i >0$ then $\elem{p_k}_i =\elem{d_k}_i$.  Otherwise,
$\elem{d_k}_i < 0$ with $\Grad_i f(x_k)\elem{d_k}_i \ge 0$, and setting
$\elem{p_k}_i =0$ makes the directional derivative more negative.

The working set at $x_k$ is a subset of the \emph{extended active set},
which is defined as
\[
  \setA_{\epsilon_k}(x_k)
    = \Set{ i \st \elem{x_k}_i \le \bL_i + \epsilon_k \words{or} \elem{x_k}_i\ge \bU_i-\epsilon_k }.
\]
It is shown in Section \ref{sec:convergence} that, under certain
conditions, $\tSet{\epsilon_k}\to 0$, and
$\setA_{\epsilon_k}(x_k)=\setA(x_k)$ for $k$ sufficiently large, which
would imply that $p_k=d_k$ for $k$ sufficiently large.

A general projected-search method based on the proposed framework is
summarized in Algorithm~\ref{alg:projSearch-qWolfe}.
\begin{algorithm}
\caption{\label{alg:projSearch-qWolfe} A class of active-set projected-search methods}
\begin{algorithmic}[0]%[1] for numbers every line
\State   \textbf{constant:} $\epsilon>0$; \Astrutu
\State  Choose $x_0\in\FR$;

\State  Let $\epsilon_0=\epsilon$;                            \agap
           $k=0$;
        \While{not converged}
\State     Determine the working set $\setW_k$  (\ref{eq:working-set});
\State     Compute a feasible descent direction $d_k$ at $x_k$ such that
                $\elem{d_k}_i=0$ if $i\in\setW_k$;
\State     Modify $d_k$ to give a search direction $p_k$:
\State     $\elem{p_k}_i
               = \begin{cases}
                   \max  \{\elem{d_k}_i,0\} &\mbox{if $\elem{x_k}_i \le \bL_i+\epsilon_k$}, \\
                   \min\,\{\elem{d_k}_i,0\} &\mbox{if $\elem{x_k}_i \ge \bU_i-\epsilon_k$},\\
                   \elem{d_k}_i & \mbox{otherwise;}
                 \end{cases}
           $
\State     Compute a quasi-Wolfe step $\alpha_k$; \agap  $x_{k+1}= \ProjF(x_k + \alpha_k p_k)$;
\State     $\epsilon_{k+1}=\min\big\{\epsilon,\big\|\ProjMat_k^T\Grad f(x\kd)\big\|\big\}$;
\State     $k \gets k+1$;
        \EndWhile
\end{algorithmic}
\end{algorithm}
There are various choices for the direction $d_k$. For example, if $d\kd =
-\ProjMat\kd\ProjMat_k^T\Grad f(x\kd)$, then the method is a variant of
projected gradient.  Other choices include computing $d_k$ as the solution
of the subproblem
\begin{equation}  \label{eq:qNquadratic}
     \minimize{d}\sgap \Grad f(x_k)\T d + \half d\T H_k d
\bgap\subject    \sgap d_i = 0 \words{for all} i\in\setW_k,
\end{equation}
where $H_k$ a positive-definite approximation of $\Hess
f(x_k)$.
%The matrix $H_k$ is
%maintained in reduced-Hessian form and is not stored explicitly (for more
%details, see Gill and Leonard~\cite{GilL03}, Ferry~\cite{Fer11}, and Ferry
%\etal~\cite{FerGWZ20b}).
For the numerical experiments presented in Section~\ref{sec:numerical},
$d_k$ was the solution of (\ref{eq:qNquadratic}) with $H_k$ chosen as a
positive-definite limited-memory BFGS approximation of $\Hess f(x_k)$ (see
Ferry~\cite{Fer11} and Ferry \etal~\cite{FerGWZ20b}).

\subsection{Projected-search interior methods} \label{subsec:Interior-point-methods}
%---------------------------------------------
Almost all interior-point methods for problem (\ref{BC}) are based on
minimizing a sequence of unconstrained functions $M(v \given \mu)$
parameterized by a positive scalar $\mu$.  In general, the function $M(v
\given \mu)$ is either not defined or unbounded for some values of the
variables, which implies that the $v$ are subject to implicit bound
constraints during the minimization. If follows that for a given $\mu$, the
problem to be solved has the general form
\begin{gather}
     \minimize{v\in \Re^{n_v}}\mgap M(v \given \mu)
\bgap\subject             \mgap  \bL_v < v < \bU_v, \tag{IPBC}\label{IPBC}
\end{gather}
where $\bL_v$ and $\bU_v$ are fixed $n_v$-vectors of lower and upper bounds on $v$.  For
example, in the case of the log-barrier method, the function $M(v \given
\mu)$ is given by
\[
 f(x) - \sum_{j=1}^n \mu\ln\big(\bLj\big)
      - \sum_{j=1}^n \mu\ln\big(\bUj\big),
\]
so that $v = x$ and the implicit bounds are $\bL < x < \bU$. The format
(\ref{IPBC}) is quite general, and includes both primal and primal-dual
interior methods (see Section~\ref{subsec:results-interior}).

The proposed projected-search line-search method for problem (\ref{IPBC})
generates a sequence of feasible iterates $\Set{v_k}_{k=0}^\infty$ such
that $v\kp1 = \ProjFk(v_k + \alpha_k \dv_k)$, where $\dv_k$ is a descent
direction for $M(v \given \mu)$ and $\ProjFk(v)$ is the projection of $v$
onto the perturbed feasible region
\begin{equation} \label{eq:FRk-defined}
  \FR_k = \Set{ v \st  v_k - \btol(v_k - \bL_v) \le v \le v_k + \btol(\bU_v-v_k) },
\end{equation}
where $\btol$ a fixed positive scalar such that $0 < \btol < 1$. The
quantity $\btol$ may be interpreted as the ``fraction to the boundary''
parameter used in many conventional interior-point methods.

Projected-search interior-point methods have the potential of requiring
fewer iterations than a conventional interior-point method, thereby
reducing the number of times that a search direction need be computed.
Section~\ref{subsec:results-interior} gives numerical results for a
primal-dual projected-search interior method based on the method of Forsgren and
Gill~\cite{ForG98}.

\section{Convergence Analysis} \label{sec:convergence}
% -----------------------------
Next we consider the convergence properties of the two types of
projected-search methods described in
Section~\ref{sec:Proj-search-methods}.

\subsection{Convergence analysis: projected-search active-set methods} %\label{subsec:convergence-projected}
%-----------------------------------------------
In this section we consider the convergence properties of the class of
projected-search active-set methods described in
Section~\ref{subsec:Active-set-methods}.  As an introduction, we
first consider the convergence of a method with a quasi-Armijo search,
which gives a step satisfying the condition (\ref{eq:quasiArmijo}).

\begin{theorem}[Active-set projected search with a quasi-Armijo search] \label{thm:quasiArmijo}
%-----------------------------------------------------------------------
\startNewLine
Let $f$ be a scalar-valued continuously differentiable function defined on
$\FR= \Set{ x\in \Re^n \st \bL \le x \le \bU }$.  Assume that $x_0\in\FR$
is chosen such that the level set $\setL\big(f(x_0)\big)$ is bounded, and
$\tSet{ x_k }$ is defined by $x\kp1 = x_k(\alpha_k)$, where $\alpha_k$ is a
quasi-Armijo step. For an arbitrarily fixed $\epsilon>0$, define
$\epsilon_0=\epsilon$, and
\[
  \epsilon_k = \min\big\{\epsilon,\big\|\ProjMat\km1^T\Grad f(x\kdm1)\big\|\big\}
\]
for $k\ge 1$, where each $\ProjMat_k$ is a matrix with orthonormal columns
that spans the set of projected directions with respect to the working set
$\setW_k$.  If $\tSet{ p_k }$ is a sequence of
descent directions with $\norm{p_k}\le \theta$ for some constant
$\theta$ independent of $k$, $\ProjMat\kd\ProjMat_k^T p\kd = p\kd$ for all $k$, and
the components of $p_k$ satisfy $\elem{p_k}_i \ge 0$ if $\elem{x_k}_i \le
\bL_i+\epsilon_k$, and $\elem{p_k}_i \le 0$ if $\elem{x_k}_i \ge \bU_i -
\epsilon_k$, then
\[
  \lim_{k\to\infty} \Mod{\Grad f(x_k)\T p_k} = 0.
\]
\end{theorem}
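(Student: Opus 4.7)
The plan is to argue by contradiction, combining the telescoping consequence of the quasi-Armijo condition with a mean-value argument applied to the trial step immediately preceding the accepted one in the backtracking search.

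First, since $\Grad f(x_k)\T p_k<0$ and the quasi-Armijo condition gives $f(x\kp1)\le f(x_k)+\alpha_k\cAd\Grad f(x_k)\T p_k$, the sequence $\{f(x_k)\}$ is monotonically decreasing. Compactness of $\setL\big(f(x_0)\big)$ bounds it below, so $f(x_k)\to f\superstar$, and telescoping yields
\[
\sum_{k=0}^{\infty}\alpha_k\cAd\abs{\Grad f(x_k)\T p_k}\le f(x_0)-f\superstar<\infty,
\]
hence $\alpha_k\abs{\Grad f(x_k)\T p_k}\to 0$. Suppose for contradiction that $\abs{\Grad f(x_k)\T p_k}\not\to 0$; choose $\delta>0$ and a subsequence $K$ with $\abs{\Grad f(x_k)\T p_k}\ge\delta$ on $K$, so $\alpha_k\to 0$ on $K$. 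Using compactness of the level set, the uniform bound $\norm{p_k}\le\theta$, and finiteness of the possible working sets, refine $K$ so that $x_k\to x\superstar$, $p_k\to p\superstar$, and $\setW_k=\setW\superstar$ along $K$, so that $\Grad f(x\superstar)\T p\superstar\le-\delta$.

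Next, the backtracking structure $\alpha_k=\gamma\sigma^{t_k}$ with $t_k$ minimal implies that $\alpha_k\to 0$ forces $t_k\ge 1$ eventually, so $\beta_k:=\alpha_k/\sigma$ violates the quasi-Armijo test, giving
\[
f\big(x_k(\beta_k)\big)>f(x_k)+\beta_k\cAd\Grad f(x_k)\T p_k.
\]
The key simplification is that the sign conditions on $p_k$ eliminate projection clipping when $\beta$ is small relative to $\epsilon_k$: if $[p_k]_i>0$, the contrapositive of the upper-bound sign condition gives $\bU_i-[x_k]_i>\epsilon_k$, and symmetrically $[p_k]_i<0$ yields $[x_k]_i-\bL_i>\epsilon_k$; hence $x_k(\beta)=x_k+\beta p_k$ whenever $\beta\in(0,\epsilon_k/\theta]$. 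Applied to the displayed violation, the mean-value theorem supplies $\xi_k$ on the segment $[x_k,x_k+\beta_k p_k]$ with $\Grad f(\xi_k)\T p_k>\cAd\Grad f(x_k)\T p_k$. Since $\beta_k\to 0$, $\xi_k\to x\superstar$, and the limit gives $(1-\cAd)\Grad f(x\superstar)\T p\superstar\ge 0$, contradicting the strict descent at $x\superstar$.

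The main obstacle is ensuring $\beta_k\le\epsilon_k/\theta$ along the contradictory subsequence. If a sub-subsequence of $K$ has $\epsilon_k\ge\epsilon_0>0$, the conclusion follows immediately from $\beta_k\to 0$. The harder case is $\epsilon_k\to 0$ on all of $K$: by the definition of $\epsilon_k$, $\norm{\Piit\km1^T\Grad f(x\km1)}\to 0$, and since $p\km1=\Piit\km1\Piit\km1^T p\km1$ with $\norm{p\km1}\le\theta$, $\abs{\Grad f(x\km1)\T p\km1}\to 0$. I would close this case by an additional subsequence extraction that fixes both $\setW\km1$ and $\setW_k$ (only finitely many choices), combined with the update $x_k=\ProjF(x\km1+\alpha\km1 p\km1)$ and continuity of $\Grad f$, in order to propagate the vanishing of the directional derivative from iteration $k-1$ to iteration $k$ and contradict $\abs{\Grad f(x_k)\T p_k}\ge\delta$.
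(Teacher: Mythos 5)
Up through your ``easy case'' your argument is essentially the paper's stage-one argument: the telescoping consequence of quasi-Armijo, the contradiction subsequence $K$ with $\alpha_k\to 0$, the failed trial step $\beta_k=\alpha_k/\sigma$, and the use of the sign conditions to show that no component is clipped when $\beta_k\theta\le\epsilon_k$ all match (the paper uses an integral form of Taylor's theorem with dual norms where you use compactness, convergent sub-subsequences and the mean-value theorem; that difference is immaterial). The genuine gap is the case you yourself flag as the main obstacle, $\epsilon_k\to 0$ along $K$, which you only sketch, and the sketched mechanism does not work. Nothing ``propagates'' the vanishing of $\abs{\Grad f(x_{k-1})\T p_{k-1}}$ to iteration $k$: only $\alpha_k\to 0$ on $K$ is known, not $\alpha_{k-1}\to 0$, and when $\Grad f(x_{k-1})\T p_{k-1}$ is tiny the quasi-Armijo test at iteration $k-1$ is nearly vacuous, so $x_k$ can be a full step of length up to $\gamma\theta$ away from $x_{k-1}$; moreover $p_k$ is a fresh descent direction with no relation to $p_{k-1}$, and freezing $\setW_{k-1}$ and $\setW_k$ along a sub-subsequence does not make them equal to each other, so smallness of the components of $\Grad f(x_{k-1})$ off $\setW_{k-1}$ says nothing about the components of $\Grad f(x_k)$ off $\setW_k$, which are exactly what enter $\Grad f(x_k)\T p_k$. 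As written, the hard case---which is precisely the case in which clipping can force tiny accepted steps even though $\abs{\Grad f(x_k)\T p_k}\ge\delta$---remains open.

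The paper closes this case with an inequality you already invoke, but at the unhelpful index: since $p_k=\ProjMat_k\ProjMat_k^T p_k$ and $\norm{p_k}\le\theta$, one has $\abs{\Grad f(x_k)\T p_k}\le\theta\,\norm{\ProjMat_k^T\Grad f(x_k)}$ \emph{at the same iteration} $k$. Hence along any subsequence on which $\abs{\Grad f(x_k)\T p_k}\ge\delta$, the projected gradient satisfies $\norm{\ProjMat_k^T\Grad f(x_k)}\ge\delta/\theta$; the contradiction hypothesis itself prevents the projected gradient from vanishing at those iterations. Accordingly, the paper first proves the result under the hypothesis $\liminf_{k\to\infty}\norm{\ProjMat_k^T\Grad f(x_k)}>0$ (which yields $\liminf_k\epsilon_k>0$ and hence your no-clipping estimate), and then uses the displayed bound to show that any subsequence along which $\abs{\Grad f(x_k)\T p_k}$ stays bounded away from zero satisfies that hypothesis along the subsequence, so the stage-one argument can be rerun there; boundedness of the sequence $\tSet{\abs{\Grad f(x_k)\T p_k}}$ then upgrades the subsequential statement to the full limit. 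Rewriting your hard case this way closes the proof, with the caveat that one must still track the one-iteration lag in $\epsilon_k$ (it is defined from $\norm{\ProjMat_{k-1}^T\Grad f(x_{k-1})}$, so the bound at iteration $k$ directly controls $\epsilon_{k+1}$); the essential point is that the paper's inequality ties the contradiction hypothesis to the projected gradient at iteration $k$ itself, which a continuity-based transfer from iteration $k-1$ cannot do.
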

\begin{proof}
First, we show that $ \lim_{k\to\infty} \Mod{\Grad f(x_k)\T p_k} = 0$ if
$\liminf_{k\to\infty}\big\|\ProjMat_k^T\Grad f(x\kd)\big\|\ne 0$.
%% We show that
%% \[
%%   \lim_{k\to\infty} \mod{\Grad f(x_k)\T p_k} = 0.
%% \]
Observe that the quasi-Armijo condition (\ref{eq:quasiArmijo}) implies that
$\tSet{f(x_k) }$ is a strictly decreasing sequence. As the set
$\setL\big(f(x_0)\big)$ is bounded, it follows that $\tSet{f(x_k)
}$ converges, with
\[
  0 =  \lim_{k\to\infty} f(x_k)-f(x\kp1)
   \ge \lim_{k\to\infty} \alpha_k \cA \mod{\Grad f(x_k)\T p_k}=0.
\]
The proof is by contradiction. Suppose that $\mod{\Grad f(x_k)\T
p_k} \not\to 0$ as $k\to\infty$, then there must exist some $\epsilonbar>0$
such that $ \mod{\Grad f(x_k)\T p_k} > \epsilonbar$ infinitely often.  Let
$\setG = \Set{ k \st \mod{\Grad f(x_k)\T p_k} > \epsilonbar }$, then it
must be that $\alpha_k \to 0$ for $k\in\setG$.  For all $k\in\setG$, define
the step $\beta_k = \alpha_k/\sigma$. The hypothesis that
$\liminf_{k\to\infty}\big\|\ProjMat_k^T\Grad f(x\kd)\big\|\ne 0$ implies
$\liminf_{k\to\infty} \epsilon_k > 0$.
As $\tSet{\norm{p_k} }$ is uniformly
bounded by $\theta$ and $\liminf_{k\to\infty} \epsilon_k > 0$, there exists
$\kbar$ such that each component of $\beta_k p_k$ satisfies
$\mod{\elem{\beta_k p_k}_i} < \epsilon_k$ for all $k\ge \kbar$ in $\setG$.
The assumptions on components of $p_k$ imply that $\elem{p_k}_i>0$ only if
$\bU_i-\elem{x_k}_i > \epsilon_k$, and $\elem{p_k}_i<0$ only if
$\elem{x_k}_i-\bL_i>\epsilon_k$. It follows that for all $k\ge \kbar$ in
$\setG$, $\bL_i\le\elem{x_k+\beta_kp_k}_i\le \bU_i$ and $\ProjF(x_k
+ \beta_k p_k) = x_k + \beta_k p_k$.

Let $\setGbar$ denote the indices $k\ge \kbar$ of iterations at which a
reduction in the initial step length was necessary, i.e., $\setGbar = \Set{
k \st t_k > 0, \sgap k\in \setG, \sgap k\ge \kbar }$.  Since $\alpha_k$
converges to zero, $\setGbar$ must be an infinite set.  By definition,
\[
  f(x_k+\beta_k p_k)
   =  f\big(\ProjF(x_k + \beta_k p_k)\big)
   >  f(x_k) +\beta_k \cA \Grad f(x_k)\T p_k, \words{for all} k\in\setGbar.
\]
Adding $-\beta_k\Grad f(x_k)\T p_k$ to both sides and rearranging gives
\begin{align}
 f(x_k + \beta_k p_k) - f(x_k) - \beta_k\Grad f(x_k)\T p_k
    &> -\beta_k (1 - \cA)\Grad f(x_k)\T p_k                         \notag \\
    &>  \beta_k (1 - \cA)\epsilonbar, \words{for all} k\in\setGbar. \label{eq:ineq1}
\end{align}
The Taylor expansion of $f(x_k+\beta_k p_k)$ gives
\begin{equation}\label{eq:eqn1}
  f(x_k+\beta_k p_k)-f(x_k)-\beta_k \Grad f(x_k)\T p_k
    = \beta_k\int_0^1\big(\Grad f(x_k+\tau \beta_k p_k)-\Grad f(x_k)\big){}^T p_k\, d\tau.
\end{equation}
If $\dualnorm{\cdot}$ denotes the norm dual to $\norm{\cdot}$,
i.e., $\dualnorm{x}=\max_{v\neq 0}\mod{x\T v}/\norm{v}$, then
\[
  \Mod{\big(\Grad f(x_k+\tau\beta_k p_k)-\Grad f(x_k)\big){}^T p_k}
     \le \dualnorm{\Grad f(x_k + \tau\beta_k p_k)-\Grad f(x_k)}\norm{p_k}.
\]
If this inequality is substituted in (\ref{eq:eqn1}), it then follows from (\ref{eq:ineq1}) that
\begin{align*}
 (1 - \cA)\epsilonbar
   &<   \int_0^1 \big(\Grad f(x_k+\tau\beta_k p_k) - \Grad f(x_k)\big){}^T p_k \,d\tau\\
   &\le \max_{0\le \tau \le 1} \dualnorm{\Grad f(x_k+\tau\beta_k p_k)-\Grad f(x_k)}\norm{p_k},
                             \words{for all} k\in\setGbar.
\end{align*}
The continuity of $\Grad f$ implies that there exists some $\tau_k\in
[0,\beta_k]$ such that
\[
  \max_{0\le \tau \le 1}\dualnorm{\Grad f(x_k+\tau \beta_k p_k) - \Grad f(x_k)}
     = \dualnorm{\Grad f(x_k+\tau_k p_k)-\Grad f(x_k)}.
\]
Then
\begin{equation}\label{eq:ineq2}
  (1 - \cA)\epsilonbar < \dualnorm{\Grad f(x_k+\tau_k p_k)-\Grad f(x_k)}\norm{p_k}.
\end{equation}
However, $\alpha_kp_k\to 0$ implies $\tau_k p_k\to 0$ for $k\in\setG$, and
the continuity of $\Grad f$ gives
\[
  \dualnorm{\Grad f(x_k+\tau_k p_k)-\Grad f(x_k)}\to 0.
\]
As $\tSet{\norm{p_k} }$ is uniformly bounded above by $\theta$, the
right-hand side of (\ref{eq:ineq2}) converges to zero, which gives the
required contradiction.

Next it will be shown by contradiction that every convergent subsequence of
$\tSet{\mod{\Grad f(x_k)\T p_k}}$ converges to zero regardless of the value
of $\liminf_{k\to\infty} \big\|\ProjMat_k^T\Grad f(x\kd)\big\|$.  As
$\ProjMat\kd\ProjMat_k^T p\kd = p\kd$ for all $k$,
\begin{equation}\label{eq:projdd}
  \mod{\Grad f(x_k)\T p_k} = \mod{\Grad f(x\kd)^T \ProjMat\kd\ProjMat_k^T p\kd}
\end{equation}
for all $k$. Suppose that there exists a convergent subsequence of
$\tSet{\mod{\Grad f(x_k)\T p_k}}$, say $\tSet{\mod{\Grad f(x_{k_j})\T
    p_{k_j}}}$, that converges to a positive value. Then by
(\ref{eq:projdd}), the sequence $\tSet{\mod{\Grad
    f(x_{k_j}\drop)^T\ProjMat_{k_j}\drop\ProjMat_{k_j}^T p_{k_j}\drop}}$
converges to a positive value. As $\tSet{\norm{p_k}}$ is bounded by a
constant $\theta$,
\[
  \liminf_{j\to\infty} \big\|\ProjMat_{k_j}^T\Grad f(x_{k_j}\drop)\big\| >  0.
\]
Applying the previous arguments to the subsequence
$\tSet{\mod{\Grad f(x_{k_j})^T p_{k_j}}}$ gives
\[
  \lim_{j\to\infty} \mod{\Grad f(x_{k_j})^T p_{k_j}} = 0,
\]
which is a contradiction.

As the level set $\setL\big(f(x_0)\big)$ is bounded,
$\tSet{\mod{\Grad f(x_k)\T p_k}}$ is a bounded sequence. It follows that
\[
  \liminf_{k\to\infty}\mod{\Grad f(x_k)\T p_k}
    = \limsup_{k\to\infty} \mod{\Grad f(x_k)\T p_k} = 0.
\]
Therefore,  $\lim_{k\to\infty} \mod{\Grad f(x_k)\T p_k} = 0$.
\end{proof}

\begin{theorem}[Active-set projected search with a quasi-Wolfe search] \label{thm:quasiWolfe}
%---------------------------------------------------------------------
\startNewLine
Let $f$ be a scalar-valued continuously differentiable function defined on
$\FR=\Set{x\in \Re^n \st \bL \le x\le \bU }$.  Assume that $x_0\in\FR$ is
chosen such that the level set $\setL\big(f(x_0)\big)$ is bounded, and
$\tSet{ x_k }$ is given by $x\kp1 = x_k(\alpha_k)$, where $\alpha_k$ is a
quasi-Wolfe step. For an arbitrarily fixed $\epsilon>0$, define
$\epsilon_0=\epsilon$, and
\[
   \epsilon_k = \min\big\{\epsilon,\big\|\ProjMat\km1^T\Grad f(x\kdm1)\big\|\big\}.
\]
for $k\ge 1$, where each $\ProjMat_k$ is a matrix with orthonormal columns
that spans the set of projected directions with respect to the working set $\setW_k$.
If $\tSet{p_k}$ is a sequence of descent directions with $\norm{p_k}\le \theta$
for some constant $\theta$ independent of $k$,
$\ProjMat\kd\ProjMat_k^T p\kd = p\kd$ for all $k$,
and the components of $p_k$ satisfy $\elem{p_k}_i \ge 0$ if $\elem{x_k}_i
\le \bL_i+\epsilon_k$, and $\elem{p_k}_i \le 0$ if $\elem{x_k}_i \ge
\bU_i-\epsilon_k$, then
\[
  \lim_{k\to\infty}    \mod{\Grad f(x_k)\T p_k}     = 0.
\]
% Furthermore, if $\ProjMat\kd\ProjMat_k^T p\kd = p\kd$, then
% $\lim_{k\to\infty} \sgap\mod{\Grad f(x_k)\T p_k} = 0$.
% Also assume that $\{p_k\}$ is a sequence of feasible descent
% directions with components satisfying $\elem{p_k}_i \ge 0$ if $\elem{x_k}_i
% < \bL_i+\epsilon$, and $\elem{p_k}_i \le 0$ if $\elem{x_k}_i >
% \bU_i-\epsilon$ for some fixed $\epsilon>0$, and $\norm{p_k}\le \theta$ for
% some constant $\theta$ independent of $k$. Then
% \[
%   \lim_{k\to\infty} \mod{\Grad f(x_k)\T p_k} = 0.
% \]
\end{theorem}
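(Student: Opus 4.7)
The plan is to follow the structure of the proof of Theorem~\ref{thm:quasiArmijo} as closely as possible. Since every quasi-Wolfe step satisfies the quasi-Armijo condition {\bf(\qW1)}, the opening arguments carry over verbatim: $\tSet{f(x_k)}$ is strictly decreasing, the level-set assumption forces it to converge, and hence $\alpha_k\Mod{\Grad f(x_k)\T p_k}\to 0$. As in Theorem~\ref{thm:quasiArmijo}, I would first handle the case where $\liminf_{k\to\infty}\norm{\ProjMat_k\T\Grad f(x_k)} > 0$ and then extend to the general case by a subsequence argument.

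Under this additional assumption, $\liminf_{k\to\infty}\epsilon_k > 0$. I would argue by contradiction: suppose $\Mod{\Grad f(x_k)\T p_k} > \epsilonbar$ on an infinite set $\setG$, so that $\alpha_k\to 0$ along $\setG$. The geometric key is to bound away from zero the smallest positive kink step $\kappa_k\supmin$ on the path $x_k(\alpha)$. The sign restrictions on $p_k$ imply, via their contrapositives, that $\elem{p_k}_i > 0$ forces $\bU_i-\elem{x_k}_i>\epsilon_k$ and $\elem{p_k}_i<0$ forces $\elem{x_k}_i-\bL_i>\epsilon_k$. Hence $\kappa_k\supmin > \epsilon_k/\norm{p_k} \ge \epsilon_k/\theta$, which is bounded below for large $k$. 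Consequently, once $\alpha_k$ is small enough, $\alpha_k < \kappa_k\supmin$, so $\f_k$ is differentiable at $\alpha_k$, $x_k(\alpha_k)=x_k+\alpha_k p_k$, and condition {\bf(\qW4)} cannot be the one satisfied.

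The derivative-based contradiction then runs as follows. Since $\f_k$ is differentiable at $\alpha_k$, conditions {\bf(\qW2)} and {\bf(\qW3)} coincide and read $\Mod{\Grad f(x_k+\alpha_k p_k)\T p_k}\le\cWd\Mod{\Grad f(x_k)\T p_k}$. Using $\Grad f(x_k)\T p_k < 0$ and $\cWd<1$, this yields
\[
  \big(\Grad f(x_k+\alpha_k p_k)-\Grad f(x_k)\big)\T p_k
    \ge (1-\cWd)\Mod{\Grad f(x_k)\T p_k} > (1-\cWd)\epsilonbar.
\]
The dual-norm inequality together with $\norm{p_k}\le\theta$ gives $\dualnorm{\Grad f(x_k+\alpha_k p_k)-\Grad f(x_k)}\ge (1-\cWd)\epsilonbar/\theta$. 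However, $\alpha_k p_k\to 0$ on $\setG$ and the iterates lie in a compact level set, so continuity of $\Grad f$ forces the left-hand side to zero, producing the required contradiction.

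The extension to the general case is identical to the corresponding step in Theorem~\ref{thm:quasiArmijo}: if some subsequence of $\tSet{\Mod{\Grad f(x_k)\T p_k}}$ had a positive limit, then $\ProjMat_k\ProjMat_k\T p_k=p_k$ together with $\norm{p_k}\le\theta$ would force $\liminf_j\norm{\ProjMat_{k_j}\T\Grad f(x_{k_j})}>0$ along it, reducing to the case already treated. The hard part will be the elimination of {\bf(\qW4)}, which relies crucially on the lower bound $\kappa_k\supmin>\epsilon_k/\theta$; without the algorithmic sign modification of $p_k$, a quasi-Wolfe step at an arbitrarily small kink could satisfy {\bf(\qW4)} trivially and bypass the curvature-type inequality entirely, eliminating the only route to a Wolfe-type contradiction that is available in this nonsmooth setting.
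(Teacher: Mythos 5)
Your proposal is correct, and its skeleton matches the paper's: the quasi-Armijo opening giving $\alpha_k\Mod{\Grad f(x_k)\T p_k}\to 0$, the contradiction set $\setG$ with $\alpha_k\to 0$, the combined use of the sign conditions, $\norm{p_k}\le\theta$ and $\liminf_k\epsilon_k>0$, and the closing subsequence/boundedness step are all as in the paper. Where you genuinely diverge is in how the curvature conditions are handled. The paper never asserts differentiability of $\f_k$ at $\alpha_k$: it treats (\qW2)/(\qW4) and (\qW3) as separate cases, keeps the one-sided derivatives $\Grad f\big(x_k(\alpha_k)\big)\T P_{x_k(\alpha_k)}(p_k)$ and $\Grad f\big(x_k(\alpha_k)\big)\T P\supminus_{x_k(\alpha_k)}(p_k)$ in the inequalities, splits by the triangle inequality, and shows the extra term vanishes because $P_{x_k(\alpha_k)}(p_k)=P_{x_k}(p_k)=p_k$ (and likewise for $P\supminus$) for large $k\in\setG$. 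You convert the same geometric information (the sign conditions give $\kappa_k^{\min}>\epsilon_k/\theta$) into the statement that $\alpha_k$ eventually lies strictly below the first kink, so $x_k(\alpha)=x_k+\alpha p_k$ on $[0,\alpha_k]$, $\f_k$ is differentiable at $\alpha_k$, (\qW4) is excluded, and the contradiction is the classical smooth Wolfe curvature argument with the dual norm. The two observations are essentially equivalent under the sign conditions, but your packaging buys a shorter case analysis with no $P$/$P\supminus$ bookkeeping, while the paper's version applies each quasi-Wolfe case directly without having to certify that the accepted step is a smooth point of the path. If you write this out in full, two details deserve a line each: the sign conditions also give $\fP'(0)=\Grad f(x_k)\T P_{x_k}(p_k)=\Grad f(x_k)\T p_k$, so the right-hand side of (\qW2)/(\qW3) really is $\cW\Mod{\Grad f(x_k)\T p_k}$; and the passage from ``no subsequence has a positive limit'' to the stated limit uses boundedness of $\tSet{\Mod{\Grad f(x_k)\T p_k}}$, exactly as at the end of Theorem~\ref{thm:quasiArmijo}.
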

\begin{proof}
First, we show that $\lim_{k\to\infty} \mod{\Grad f(x_k)\T p_k} = 0$ if
$\liminf_{k\to\infty}\big\|\ProjMat_k^T\Grad f(x\kd)\big\| \ne 0$.  The
first quasi-Wolfe condition (\qW1) is equivalent to the quasi-Armijo
condition, and the arguments in the proof of Theorem~\ref{thm:quasiArmijo}
may be used to show that $\tSet{f(x_k)}$ is a convergent sequence. This
implies that
\[
  \lim_{k\to\infty} \alpha_k \Grad f(x_k)\T p_k = 0.
\]
The proof is by contradiction.  Suppose that $\mod{\Grad f(x_k)\T p_k}
\not\to 0$ as $k\to\infty$, then there exists some $\epsilonbar>0$ such
that $\mod{\Grad f(x_k)\T p_k} > \epsilonbar$ infinitely often.  Let $\setG
= \Set{ k \st \mod{\Grad f(x_k)\T p_k} > \epsilonbar }$, then it must be
that $\alpha_k \to 0$ for $k\in\setG$. As $\tSet{\norm{p_k}}$ is uniformly
bounded above by $\theta$, $\alpha_k p_k \to 0$ for $k\in\setG$.

If the quasi-Wolfe condition (\qW2) is satisfied, then
\[
  \Grad f\big(x_k(\alpha_k)\big){}\T P_{x_k(\alpha_k)}(p_k)\ge -\cWd \mod{\Grad f(x_k)\T p_k}.
\]
Similarly, if the quasi-Wolfe condition (\qW4) is satisfied, then
\[
 \Grad f\big(x_k(\alpha_k)\big){}\T P_{x_k(\alpha_k)}(p_k)\ge 0 \ge -\cWd \mod{\Grad f(x_k)\T p_k}.
\]
In either case, as $\Grad f(x_k)\T p_k<0$, it must hold that
\[
 \Grad f\big(x_k(\alpha_k)\big){}\T P_{x_k(\alpha_k)}(p_k)-\Grad f(x_k)\T p_k
   \ge (1 - \cW)\mod{\Grad f(x_k)\T p_k}
    >  (1 - \cW)\epsilonbar, \rwords{for $k\in\setG$}.
\]
The application of the  triangle inequality yields
\begin{align}
  0<(1-\cW)\epsilonbar
   &<   \Mod{\Grad f\big(x_k(\alpha_k)\big){}\T P_{x_k(\alpha_k)}(p_k)-\Grad f(x_k)\T p_k}                   \notag\\
   &\le \Mod{\Grad f\big(x_k(\alpha_k)\big){}\T P_{x_k(\alpha_k)}(p_k)-\Grad f(x_k)\T P_{x_k(\alpha_k)}(p_k)}\notag\\
   &    \quad\qquad \null + \Mod{\Grad f(x_k)\T P_{x_k(\alpha_k)}(p_k)-\Grad f(x_k)\T p_k}.                  \label{eq:ineq3}
\end{align}
Let $\dualnorm{\cdot}$ denote the norm dual to $\norm{\cdot}$, then
\begin{multline*}
 \Mod{\Grad f\big(x_k(\alpha_k)\big){}\T P_{x_k(\alpha_k)}(p_k)-\Grad f(x_k)\T P_{x_k(\alpha_k)}(p_k)}\\
   \le  \dualnorm{\Grad f\big(x_k(\alpha_k)\big) - \Grad f(x_k)} \norm{P_{x_k(\alpha_k)}(p_k)}
   \le  \dualnorm{\Grad f\big(x_k(\alpha_k)\big) - \Grad f(x_k)} \norm{p_k}.
\end{multline*}
As $\Grad f$ is continuous and $\norm{p_k}$ is uniformly bounded, the
right-hand side of this inequality must converge to zero for $k\in\setG$,
which implies that
\[
  \Mod{\Grad f\big(x_k(\alpha_k)\big)\T P_{x_k(\alpha_k)}(p_k) -\Grad f(x_k)\T P_{x_k(\alpha_k)}(p_k)} \to 0,
          \rwords{for $k\in\setG$}.
\]
Basic norm inequalities give
\begin{align*}
  \Mod{\Grad f(x_k)\T P_{x_k(\alpha_k)}(p_k)-\Grad f(x_k)\T p_k}
   &\le \dualnorm{\Grad f(x_k)}\norm{P_{x_k(\alpha_k)}(p_k)-p_k}\\
   &=   \dualnorm{\Grad f(x_k)}\norm{P_{x_k(\alpha_k)}(p_k)-P_{x_k}(p_k)}.
\end{align*}
As the level set $\setL\big(f(x_0)\big)$ is bounded, and the gradient
$\Grad f$ is continuous, the sequence of dual norms $\tSet{ \dualnorm{\Grad
    f(x_k)} }$ is uniformly bounded. The hypothesis that
$\liminf_{k\to\infty}\big\|\ProjMat_k^T\Grad f(x\kd)\big\|\ne 0$ implies
$\liminf_{k\to\infty} \epsilon_k > 0$.  Also, because
\[
  \norm{x_k(\alpha_k)-x_k}\le \norm{\alpha_k p_k}\to 0, \rwords{for $k\in\setG$},
\]
there must exist an $\kbar$ such that for all $k\ge \kbar$ in $\setG$,
\[
  \elem{x_k(\alpha_k)-x_k}_i < \epsilon_k.
\]
From the assumptions on the components of $p_k$, it must hold that for all
$k\ge \kbar$ in $\setG$, $\elem{p_k}_i<0$ only if $\elem{x_k}_i
> \bL_i+\epsilon_k$, in which case $\elem{x_k(\alpha_k)}_i>\bL_i$; and
$\elem{p_k}_i>0$ only if $\elem{x_k}_i< \bU_i-\epsilon_k$, in which case
$\elem{x_k(\alpha_k)}_i<\bU_i$.  It follows that, for $k\in\setG$
sufficiently large,
\[
  P_{x_k(\alpha_k)}(p\kd) = P_{x_k}(p\kd) = p\kd.
\]
Therefore,
\[
  \dualnorm{\Grad f(x_k)}\norm{P_{x_k(\alpha_k)}(p_k)-P_{x_k}(p_k)}\to 0, \rwords{for $k\in\setG$},
\]
and consequently
\[
  \Mod{\Grad f(x_k)\T P_{x_k(\alpha_k)}(p_k)-\Grad f(x_k)\T p_k}\to 0, \rwords{for $k\in\setG$}.
\]
It follows that the right-hand side of (\ref{eq:ineq3}) converges to zero
for $k\in\setG$, which gives the required contradiction.

It remains to consider the case where the quasi-Wolfe condition (\qW3) is
satisfied, i.e.,
\[
  \Grad f\big(x_k(\alpha_k)\big){}\T P\supminus_{x_k(\alpha_k)}(p_k)\ge -\cW \mod{\Grad f(x_k)\T p_k}.
\]
The assumption that $\Grad f(x_k)\T p_k<0$ gives
\[ %\begin{align*}
\Grad f\big(x_k(\alpha_k)\big){}\T P\supminus_{x_k(\alpha_k)}(p_k)-\Grad f(x_k)\T p_k
   \ge (1 - \cW)\mod{\Grad f(x_k)\T p_k}  >  (1 - \cW)\epsilonbar, \rwords{for $k\in\setG$},
\] %\end{align*}
which implies that
\begin{align}
  0 < (1 - \cW)\epsilonbar
   &<   \Mod{\Grad f\big(x_k(\alpha_k)\big){}\T P\supminus_{x_k(\alpha_k)}(p_k)-\Grad f(x_k)\T p_k}\notag\\
   &\le \Mod{\Grad f\big(x_k(\alpha_k)\big){}\T P\supminus_{x_k(\alpha_k)}(p_k)-\Grad f(x_k)\T P\supminus_{x_k(\alpha_k)}(p_k)}\notag\\
   &  \quad\qquad\null + \Mod{\Grad f(x_k)\T P\supminus_{x_k(\alpha_k)}(p_k)-\Grad f(x_k)\T p_k}.\label{eq:ineq4}
\end{align}
The definition of the dual norm yields
\begin{multline*}
 \Mod{\Grad f\big(x_k(\alpha_k)\big){}\T P\supminus_{x_k(\alpha_k)}(p_k)-\Grad f(x_k)\T P\supminus_{x_k(\alpha_k)}(p_k)} \\
   \le  \dualnorm{\Grad f\big(x_k(\alpha_k)\big) - \Grad f(x_k)} \norm{P\supminus_{x_k(\alpha_k)}(p_k)}
   \le  \dualnorm{\Grad f\big(x_k(\alpha_k)\big) - \Grad f(x_k)} \norm{p_k}.
\end{multline*}
From the continuity of $\Grad f$ and uniform boundedness of $\norm{p_k}$, the
right-hand side of the above inequality converges to zero for $k\in\setG$,
which means that
\[
  \Mod{\Grad f\big(x_k(\alpha_k)\big){}\T P\supminus_{x_k(\alpha_k)}(p_k)
      -\Grad f(x_k)\T P\supminus_{x_k(\alpha_k)}(p_k)} \to 0, \rwords{for $k\in\setG$}.
\]
Also,
\begin{align*}
 \Mod{\Grad f(x_k)\T P\supminus_{x_k(\alpha_k)}(p_k)-\Grad f(x_k)\T p_k}
   & \le \dualnorm{\Grad f(x_k)}\norm{P\supminus_{x_k(\alpha_k)}(p_k)-p_k}\\
   & =   \dualnorm{\Grad f(x_k)}\norm{P\supminus_{x_k(\alpha_k)}(p_k)-P_{x_k}(p_k)}.
\end{align*}
As the level set $\setL\big(f(x_0)\big)$ is bounded, and $\Grad f$ is
continuous, it must hold that the sequence of dual norms
$\tSet{\dualnorm{\Grad f(x_k)}}$ is uniformly bounded. Also, as
\[
  \norm{x_k(\alpha_k)-x_k}\le \norm{\alpha_k p_k}\to 0, \rwords{for $k\in\setG$},
\]
arguments analogous to those used to establish convergence in cases (\qW2)
and (\qW4) give
\[
  P\supminus_{x_k(\alpha_k)}(p\kd) = P_{x_k}(p\kd) = p\kd \rwords{for $k\in\setG$ sufficiently large},
\]
in which case
\[
  \dualnorm{\Grad f(x_k)}\norm{P\supminus_{x_k(\alpha_k)}(p_k)-P_{x_k}(p_k)}\to 0, \rwords{for $k\in\setG$}.
\]
This implies that
\[
  \Mod{\Grad f(x_k)\T P\supminus_{x_k(\alpha_k)}(p_k)-\Grad f(x_k)\T p_k}\to 0, \rwords{for $k\in\setG$}.
\]
It follows that the right-hand side of (\ref{eq:ineq4}) converges to zero
for $k\in\setG$, which gives the required contradiction.

Finally, the same arguments from the proof of Theorem~\ref{thm:quasiArmijo}
imply that
\[
  \lim_{k\to\infty} \mod{\Grad f(x_k)\T p_k} = 0
\]
regardless of the value of $\liminf_{k\to\infty} \big\|\ProjMat_k^T\Grad
f(x\kd)\big\|$.
\end{proof}

Based on the framework described in Section~\ref{subsec:Active-set-methods},
the limit $\lim_{k\to\infty} \mod{\Grad f(x_k)\T p_k} = 0$ implies that
\begin{equation}\label{eq:limDD}
  \lim_{k\to\infty} \mod{\Grad f(x_k)\T d\kd} = 0,
\end{equation}
which would further imply that the projected gradient,
$\ProjMat\kd\ProjMat_k^T\Grad f(x\kd)$, converges to zero for an
appropriate choice of $d_k$.  For example, if $d\kd =
-\ProjMat\kd\ProjMat_k^T\Grad f(x\kd)$, or $d_k$ is the solution of the
subproblem (\ref{eq:qNquadratic}) with the two-norm of the projected
approximate Hessian, $\norm{\ProjMat_k^T H\kd \ProjMat\kd}$, uniformly
bounded, then it may be verified that (\ref{eq:limDD}) implies that
$\norm{\ProjMat_k^T\Grad f(x_k)}\to 0$.

Under the nondegeneracy assumption defined below, any algorithm based on
the proposed framework for which $\norm{\ProjMat_k^T\Grad f(x\kd)}\to 0$
will identify the optimal active set in a finite number of iterations.

\begin{definition}
A point $\xstar \in \FR$ is a \emph{stationary point} of {\rm(\ref{BC})} if
$\Grad_if(\xstar) = 0$ for $\bL_i < \xstar_i < \bU_i$, $\Grad_i f(\xstar)
\ge 0$ for $\xstar_i = \bL\id$ and $\bL_i < \bU_i$, and $\Grad_i f(\xstar)
\le 0$ for $\xstar_i = \bU\id$ and $\bL_i < \bU_i$.  A stationary point
$\xstar$ is \emph{nondegenerate} if $\Grad_i f(\xstar) > 0$ for $\xstar_i =
\bL\id$ and $\bL_i < \bU_i$, and $\Grad_i f(\xstar) < 0$ for $\xstar_i =
\bU\id$ and $\bL_i < \bU_i$.
\end{definition}

The next result shows that a projected-search method with either a
quasi-Armijo or quasi-Wolfe search will identify the optimal active set in
a finite number of iterations.

\begin{theorem} %\label{thm:active}
%---------------------------------------------------------------------
\startNewLine
In addition to the assumptions of {\rm Theorem~\ref{thm:quasiArmijo}} or
{\rm Theorem~\ref{thm:quasiWolfe}}, assume that $\tSet{x_k}$ converges to a
nondegenerate stationary point $\xstar$.  Consider the extended active set
\[
  \setA_{\epsilon_k}(x_k)
    = \Set{ i \st \elem{x_k}_i \le \bL_i + \epsilon_k \words{or}
                  \elem{x_k}_i \ge \bU_i - \epsilon_k }.
\]
If $\norm{\ProjMat_k^T\Grad f(x_k)}\to 0$, then
$\setA_{\epsilon_k}(x_k)=\setA(x_k)=\setA(\xstar)$ for all $k$ sufficiently
large.
\end{theorem}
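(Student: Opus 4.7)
The plan is to establish the two required set equalities in turn, with the convergence of $\epsilon_k$ providing the essential bridge.

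First, I would verify that $\epsilon_k \to 0$. By hypothesis, $\|\Pi_k^T \Grad f(x_k)\| \to 0$, and the update $\epsilon_{k+1} = \min\{\epsilon, \|\Pi_k^T \Grad f(x_k)\|\}$ forces $\epsilon_k \to 0$. This is used throughout.

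Next I would show $\setA(x_k) = \setA(x\superstar)$ for all $k$ sufficiently large. For $i\notin\setA(x\superstar)$, one has $\bL_i<\xstar_i<\bU_i$; since $x_k\to\xstar$, we eventually get $\bL_i<[x_k]_i<\bU_i$, so $i\notin\setA(x_k)$. The harder containment is the other one. Fix $i\in\setA(\xstar)$ with, say, $\xstar_i=\bL_i$. Nondegeneracy gives $\Grad_i f(\xstar) > 0$, and continuity of $\Grad f$ with $x_k\to\xstar$ yields $\Grad_i f(x_k)$ bounded away from zero for large $k$. I would then argue that $i$ must lie in the working set $\setW_k$ for all sufficiently large $k$: if $i\notin\setW_k$ for infinitely many $k$, then $e_i$ is one of the columns of $\Pi_k$ for those indices, so $\|\Pi_k^T\Grad f(x_k)\| \ge |\Grad_i f(x_k)| \to |\Grad_i f(\xstar)| > 0$, contradicting the hypothesis. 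Having established $i\in\setW_k$ for large $k$, the algorithm sets $[d_k]_i=0$; combined with the sign rule for the modification to $p_k$ (which gives $[p_k]_i=\max\{0,0\}=0$ for indices with $[x_k]_i\le\bL_i+\epsilon_k$ in $\setW_k$), this forces $[p_k]_i=0$ and hence $[x_{k+1}]_i = [x_k]_i$. Consequently $[x_k]_i$ is eventually constant, and since $[x_k]_i\to\xstar_i=\bL_i$, that constant must equal $\bL_i$. The symmetric argument handles $\xstar_i=\bU_i$.

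Finally I would establish $\setA_{\epsilon_k}(x_k)=\setA(x_k)$. The inclusion $\setA(x_k)\subseteq\setA_{\epsilon_k}(x_k)$ is immediate. For the reverse, take $i\in\setA_{\epsilon_k}(x_k)$, say with $[x_k]_i\le\bL_i+\epsilon_k$. There are three possibilities for $i$ at $\xstar$: if $i\notin\setA(\xstar)$ or $\xstar_i=\bU_i$, then $\xstar_i>\bL_i$, and since $[x_k]_i\to\xstar_i$ while $\epsilon_k\to 0$, we eventually have $[x_k]_i>\bL_i+\epsilon_k$, a contradiction for large $k$. The remaining case $\xstar_i=\bL_i$ is exactly what the previous step handled, giving $[x_k]_i=\bL_i$ and therefore $i\in\setA(x_k)$.

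The main obstacle is the middle argument, specifically proving that $[x_k]_i$ must eventually sit exactly at the bound rather than merely converging to it. The essential ingredients are (i) using nondegeneracy together with the hypothesis on $\|\Pi_k^T\Grad f(x_k)\|$ to force active-bound indices into the working set, and (ii) exploiting the working-set zeroing of $d_k$ together with the sign-safeguarding that produces $p_k$ from $d_k$ to freeze the component of $x_k$ at whatever value it holds, forcing equality to the limit.
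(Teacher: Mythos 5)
Your proposal is correct, but it reaches the conclusion by a genuinely different route than the paper. You prove $\setA(x_k)=\setA(\xstar)$ directly: nondegeneracy plus the hypothesis $\norm{\ProjMat_k^T\Grad f(x_k)}\to 0$ forces each index with $\xstar_i=\bL_i$ (or $\bU_i$) into the working set $\setW_k$ for all large $k$; the assumption $\ProjMat_k\ProjMat_k^Tp_k=p_k$ (equivalently, the algorithm's construction of $p_k$ from $d_k$) then gives $\elem{p_k}_i=0$, so $\elem{x_{k+1}}_i=\elem{x_k}_i$ and the component is frozen at its limiting value, which must be the bound. The equality $\setA_{\epsilon_k}(x_k)=\setA(x_k)$ then falls out from $\epsilon_k\to 0$ and $x_k\to\xstar$. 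The paper instead first establishes $\setA_{\epsilon_k}(x_k)=\setA(\xstar)$ (its nondegeneracy argument only places active indices in the extended set, not in $\setW_k$), and then proves $\setA_{\epsilon_k}(x_k)\subset\setA(x_k)$ by a more delicate contradiction: using $\norm{x_{k+1}-x_k}\to 0$ together with only the sign conditions $\elem{p_k}_i\ge 0$ near $\bL_i$ and $\elem{p_k}_i\le 0$ near $\bU_i$, a component strictly inside the $\epsilon_k$-band stays trapped and monotone there for all subsequent $k$, forcing $\xstar_i>\bL_i$ and contradicting $i\in\setA(\xstar)$. Your freezing argument is more direct and yields the stronger intermediate fact that nondegenerate active components land exactly on their bounds after finitely many steps, at the price of leaning on the full zeroing property of $p_k$ on $\setW_k$ rather than merely the sign safeguards; both properties are among the stated hypotheses, so the use is legitimate. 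One small point to add for completeness: for fixed variables with $\bL_i=\bU_i$, nondegeneracy supplies no sign information and your working-set argument does not apply, but feasibility gives $\elem{x_k}_i=\bL_i=\bU_i$ for all $k$, so these indices belong to all three sets trivially (the paper disposes of this case with a one-line remark, and you should too).
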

%---------------------------------------------------------------------
\begin{proof}
First, we show that $\setA(\xstar)\subset \setA_{\epsilon_k}(x_k)$ for $k$
sufficiently large by contradiction. Assume the opposite is true, then
there exists $i\in\setA(\xstar)$ such that $i\notin\setA_{\epsilon_k}(x_k)$
for an infinite subsequence $\setK$, which implies that $i\notin\setW_k$
for all $k\in\setK$. It follows that
\[
  \mod{\Grad_i f(x_k)}\le \norm{\ProjMat_k^T\Grad f(x\kd)} \rwords{for $k\in\setK$}.
\]
As $f$ is continuously differentiable and $\norm{\ProjMat_k^T\Grad
  f(x\kd)}\to 0$, letting $k\to\infty$ in $\setK$ gives
\[
  \mod{\Grad_i f(\xstar)} = \lim_{k\to\infty, k\in\setK}\mod{\Grad_i f(x_k)}=0.
\]
This contradicts the nondegeneracy of $\xstar$.

Now we show that $\setA_{\epsilon_k}(x_k)\subset\setA(\xstar)$ for $k$
sufficiently large. If $\bL_i = \bU_i$, a simple argument gives $i\in
\setA_{\epsilon_k}(x_k)$ and $i\in\setA(\xstar)$. Consider an index $i$
such that $\bL_i < \bU_i$. From the definition of $\epsilon_k$, the
assumption $\norm{\ProjMat_k^T\Grad f(x\kd)}\to 0$ implies that
$\epsilon_k\to 0$.  Hence, for $k$ sufficiently large, $\bL_i+\epsilon_k <
\bU_i-\epsilon_k$.  If $i\notin\setA(\xstar)$, then $\bL_i <
\elem{\xstar}_i<\bU_i$. As $\tSet{x_k}\to \xstar$ and $\epsilon_k\to 0$,
$\bL_i+\epsilon_k < \elem{x_k}_i < \bU_i-\epsilon_k$ for $k$ sufficiently
large, which implies that $i\notin\setA_{\epsilon_k}(x_k)$.  Therefore, if
$i\notin\setA(\xstar)$, then $i\notin\setA_k(x_k)$, i.e.
$\setA_{\epsilon_k}(x_k)\subset \setA(\xstar)$ for $k$ sufficiently
large. We conclude that $\setA_{\epsilon_k}(x_k) = \setA(\xstar)$ for all $k$
sufficiently large.

It remains to show that $\setA(x_k) = \setA_{\epsilon_k}(x_k)$ for $k$
sufficiently large. Obviously $\setA(x_k)\subset\setA_{\epsilon_k}(x_k)$
for all $k$. It is trivial if $\bL_i = \bU_i$.  Now consider the case where
$\bL_i < \bU_i$. Note that $\tSet{x_k}\to \xstar$ implies
$\lim_{k\to\infty}\norm{x_{k+1} - x_k} = 0$. As $\lim_{k\to\infty}(\bU_i -
\epsilon_{k+1}) - (\bL_i+\epsilon_k) = \bU_i - \bL_i>0$,
$\Mod{\elem{x_{k+1} - x_k}_i} < (\bU_i - \epsilon_{k+1}) -
(\bL_i+\epsilon_k)$ for $k$ sufficiently large. Suppose $k_0$ is such that,
for all $k\ge k_0$, $\setA_{\epsilon_k}(x_k) = \setA(\xstar)$ and
$\Mod{\elem{x_{k+1} - x_k}_i} < (\bU_i - \epsilon_{k+1}) -
(\bL_i+\epsilon_k)$.  The inclusion
$\setA_{\epsilon_k}(x_k)\subset\setA(x_k)$ for all $k\ge k_0$ is
established using a contradiction argument.  Assume that there exists
$i\in\setA_{\epsilon_k}(x_k) = \setA(\xstar)$ for all $k\ge k_0$, but
$i\notin\setA(x_k)$ for some $\kbar\ge k_0$. Then either $\bL_i <
\elem{x_{\kbar}}_i \le \bL_i + \epsilon_{\kbar}$ or $\bU_i -
\epsilon_{\kbar} \le \elem{x_{\kbar}}_i < \bU_i$. If the inequality $\bL_i
< \elem{x_{\kbar}}_i \le \bL_i + \epsilon_{\kbar}$ holds, the definition of
$p_k$ in Algorithm~\ref{alg:projSearch-qWolfe} implies that
$\elem{p_{\kbar}}_i\ge 0$, and it must be the case that $\bL_i <
\elem{x_{\kbar}}_i \le \elem{x_{\kbar+1}}_i$. In addition,
$\Mod{\elem{x_{\kbar+1} - x_{\kbar}}_i} < (\bU_i - \epsilon_{\kbar+1}) -
(\bL_i+\epsilon_{\kbar})$ implies that $\elem{x_{\kbar+1}}_i < \bU_i -
\epsilon_{\kbar+1}$. As $i\in\setA_{\epsilon_{\kbar+1}}(x_{\kbar+1})$, it
must hold that $\bL_i < \elem{x_{\kbar}}_i \le \elem{x_{\kbar+1}}_i \le
\bL_i+\epsilon_{\kbar+1}$.  Inductively, for all $k\ge \kbar$, $\bL_i <
\elem{x_{\kbar}}_i \le \elem{x_k}_i \le \bL_i+\epsilon_k$, which implies
that $\elem{\xstar}_i\ge \elem{x_{\kbar}}_i > \bL_i$.  A similar argument
shows that if $\bU_i - \epsilon_{\kbar} \le \elem{x_{\kbar}}_i < \bU_i$,
then $\elem{\xstar}_i \le \elem{x_{\kbar}}_i < \bU_i$.  It follows that
$i\notin\setA(\xstar)$, which contradicts the assumption that
$i\in\setA_{\epsilon_k}(x_k) = \setA(\xstar)$ for all $k\ge k_0$.
Therefore, $\setA_{\epsilon_k}(x_k)\subset\setA(x_k)$ for all $k\ge k_0$,
which completes the proof.
\end{proof}

A simple example shows that the nondegeneracy of a stationary point is
necessary for identifying the optimal active set in a finite number of
iterations.  Let $f:\Re^2\to\Re$ be given by $f(x) = \frac{1}{5}\norm{x}^2$,
and let $\FR = \Set{ x\in\Re^2 : x\ge 0 }$. For this problem $\xstar = (0,0)^T$
is a degenerate stationary point and the global minimizer of $f$ over
$\FR$. Assume that the step length $\alpha_k\le 1$ for all $k$, and let
$\epsilon = \frac{1}{\sqrt{2}}$. Starting from $x_0 = (1,1)^T$, the
projected-gradient method gives
\[
  x_k = \prod_{j=0}^k(1-\tfrac25 \alpha_j) \pmat{ 1 \\ 1 },  \words{and}
  \epsilon_k
      = \tfrac25 \norm{x_{k-1}}
      = \tfrac{2\sqrt{2}}{5}\,\prod_{j=0}^{k-1}\big(1-\tfrac{2}{5}\alpha_j\big)
\]
for $k\ge 1$. Then $\tSet{x_k}$ converges to the degenerate stationary point
$\xstar$, and
\[
  \elem{x_k}_i
    = \prod_{j=0}^k(1-\tfrac{2}{5}\alpha_j)
    > \tfrac{2\sqrt{2}}{5}\prod_{j=0}^{k-1}\big(1-\tfrac{2}{5}\alpha_j\big)
    = \epsilon_k, \quad i = 1, 2
\]
for all $k\ge 1$. It follows that $\setA_{\epsilon_k}(x_k)=\emptyset$ for all
$k$, although $\setA(\xstar)=\Set{1,2}$.

\subsection{Convergence analysis: projected-search interior methods} %\label{subsec:convergence-interior}
% --------------------------------------------
In this section, we consider the convergence properties of the class of
projected-search interior methods described in
Section~\ref{subsec:Interior-point-methods}.  As the parameter $\mu$ is
fixed in problem (\ref{IPBC}), to simplify the notation, we write
$M(v)=M(v\given \mu)$.  The line search is performed on the univariate
function $\f_k(\alpha) = M\big(\ProjFk(v_k + \alpha_k \dv_k)\big)$.

\begin{theorem}[Interior projected search with a quasi-Armijo search]\label{thm:qArmijo-interior}
%---------------------------------------------------------------------
\startNewLine
Assume that $v_0\in\FR$ is chosen such that the level set
$\setL\big(M(v_0)\big)$ is bounded, and $\Set{v_k}$ is defined by
$v\kp1 = \ProjFk(v_k + \alpha_k \dv_k)$, where $\ProjFk(v)$ is the projection of
$v$ onto the set
\[ %\begin{equation} \label{eq:FRk-defined2}
\FR_k = \Set{ v \st  v_k - \btol(v_k - \bL_v) \le v \le v_k + \btol(\bU_v-v_k) },
\] %\end{equation}
with $\btol$ a fixed positive scalar such that $0 < \btol < 1$, $\dv_k$ is a descent
direction, and $\alpha_k$ is a quasi-Armijo step. Also assume that
$\norm{\dv_k}\le \theta$ for some constant $\theta$ independent of $k$
Then
\[
    \lim_{k\to\infty}\mod{\Grad M(v_k)\T \dv_k}=0.
\]
\end{theorem}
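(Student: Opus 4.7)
The plan is to adapt the argument from Theorem~\ref{thm:quasiArmijo} to the interior-point setting. The absence of a working set simplifies one portion of that proof, but the projection $\ProjFk$ now varies with $k$ through the fraction-to-boundary definition (\ref{eq:FRk-defined}), which introduces a new technicality. First, I would use the quasi-Armijo condition to see that $\tSet{M(v_k)}$ is strictly decreasing, and combine this with the boundedness of $\setL\big(M(v_0)\big)$ and the continuity of $M$ on its domain to conclude that $\tSet{M(v_k)}$ converges. It follows that $\alpha_k\cAd\mod{\Grad M(v_k)\T \dv_k}\to 0$. Arguing by contradiction, suppose there exists $\epsilonbar>0$ and an infinite index set $\setG$ on which $\mod{\Grad M(v_k)\T \dv_k}>\epsilonbar$. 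Then $\alpha_k\to 0$ for $k\in\setG$, and the backtracking construction of the quasi-Armijo step produces a trial step $\beta_k$ (a fixed multiple of $\alpha_k$) that violates the sufficient-decrease inequality for all sufficiently large $k\in\setG$ with $t_k>0$.

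The hard part will be to show that for such $k$, the trial step is not altered by the projection, i.e., $\ProjFk(v_k+\beta_k\dv_k)=v_k+\beta_k\dv_k$. This is where the implicit bound constraints embedded in $M$ enter: because $M$ is unbounded or undefined at the boundary of the original feasible region, the boundedness of $\setL\big(M(v_0)\big)$ forces $\tSet{v_k}$ to lie in a compact subset of the interior of $\FR$, and hence there exists a uniform $\delta>0$ with $v_k-\bL_v\ge\delta e$ and $\bU_v-v_k\ge\delta e$ for every $k$. The definition (\ref{eq:FRk-defined}) of $\FR_k$ then shows that any $v$ with $\norm{v-v_k}<\btol\delta$ lies in $\FR_k$. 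Since $\norm{\dv_k}\le\theta$ is uniformly bounded and $\beta_k\to 0$ on $\setG$, the trial point $v_k+\beta_k\dv_k$ satisfies this bound for all sufficiently large $k\in\setG$, and the projection is inactive.

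With the projection removed, the remainder follows the same path as in Theorem~\ref{thm:quasiArmijo}. A first-order Taylor expansion of $M(v_k+\beta_k\dv_k)$ about $v_k$, combined with the violated quasi-Armijo inequality at $\beta_k$, yields
\[
  (1-\cAd)\epsilonbar < \dualnorm{\Grad M(v_k+\tau_k\dv_k)-\Grad M(v_k)}\,\norm{\dv_k}
\]
for some $\tau_k\in[0,\beta_k]$. Continuity of $\Grad M$ on the compact level set, together with $\tau_k\dv_k\to 0$ for $k\in\setG$ and the uniform bound $\norm{\dv_k}\le\theta$, drives the right-hand side to zero and produces the required contradiction. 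Because there is no working-set machinery in the interior setting, the additional argument from the proof of Theorem~\ref{thm:quasiArmijo} that extends the conclusion from subsequences with $\liminf\norm{\ProjMat_k^T\Grad f(x_k)}>0$ to the full sequence is unnecessary here, so this direct contradiction completes the proof.
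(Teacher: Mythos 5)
Your proposal is correct and follows essentially the same route as the paper's proof: both establish convergence of $\tSet{M(v_k)}$ from the quasi-Armijo condition and the bounded level set, then use the fact that $\setL\big(M(v_0)\big)$ is a compact subset of the open region where $M$ is defined to obtain a uniform positive distance from the bounds (your $\delta$ is the paper's $\gamma$), conclude that the projection onto $\FR_k$ is inactive at the rejected trial step $\beta_k=\alpha_k/\btol$ for large $k\in\setG$, and finish with the same Taylor-expansion contradiction as in Theorem~\ref{thm:quasiArmijo}. Your observation that the working-set subsequence argument is not needed here also matches the paper.
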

%---------------------------------------------------------------------
\begin{proof}
Observe that the quasi-Armijo condition implies that
$\tSet{M(v_k) }$ is a strictly decreasing sequence. As the set
$\setL\big(M(v_0)\big)$ is bounded, it follows that $\tSet{M(v_k)}$
converges, with
\[
      0 =  \lim_{k\to\infty} M(v_k)-M(v\kp1)
        \ge \lim_{k\to\infty} \alpha_k \cA \mod{\Grad M(v_k)\T \dv_k}=0.
\]
The proof is by contradiction. Suppose that $\mod{\Grad M(v_k)\T\dv_k}
\not\to 0$ as $k\to\infty$, then there must exist some $\epsilonbar>0$ such
that $ \mod{\Grad M(v_k)\T \dv_k} > \epsilonbar$ infinitely often.  Let
$\setG = \Set{ k \st \mod{\Grad M(v_k)\T \dv_k} > \epsilonbar }$, then
$\alpha_k \to 0$ for $k\in\setG$.  Let
$\gamma=\inf_{v\in\setL(M(v_0)),i}\Set{v_i-\comp{\bL_v}_i,\comp{\bU_v}_i-v_i}$. By
the continuity of $M(v)$ in $\FR$, the level set $\setL(M(v_0))$ is
closed. Hence, $\setL(M(v_0))$ is a compact subset of the open set $\FR$,
which implies that $\gamma>0$.  By the quasi-Armijo condition, each $v_k$
lies in $\setL(M(v_0))$. Therefore, for each $i$,
$\comp{\bU_v}_i-[v_k]_i>\gamma,$ and $[v_k]_i-\comp{\bL_v}_i>\gamma$ for
all $k$.  For each $k$, define the step $\beta_k = \alpha_k/\sigma$. As
$\Set{\norm{\dv_k}}$ is uniformly bounded by $\theta$, there exists $\bar
k$ such that each componet of $\beta_k\dv_k$ satisfies
$\mod{\comp{\beta_k\dv_k}_i}<\btol\gamma$ for all $k\ge \bar k$ in
$\setG$. It follows that $v_k+\beta_k\dv_k\in\FR_k,$ which implies
$\ProjFk(v_k + \beta_k \dv_k)=v_k + \beta_k \dv_k$.

The continuity of $\Grad M$ implies that there exists some $\tau_k\in[0,\beta_k]$ such that
   \[
     \max_{0\le \tau \le 1}\dualnorm{\Grad M(v_k+\tau \beta_k \dv_k) - \Grad M(v_k)}
     = \dualnorm{\Grad M(v_k+\tau_k \dv_k)-\Grad M(v_k)}.
   \]
Similar arguments in the proof of Theorem~\ref{thm:quasiArmijo} maybe used
to show that
\begin{equation}\label{eq:ineq6}
     (1 - \cA)\epsilonbar < \dualnorm{\Grad M(v_k+\tau_k\dv_k)-\Grad M(v_k)}\norm{\dv_k}.
\end{equation}
However, $\alpha_k\dv_k\to 0$ implies $\tau_k\dv_k\to 0$ for $k\in\setG$,
and the continuity of $\Grad M$ gives
\[
      \dualnorm{\Grad M(v_k+\tau_k\dv_k)-\Grad M(v_k)}\to 0.
\]
As $\tSet{\norm{\dv_k} }$ is uniformly bounded above by $\theta$, the
right-hand side of (\ref{eq:ineq6}) converges to zero, which gives the
required contradiction.
\end{proof}

\begin{theorem}[Interior projected search with a quasi-Wolfe search]%\label{thm:qWolfe-interior}
%-------------------------------------------------------------------
\startNewLine
Assume that $v_0\in\FR$ is chosen such that the level set
$\setL\big(M(v_0)\big)$ is bounded, and $\Set{v_k}$ is defined by $v\kp1 =
\ProjFk(v_k + \alpha_k \dv_k)$, where $\ProjFk(v)$ is the projection of $v$
onto the set
\[ %\begin{equation} \label{eq:FRk-defined3}
     \FR_k = \Set{ v \st  v_k - \btol(v_k - \bL_v) \le v \le v_k + \btol(\bU_v-v_k) },
\] %\end{equation}
with $\btol$ a fixed positive scalar such that $0 < \btol < 1$, $\dv_k$ is
a descent direction, and $\alpha_k$ is a quasi-Wolfe step. Also assume that
$\norm{\dv_k}\le \theta$ for some constant $\theta$ independent of $k$ Then
\[
     \lim_{k\to\infty}\mod{\Grad M(v_k)\T \dv_k}=0.
\]
\end{theorem}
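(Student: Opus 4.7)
The plan is to follow the contradiction template used in Theorem~\ref{thm:qArmijo-interior} and Theorem~\ref{thm:quasiWolfe}, but to exploit the key structural feature of the interior-point setting: since the level set $\setL(M(v_0))$ is a compact subset of the open feasible region and the fraction-to-the-boundary parameter $\btol$ is bounded away from $1$, the path $\alpha\mapsto \ProjFk(v_k+\alpha\dv_k)$ coincides with $v_k+\alpha\dv_k$ for all sufficiently small $\alpha$, so kinks can be eliminated on any subsequence along which $\alpha_k\to 0$.

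First I would use the quasi-Armijo condition {\bf(\qW1)} together with the boundedness of $\setL(M(v_0))$ exactly as in the first paragraph of the proof of Theorem~\ref{thm:qArmijo-interior} to conclude that $\{M(v_k)\}$ is a monotone convergent sequence, and hence $\alpha_k\mod{\Grad M(v_k)\T\dv_k}\to 0$. Arguing by contradiction, I assume the existence of an infinite set $\setG$ and a constant $\epsilonbar>0$ with $\mod{\Grad M(v_k)\T\dv_k}>\epsilonbar$ for $k\in\setG$, which forces $\alpha_k\to 0$ on $\setG$.

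Next I would reuse the compactness/interiority argument from Theorem~\ref{thm:qArmijo-interior}: let
\[
   \gamma = \inf_{v\in\setL(M(v_0)),\,i}\Set{v_i-[\bL_v]_i,\;[\bU_v]_i-v_i}>0.
\]
Because $v_k\in\setL(M(v_0))$ and $\{\norm{\dv_k}\}$ is uniformly bounded by $\theta$, $\alpha_k\to 0$ for $k\in\setG$ implies that for all sufficiently large $k\in\setG$ the whole segment $\{v_k+\tau\alpha_k\dv_k\st \tau\in[0,1]\}$ lies strictly in the interior of $\FR_k$. Thus $\ProjFk$ acts as the identity on this segment, the restriction $\f_k(\alpha)=M(\ProjFk(v_k+\alpha\dv_k))$ is differentiable at $\alpha_k$, and no kink is encountered, so condition {\bf(\qW4)} cannot apply and both {\bf(\qW2)} and {\bf(\qW3)} collapse to the standard inequality
\[
  \mod{\Grad M(v_k+\alpha_k\dv_k)\T\dv_k}\le \cWd\mod{\Grad M(v_k)\T\dv_k}.
\]

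Finally, I would close the contradiction with the classical Wolfe-style manipulation: since $\Grad M(v_k)\T\dv_k<0$, the previous inequality yields
\[
  (1-\cWd)\mod{\Grad M(v_k)\T\dv_k}
     \le \bigl(\Grad M(v_k+\alpha_k\dv_k)-\Grad M(v_k)\bigr){}\T\dv_k
     \le \dualnorm{\Grad M(v_k+\alpha_k\dv_k)-\Grad M(v_k)}\norm{\dv_k},
\]
for $k\in\setG$ sufficiently large. The left-hand side exceeds $(1-\cW)\epsilonbar>0$, while $\alpha_k\dv_k\to 0$ for $k\in\setG$ and the continuity of $\Grad M$ on the compact set $\setL(M(v_0))$ drive the right-hand side to zero, which is the desired contradiction.

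I expect the one nontrivial piece to be the verification that eventually the step truly avoids the boundary of $\FR_k$ so that the path is smooth at $\alpha_k$; this is exactly where the fraction-to-the-boundary definition of $\FR_k$ in \eqref{eq:FRk-defined} together with the positive distance $\gamma$ does the work, and once this interior reduction is established the remainder of the proof is a direct adaptation of the Wolfe half of Theorem~\ref{thm:quasiWolfe} without the extra projection terms $P_{x_k(\alpha_k)}(p_k)-p_k$ that complicated that argument.
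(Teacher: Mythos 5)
Your proposal is correct and follows essentially the same route as the paper: the quasi-Armijo argument gives convergence of $\{M(v_k)\}$, a contradiction subsequence forces $\alpha_k\to 0$, and the compactness of $\setL\big(M(v_0)\big)$ inside the open region yields $\gamma>0$ so that for large $k\in\setG$ the projection onto $\FR_k$ acts as the identity along the step. The only cosmetic difference is that the paper closes by invoking the case-by-case (\qW2)/(\qW3)/(\qW4) arguments of Theorem~\ref{thm:quasiWolfe}, whereas you observe directly that smoothness of the path at $\alpha_k$ collapses all cases to the classical strong-Wolfe inequality---a harmless streamlining of the same idea.
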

%---------------------------------------------------------------------
\begin{proof}
The first quasi-Wolfe condition (\qW1) is equivalent to the quasi-Armijo
condition, and the arguments in the proof of Theorem~\ref{thm:qArmijo-interior}
may be used to show that $\tSet{M(v_k)}$ is a convergent sequence. This
implies that
\[
     \lim_{k\to\infty} \alpha_k \Grad M(v_k)\T \dv_k = 0.
\]
The proof is by contradiction.  Suppose that $\mod{\Grad M(v_k)\T
  \dv_k}\not\to 0$ as $k\to\infty$, then there exists some $\epsilonbar>0$
such that $\mod{\Grad M(v_k)\T \dv_k} > \epsilonbar$ infinitely often.  Let
$\setG = \tSet{ k \st \mod{\Grad M(v_k)\T \dv_k} > \epsilonbar }$, then it
must be that $\alpha_k \to 0$ for $k\in\setG$. As $\tSet{\norm{\dv_k}}$ is
uniformly bounded above by $\theta$, $\alpha_k \dv_k \to 0$ for
$k\in\setG$.  Let
$\gamma=\inf_{v\in\setL(M(v_0)),i}\Set{v_i-\comp{\bL_v}_i,\comp{\bU_v}_i-v_i}$.
By the same arguments in the proof of Theorem~\ref{thm:qArmijo-interior},
$\gamma>0$, and for each $i$, $\comp{\bU_v}_i-[v_k]_i>\gamma,$ and
$[v_k]_i-\comp{\bL_v}_i>\gamma$ for all $k$.  Therefore, for $k\in\setG$
sufficiently large such that each component
$\comp{\alpha_k\dv_k}_i<\btol\gamma$,
\[
     P_{v_k(\alpha_k)}(\dv\kd) = P_{v_k}(\dv\kd) = \dv\kd.
\]

Similar arguments in the proof of Theorem~\ref{thm:quasiWolfe} may be used
to obtain the required contradictions for each of the cases where (\qW2),
(\qW3) or (\qW4) holds.
\end{proof}

\section{Numerical Experiments} \label{sec:numerical}
%------------------------------
In this section we illustrate the numerical performance of the two classes
of projected-search method described in
Section~\ref{sec:Proj-search-methods}.  All testing was done on problems
taken from the \CUTEst{} test collection (see Bongartz \etal{}
\cite{BonCGT95} and Gould, Orban and Toint~\cite{GouOT03}).

%As of January~1, 2020, the \CUTEst{} test set contains 147
%bound-constrained problems.

\subsection{Numerical results for an active-set projected search method}\label{subsec:results-active}
%------------------------------------------------------------------------
Numerical results were obtained for an active-set projected search method in
which the direction $d_k$ was computed as the solution of
(\ref{eq:qNquadratic}) with $H_k$ chosen as a positive-definite
limited-memory BFGS approximation of $\Hess f(x_k)$ (see Ferry~\cite{Fer11}
and Ferry \etal~\cite{FerGWZ20b}).  All testing was done on problems taken
from the \CUTEst{} test collection (see Bongartz \etal~\cite{BonCGT95} and
Gould, Orban and Toint~\cite{GouOT03}). As of July~1, 2020, the \CUTEst{}
test set contains 154 bound-constrained problems of the form (\ref{BC}).
Although many problems allow for the number of variables and constraints to
be adjusted in the standard interface format (SIF) data file, our tests used
the default dimensions set in the \CUTEst{} distribution.  This gave
problems ranging in size from \Cute{BQ1VAR} (one variable) to \Cute{WALL100}
(149624 variables).

The practical effectiveness of the quasi-Wolfe search was evaluated by
running two limited-memory quasi-Newton methods, one with a quasi-Wolfe
search and the other with a quasi-Armijo search.  The resulting
implementations, \UBOPTqWolfe\@, and \UBOPTqArmijo{} are based on the Fortran
package \UBOPT{} (see Ferry~\etal~\cite{FerGWZ20b}).  In the quasi-Wolfe
search, the kink steps are sorted in decreasing order in $O(n\log n)$ flops
using a heapsort algorithm (see, e.g., Williams~\cite{Wil64},
Knuth~\cite[Section~5.2.3]{Knu97}), adapted from a Fortran implementation
by Byrd \etal~\cite{ByrLNZ95}.  For \UBOPTqWolfe\@, the Armijo tolerance
$\cA$ was set at $10^{-4}$ and the Wolfe tolerance $\cW = 0.9$. In
\UBOPTqArmijo\@, $\cA = 0.3$.  The scalar $\epsilon$ was set to the machine
precision in the expression for $\epsilon_k$ in the calculation
(\ref{eq:working-set}) of the working set.

In order to provide some measure of the efficiency of the projected-search
method relative to a state-of-the-art method for bound-constrained
optimization, the solvers \UBOPTqWolfe{} and \UBOPTqArmijo{} were compared
with the limited-memory method \LBFGSB{} (Byrd \etal~\cite{ByrLNZ95}, Zhu
\etal~\cite{ZhuBLN97}, and Morales and Nocedal \cite{MorN11}).  All three
solvers were applied to the 154 bound-constrained problems from the
\CUTEst{} test set. The runs were terminated at the first point $x\last$
such that
\begin{enumerate}[\bf(a)]
\item \label{res:opt1}
  $\infnorm{\proj_{x\last}\big(-\Grad f(x\last)\big)} \le 10^{-5}\big(1 + \abs{f(x\last)}\big)$ \mgap and

\item %\label{res:opt2}
  $\abs{f(x\last) - f(x\lastm1)} \le 10^7\epsilon\M\times\max\Set{\abs{f(x\last)},\mod{f(x\lastm1)}, 1 }$; \mgap or

\item \label{res:opt3}
 $\infnorm{\proj_{x_{\scriptscriptstyle T}}\big(-\Grad f(x\last)\big)} < \sqrt{\epsilon\M}$,
\end{enumerate}
where $\epsilon\M$ is the machine precision.  In the first iteration of the
algorithms, only condition~(\ref{res:opt3}) is tested.  A nonoptimal
termination was signaled by the violation of a time limit of 3600 seconds,
a limit of $10^6$ iterations, or an abnormal exit because of numerical
difficulties.

The solver \UBOPTqArmijo{} failed on nine problems, with six failing
because of numerical difficulties
(\Cute{BLEACHNG},
\Cute{BQPGAUSS},
\Cute{BRATU1D},
\Cute{GRIDGENA},
\Cute{RAYBENDL},
\Cute{WALL10}, and
\Cute{WEEDS}).
\UBOPTqWolfe{} failed on six problems, with four failures caused by numerical
difficulties
(\Cute{GRIDGENA},
\Cute{PALMER5E},
\Cute{PROBPENL}, and
\Cute{WALL10}).
\UBOPTqWolfe{} identified problem \Cute{BRATU1D} as being unbounded. For
both solvers, \Cute{CYCLOOCTLS} and \Cute{WALL50} could not be solved
within the one hour time limit.  In the cases of numerical difficulties,
the search algorithms were unable to compute an appropriate step.  We note
that for \UBOPTqWolfe{}, the run for \Cute{PROBPENL} terminated at a
near-optimal point that satisfied condition~\eqref{res:opt1} and
$\infnorm{\proj_{x\last}\big(-\Grad f(x\last)\big)} = 1.99 \times 10^{-7}$.
The solver \LBFGSB{} failed on 16 problems. Seven failures were caused by
numerical difficulties
(\Cute{BQPGAUSS},
\Cute{BRATU1D},
\Cute{GRIDGENA},
\Cute{PALMER5A},
\Cute{PALMER5B},
\Cute{PALMER7A}, and
\Cute{WALL10}),
seven problems exceeded the iteration limit
(\Cute{CHEBYQAD},
\Cute{PALMER1E},
\Cute{PALMER2E},
\Cute{PALMER3E},
\Cute{PALMER4E},
\Cute{PALMER6E}, and
\Cute{PALMER8E}),
and two problems exceeded the time limit
(\Cute{CYCLOOCTLS} and \Cute{WALL50}).
More details of the runs are given by Ferry \etal~\cite{FerGWZ20d}.

The relative performance of the solvers is summarized using performance
profiles (in $\log_2$ scale), which were proposed by Dolan and
Mor\'{e}~\cite{DolM02}.  Let $\setP$ denote a set of problems used for a
given numerical experiment.  For each method $s$ we define the function
$\pi_s \st [0,r\subM]\mapsto\Re\supplus$ such that
\[
  \pi_s(\tau) = \frac{1}{n_p}\Mod{\Set{ p \in \setP \st \log_2(r_{p,s}) \le \tau }},
\]
where $n_p$ is the number of problems in the test set and $r_{p,s}$ denotes
the ratio of the number of function evaluations needed to solve problem $p$
with method $s$ and the least number of function evaluations needed to
solve problem $p$. If method $s$ failed for problem $p$, then $r_{p,s}$ is
set to be twice of the maximal ratio. The parameter $r\subM$ is the maximum
value of $\log_2(r_{p,s})$.
\begin{figure}[h]
  \begin{center}
  \includegraphics[width=8cm,height=7cm]{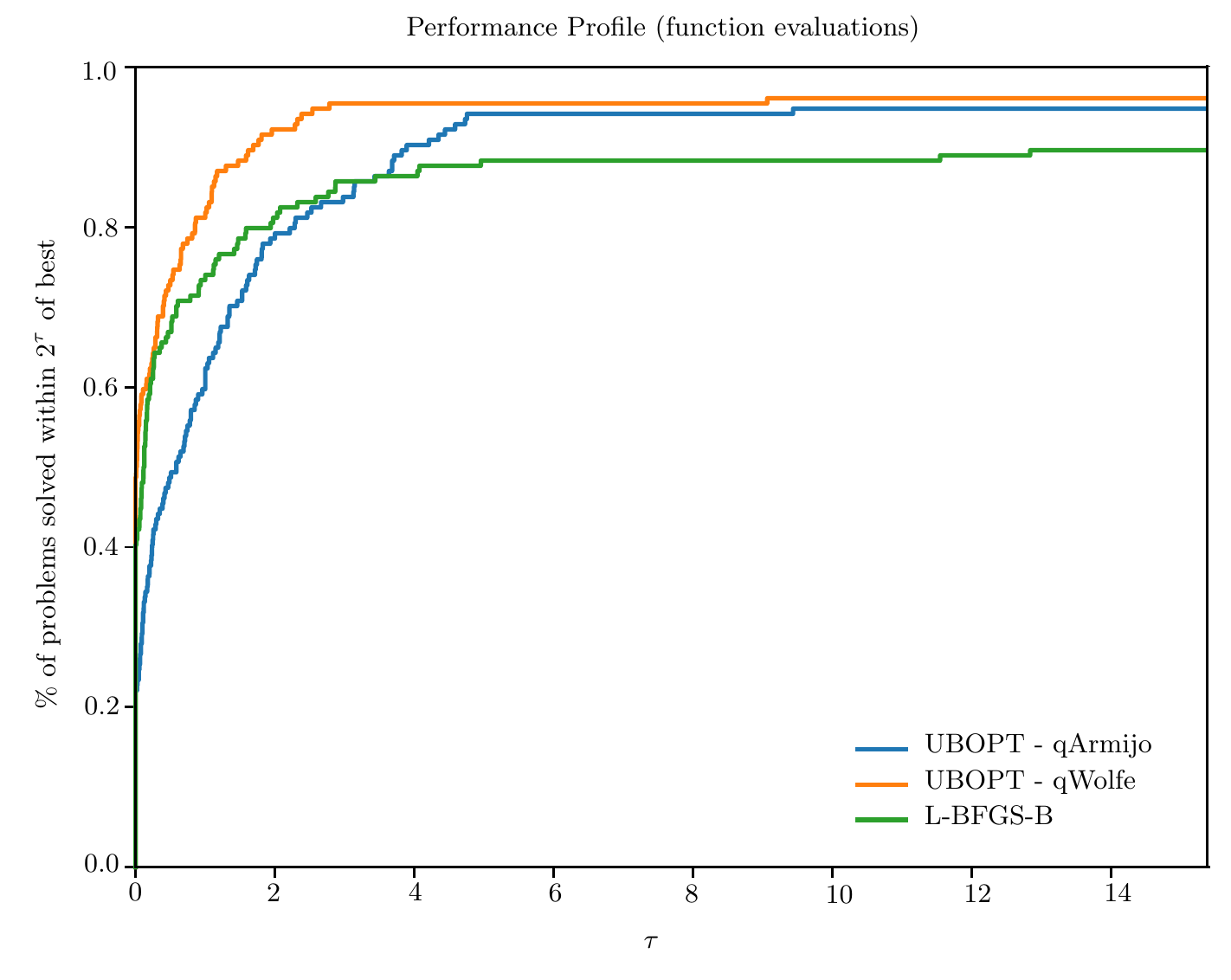}
   \caption{\label{fig:pp-BC-funs}\footnotesize Performance profiles for
     the number of function evaluations required to solve 154
     bound-constrained problems from the CUTEst test set. The figure gives
     the profiles for the three solvers \UBOPTqWolfe\@, \UBOPTqArmijo\@, and
     \texttt{L-BFGS-B}~\cite{ByrLNZ95}.}
  \end{center}
\end{figure}
Figure~\ref{fig:pp-BC-funs} gives the performance profiles for the 154
problems for \UBOPTqWolfe\@, \UBOPTqArmijo\@, and \LBFGSB. The profile
utilized the total number of function evaluations for comparison.
Additional information about the runs used to generate the performance
profiles is given by Ferry \etal~\cite{FerGWZ20d}.  The results indicate
that using a quasi-Wolfe search in \UBOPT{} resulted in a substantially
better performance with respect to function calls than using a quasi-Armijo
search, and comparable and more robust performance with respect to \LBFGSB.

\medskip
A benefit of the Wolfe conditions in the unconstrained case is that the
restriction on the directional derivative guarantees that the approximate
curvature $\big(\Grad f(x\kp1) - \Grad f(x_k)\big){}\T (x\kp1 - x_k)$ is
positive, which is a necessary condition for the quasi-Newton update to
give a positive-definite approximate Hessian.  In the bound-constrained
case, the use of a quasi-Wolfe projected search makes it more likely that
the update can be applied, but it is not possible to guarantee an update in
all cases.  If the next iterate is given by $x\kp1 = \ProjF(x_k + \alpha_k
p_k)$, where $\alpha_k$ is a quasi-Wolfe step, then $\big(\Grad f(x\kp1) -
\Grad f(x_k)\big){}\T (x\kp1 - x_k)$ need not be greater than zero if the
path $\ProjF(x_k + \alpha_k p_k)$ changes direction for some $\alpha \in
(0,\alpha_k)$. If it does change direction, $\fP'(0)$ and $\fM'(\alpha_k)$
may be directional derivatives of $f$ in a direction other than $x\kp1 -
x_k$. This situation is illustrated in Figure~\ref{fig:NoUpdate}, which
depicts a two-dimensional region with lower bounds $x_1 = 0$ and $x_2 =
0$. In this example $\fP'(0)$ is a directional derivative of $f$ in
direction $\elem{p_k}_1$ and $\fM'(\alpha_k)$ is a directional derivative
of $f$ in direction $\elem{p_k}_2$.  As a result, if the path changes
direction for $\alpha \in (0,\alpha_k)$, then there is the possibility that
the quasi-Newton update must be skipped.
\begin{figure}[H]
\begin{center}
\includegraphics[scale=0.5]{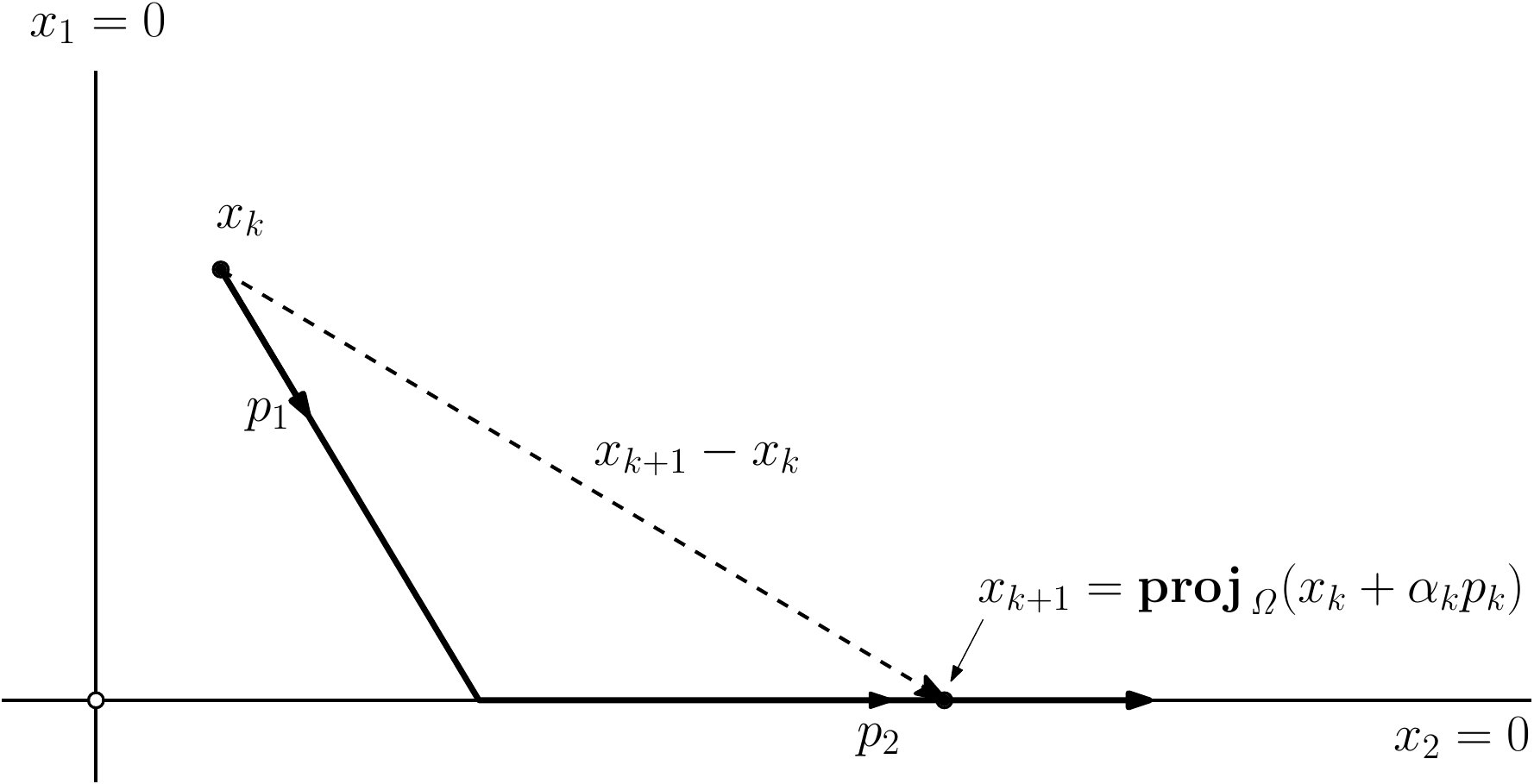}
\parbox[t]{.925\textwidth}{
\caption{\label{fig:NoUpdate} \small Example with no guarantee of an update
  for the approximate Hessian.  }}
\end{center}
\end{figure}

\noindent
It is shown in Section~\ref{sec:convergence} that if $\tSet{x_k}$ converges
to a nondegenerate stationary point, then a quasi-Wolfe search identifies
the active set at the solution in a finite number of iterations. After the
active set stabilizes, a quasi-Wolfe search behaves exactly like a Wolfe
line search in the sense that updates to the approximate Hessian are
guaranteed if $f(x_k + \alpha p_k)$ is bounded below.

To estimate how often the update is likely to be skipped with the
quasi-Wolfe search, statistics were collected from the test problems for
which at least one of the search paths was ``bent'' by projection.  The
application of \UBOPTqWolfe{} resulted in 259 of the potential 637268
updates being skipped ($\approx 0.04\%$). This can be compared to 6537 of
the 679071 updates being skipped ($\approx 1.0\%$) for \UBOPTqArmijo\@. (The
number of updates reflects the number of iterations needed for
convergence.)

\subsection{Numerical results for the projected-search interior method}\label{subsec:results-interior}
%----------------------------------------------------------------------
Numerical results are given for a \Matlab{} implementation of a
projected-search method based on the primal-dual interior method of
Forsgren and Gill~\cite{ForG98}.  For this method, the unconstrained
function
\begin{multline*}
  f(x) - \sum_{j=1}^n \left\{\mu\ln\big(\bLj\big)
                + \mu \ln\big(\zj1(\bLj) \big)
                - \zj1(\bLj)\right\}                      \\
       - \sum_{j=1}^n \left\{\mu\ln\big(\bUj\big)
                + \mu\ln\big(\zj2(\bUj) \big)
                - \zj2(\bUj)\right\}
\end{multline*}
is minimized for a sequence of $\mu$-values such that $\mu \to 0$.  This
implies that the function $M$ of problem (\ref{IPBC}) is given by $M(v
\given \mu) = M(x,z_1,z_2 \given \mu)$, with
\[
 v     = \pmat{ x \\ z_1 \\ z_2 },  \qquad
 \bL_v = \pmat{ \bL \\ 0 \\ 0   },  \wordss{and}
 \bU_v = \pmat{ \bU \\ +\infty \\ +\infty   }.
\]
At any $(x,z_1,z_2)$ such that $\bL < x < \bU$, $z_1 >0$ and $z_2 >0$, let
$X_1=\diag\!\big(x_j - \bL_j\big)$, $X_2=\diag\!\big(\bU_j - x_j\big)$,
$Z_1 = \diag(\zj1)$, and $Z_2 = \diag(\zj2)$.  One iteration of Newton's
method for minimizing $M(x,z_1,z_2 \given \mu)$ requires solving the
equations $\Hess M(v \given \mu) \dv = - \Grad M(v \given \mu)$.  If the
diagonal matrices $\mu X_1\inv $ and $\mu X_2\inv$ in the expression for
$\Hess M(v \given \mu)$ are replaced by $Z_1$ and $Z_2$, we obtain an
approximate Hessian with $n\times n$ principal minor $H_k = \Hess f(x) +
X_1\inv Z\d1 + X_2\inv Z\d2$.  It follows that one iteration of an
\emph{approximate} Newton method for minimizing $M(x,z_1,z_2 \given \mu)$
gives the estimate $(x + \dx$, $z_1+\dz_1$, $z_2 + \dz_2)$, where $\dz_1 =
- X_1\inv\big(\zd1\dottimes(x + \dx - \bL) - \mu e\big)$, $\dz_2 = -
X_2\inv\big(\zd2\dottimes(\bU - x - \dx) - \mu e\big)$, and $\dx$ satisfies
the equations
\begin{equation} \label{eq:ip-Hessian}
 H_k\dx = -  \big(\Grad f(x) -  \mu X_1\inv e + \mu X_2\inv e \big).
\end{equation}
The equations for $(\dx$, $\dz_1$, $\dz_2)$ are equivalent to the
primal-dual path-following equations for problem (\ref{BC}).  Let $v_k$
denote a point such that $\bL_v < v_k < \bU_v$ and let $\dv_k$ denote the
solution of the approximate Newton equations at $v_k$.  If the matrix $H_k$
of (\ref{eq:ip-Hessian}) is positive definite, then $\dv$ is a descent
direction for $M(x,z_1,z_2 \given \mu)$.  Otherwise a positive-definite
modified matrix $\Hhat_k \approx H_k$ must be used.  If necessary, the
matrix $H_k$ was modified using the method of W\"{a}chter and
Biegler~\cite[Algorithm~IC, p.~36]{WacB06}, which factors the matrix $H_k +
\delta I_n$.  Each (possibly perturbed) $H_k$ matrix was factored using the
\Matlab{} built-in command \texttt{LDL}.
%, which uses the routine MA57.

\medskip
Results are presented from two variants of the Forsgren-Gill method.  The
first, \texttt{PD-Wolfe}, is the conventional primal-dual method
implemented with a Wolfe line search; the second, \texttt{PDproj-qWolfe},
is the interior-point projected-search method proposed in
Section~\ref{subsec:Interior-point-methods}.  As the underlying interior
method is the same in both cases, the results show the benefits of
formulating the method as a projected-search method.

The algorithms were considered to have solved a problem successfully if
\[
 \max \Big\{ \infnorm{ \max(0,  g(x)\dottimes x_\ell)},
             \infnorm{ \max(0,- g(x)\dottimes x_u   )}\Big\}
    \le 10^{-5},
\]
where $x_\ell = \min\left\{ 1,(x - \ell)\dotdiv (1 + \mod{\ell}) \right\}$,
$x_u = \min\left\{ 1,(u - x)\dotdiv(1 + \mod{u}) \right\}$, and $g(x) =
\Grad f(x)/(\max\{ 1, \infnorm{\Grad f(x)}\})$.  A limit of 500 was placed
on the number of iterations.  The strategy for choosing the barrier
parameter $\mu$ was that used in the method of Gertz and
Gill~\cite{GerG04}. The fraction-to-the-boundary parameter $\btol$ of
(\ref{eq:FRk-defined}) was set at $0.9$.

All testing was done using \textsc{Matlab} version R2019a on an iMac with a
3.0 GHz Intel Zeon W processor and 128 GB of 800 MHz DDR4 RAM running macOS,
version 10.14.6 (64 bit).  Results were obtained for a subset of the
bound-constrained problems in \CUTEst{} for which $n$ is of the order of 600
or less, and the dimension of the problem may be set at the largest value
less than 1000.  This gave 137 problems ranging in size from \Cute{BQP1VAR}
(one variable) to \Cute{POWELLBC} (1000 variables).  Exact second
derivatives were used for all the runs.

The full list of results is given in
Table~\ref{table:fulltestsetIP-auto}. An entry ``\texttt{--}'' indicate
that the convergence criterion was not satisfied in 500 iterations.  An
``$i$'' implies that the method was terminated because the line search was
unable to find a sufficiently improved point. (In all cases, this occurred
on iterations at which $H_k$ (\ref{eq:ip-Hessian}) was modified.)

{\texttt{\footnotesize
\begin{longtable}{|l|r|rr|rr|}
  \caption{\label{table:fulltestsetIP-auto}Interior-point methods on 137 problems.}\\\hline
  \multicolumn{2}{|c|}{}&%
  \multicolumn{2}{c|}{\texttt{PD-Wolfe}}&%
  \multicolumn{2}{c|}{\texttt{PDproj-qWolfe}\hstrt}\\\hline
  \multicolumn{1}{|c}{Problem}&%
  \multicolumn{1}{|c|}{$n$}&%
  \multicolumn{1}{c}{\texttt{Itns}}&%
  \multicolumn{1}{c|}{Nf}&%
  \multicolumn{1}{c}{\texttt{Itns}}&%
  \multicolumn{1}{c|}{Nf}\hstrt\\\hline
  \endfirsthead
  \caption[]{Comparison of interior methods on 137 problems. (Continued)} \\\hline
  \multicolumn{2}{|c|}{}&%
  \multicolumn{2}{c|}{\texttt{PD-Wolfe}}&%
  \multicolumn{2}{c|}{\texttt{PDproj-qWolfe}\hstrt}\\\hline
  \multicolumn{1}{|c}{Problem}&%
  \multicolumn{1}{|c|}{$n$}&%
  \multicolumn{1}{c}{\texttt{Itns}}&%
  \multicolumn{1}{c|}{Nf}&%
  \multicolumn{1}{c}{\texttt{Itns}}&%
  \multicolumn{1}{c|}{Nf}\hstrt\\\hline
  \endhead
  \hline
  \endfoot
%                     |   PD-Wolfe       |   PDproj-qWolfe             |
%     Problem         |    n  |   Itns   |    nF  |  Itns    |   nF    |
\Cute{3PK      }\hstrt&    30 &     10   &    11  &     6    &     7   \\
\Cute{AIRCRFTB }      &     8 &     10   &    15  &    10    &    15   \\
\Cute{ALLINIT  }      &     4 &     10   &    14  &    10    &    14   \\
\Cute{BDEXP    }      &   500 &     13   &    14  &    11    &    12   \\
\Cute{BIGGS3   }      &     6 &      8   &    14  &     8    &    14   \\
\Cute{BIGGS5   }      &     6 &     47   &    71  &    47    &    71   \\
\Cute{BIGGSB1  }      &    25 &     15   &    16  &     9    &    11   \\
\Cute{BLEACHNG }      &    17 &     20   &    22  &     5    &     6   \\
\Cute{BOX2     }      &     3 &      7   &    11  &     7    &    11   \\
\Cute{BQP1VAR  }      &     1 &     11   &    14  &    10    &    11   \\
\Cute{BQPGABIM }      &    50 &     16   &    17  &    17    &    21   \\
\Cute{BQPGASIM }      &    50 &     16   &    17  &    17    &    21   \\
\Cute{BRATU1D  }      &   503 &      4   &     5  &     4    &     5   \\
\Cute{CAMEL6   }      &     2 &      8   &    10  &     7    &     9   \\
\Cute{CHARDIS0 }      &   400 &     10   &    11  &     3    &     4   \\
\Cute{CHEBYQAD }      &   100 &     51   &    58  &    47    &    54   \\
\Cute{CLPLATEA }      &   529 &      5   &     6  &     5    &     6   \\
\Cute{CLPLATEB }      &   529 &      2   &     3  &     2    &     3   \\
\Cute{CLPLATEC }      &   529 &      1   &     2  &     1    &     2   \\
\Cute{CVXBQP1  }      &   100 &     10   &    11  &     7    &     9   \\
\Cute{DECONVB  }      &    63 &     27   &    29  &    22    &    27   \\
\Cute{DRCAV1LQ }      &   196 &     31   &    38  &    31    &    38   \\
\Cute{DRCAV2LQ }      &   196 &     20   &    30  &    20    &    30   \\
\Cute{DRCAV3LQ }      &   196 &     21   &    35  &    21    &    35   \\
\Cute{EG1      }      &     3 &     16   &    21  &    12    &    18   \\
\Cute{EXPLIN   }      &   120 &     64   &    66  &    12    &    16   \\
\Cute{EXPLIN2  }      &   120 &     68   &    70  &    12    &    16   \\
\Cute{EXPQUAD  }      &   120 &     20   &    21  &    14    &    16   \\
\Cute{FBRAIN2LS}      &     4 &     16   &    20  &    16    &    22   \\
\Cute{FBRAINLS }      &     2 &     10   &    11  &     9    &    10   \\
\Cute{GENROSEB }      &   500 &    140   &   143  &    11    &    19   \\
\Cute{GRIDGENA }      &   500 &      6   &     7  &     4    &     5   \\
\Cute{HADAMALS }      &   400 &     --   &    --  &    84    &   129   \\
\Cute{HARKERP2 }      &   500 &     29   &    30  &    14    &    15   \\
\Cute{HART6    }      &     6 &      8   &    10  &     6    &     9   \\
\Cute{HATFLDA  }      &     4 &     13   &    20  &     5    &     7   \\
\Cute{HATFLDB  }      &     4 &     11   &    12  &    12    &    13   \\
\Cute{HATFLDC  }      &    25 &      7   &     8  &     5    &     6   \\
\Cute{HIMMELP1 }      &     2 &     12   &    16  &     8    &    14   \\
\Cute{HOLMES   }      &   180 &     20   &    24  &    10    &    19   \\
\Cute{HS1      }      &     2 &     27   &    33  &    25    &    36   \\
\Cute{HS2      }      &     2 &     15   &    20  &    14    &    22   \\
\Cute{HS3      }      &     2 &     12   &    21  &     6    &     7   \\
\Cute{HS3MOD   }      &     2 &     13   &    21  &    14    &    27   \\
\Cute{HS4      }      &     2 &     15   &    23  &    10    &    13   \\
\Cute{HS5      }      &     2 &      8   &     9  &     7    &     9   \\
\Cute{HS25     }      &     3 &      0   &     1  &     0    &     1   \\
\Cute{HS38     }      &     4 &     42   &    54  &    38    &    62   \\
\Cute{HS45     }      &     5 &     11   &    12  &    10    &    12   \\
\Cute{HS110    }      &    10 &      7   &     8  &     6    &     7   \\
\Cute{JNLBRNG1 }      &   529 &     10   &    14  &     8    &    12   \\
\Cute{JNLBRNG2 }      &   529 &      9   &    13  &     7    &    11   \\
\Cute{JNLBRNGA }      &   529 &     11   &    15  &     8    &    12   \\
\Cute{JNLBRNGB }      &   529 &     12   &    16  &     7    &    11   \\
\Cute{KOEBHELB }      &     3 &     80   &   115  &    80    &   116   \\
\Cute{LINVERSE }      &   199 &     35   &    40  &    19    &    27   \\
\Cute{LMINSURF }      &   121 &      8   &    12  &     8    &    12   \\
\Cute{LOGROS   }      &     2 &     35   &    53  &    30    &    51   \\
\Cute{MAXLIKA  }      &     8 &     49   &    54  &    28    &    42   \\
\Cute{MCCORMCK }      &   500 &     11   &    12  &     6    &     7   \\
\Cute{MDHOLE   }      &     2 &     43   &    65  &    47    &    84   \\
\Cute{MINSURF  }      &    64 &      4   &     8  &     4    &     8   \\
\Cute{MINSURFO }      &   731 &     16   &    20  &    12    &    19   \\
\Cute{NCVXBQP1 }      &   100 &    439   &   442  &    14    &    17   \\
\Cute{NCVXBQP2 }      &   100 &    426   &   431  &    14    &    20   \\
\Cute{NCVXBQP3 }      &   100 &    284   &   292  &     9    &    15   \\
\Cute{NLMSURF  }      &   961 &      9   &    11  &     9    &    11   \\
\Cute{NOBNDTOR }      &   484 &     10   &    11  &     9    &    11   \\
\Cute{NONSCOMP }      &   500 &     12   &    13  &     8    &     9   \\
\Cute{OBSTCLAE }      &   529 &     14   &    17  &    11    &    14   \\
\Cute{OBSTCLAL }      &   529 &     15   &    19  &     9    &    13   \\
\Cute{OBSTCLBL }      &   529 &     62   &    63  &     8    &    10   \\
\Cute{OBSTCLBM }      &   529 &     11   &    12  &     6    &     8   \\
\Cute{OBSTCLBU }      &   529 &     82   &    83  &     8    &    10   \\
\Cute{ODC      }      &   144 &     15   &    16  &    15    &    16   \\
\Cute{OSLBQP   }      &     8 &     13   &    16  &    10    &    14   \\
\Cute{PALMER1  }      &     4 &     22   &    29  &    10    &    15   \\
\Cute{PALMER1A }      &     6 &     48   &    59  &    39    &    55   \\
\Cute{PALMER1B }      &     4 &     21   &    28  &    15    &    22   \\
\Cute{PALMER1E }      &     8 &     85   &   108  &    52    &    70   \\
\Cute{PALMER2  }      &     4 &     15   &    20  &     9    &    13   \\
\Cute{PALMER2A }      &     6 &     76   &    96  &    76    &   101   \\
\Cute{PALMER2B }      &     4 &     17   &    20  &    13    &    19   \\
\Cute{PALMER2E }      &     8 &     63   &    81  &    60    &    82   \\
\Cute{PALMER3  }      &     4 &     22   &    29  &    21    &    29   \\
\Cute{PALMER3A }      &     6 &     69   &    87  &    79    &   111   \\
\Cute{PALMER3B }      &     4 &     16   &    17  &    13    &    17   \\
\Cute{PALMER3E }      &     8 &     51   &    62  &    48    &    67   \\
\Cute{PALMER4  }      &     4 &     17   &    24  &    20    &    27   \\
\Cute{PALMER4A }      &     6 &     48   &    58  &    49    &    68   \\
\Cute{PALMER4B }      &     4 &     17   &    19  &    13    &    14   \\
\Cute{PALMER4E }      &     8 &     39   &    45  &    37    &    47   \\
\Cute{PALMER5A }      &     8 &     --   &    --  &    --    &    --   \\%itns \indef
\Cute{PALMER5B }      &     9 &    437   &   688  &   449    &   791   \\
\Cute{PALMER5D }      &     4 &      2   &     3  &     2    &     3   \\
\Cute{PALMER5E }      &     8 &     --   &    --  &    --    &    --   \\%itns singular
\Cute{PALMER6A }      &     6 &    114   &   146  &   115    &   161   \\
\Cute{PALMER6E }      &     8 &     46   &    59  &    46    &    59   \\
\Cute{PALMER7A }      &     6 &     --   &    --  &    --    &    --   \\%itns singular
\Cute{PALMER7E }      &     8 & \notdef  &\notdef &\notdef   &\notdef  \\%line search \indef
\Cute{PALMER8A }      &     6 &     35   &    44  &    33    &    43   \\
\Cute{PALMER8E }      &     8 &     28   &    30  &    23    &    29   \\
\Cute{PENTDI   }      &   500 &      9   &    14  &     7    &    12   \\
\Cute{PFIT1LS  }      &     3 &    235   &   344  &   245    &   368   \\
\Cute{PFIT2LS  }      &     3 &     36   &    50  &    33    &    54   \\
\Cute{PFIT3LS  }      &     3 &    122   &   171  &   153    &   240   \\
\Cute{PFIT4LS  }      &     3 &    222   &   325  &   260    &   397   \\
\Cute{POWELLBC }      &  1000 &     --   &    --  &   305    &    --   \\
\Cute{PROBPENL }      &   500 &      5   &     6  &     2    &     3   \\
\Cute{PSPDOC   }      &     4 &     10   &    16  &     9    &    15   \\
\Cute{QR3DLS   }      &   610 &    198   &   369  &   193    &   361   \\
\Cute{QRTQUAD  }      &   120 &     46   &    50  &    26    &    43   \\
\Cute{QUDLIN   }      &   120 &    178   &   179  &     5    &     6   \\
\Cute{RAYBENDL }      &   130 &  \notdef &\notdef &\notdef   &\notdef  \\%line search singular Hessian
\Cute{RAYBENDS }      &   130 &  \notdef &\notdef &\notdef   &\notdef  \\%line search singular Hessian
\Cute{S368     }      &   100 &     50   &    56  &    25    &    30   \\
\Cute{SANTALS  }      &    21 &     37   &    51  &    34    &    51   \\
\Cute{SCOND1LS }      &   502 &    377   &   555  &   252    &   419   \\
\Cute{SIM2BQP  }      &     2 &     12   &    13  &    11    &    13   \\
\Cute{SIMBQP   }      &     2 &      9   &    10  &     8    &    10   \\
\Cute{SINEALI  }      &   100 &     11   &    12  &     8    &     9   \\
\Cute{SPECAN   }      &     9 &     12   &    13  &     8    &    10   \\
\Cute{SSC      }      &  1122 &      2   &     3  &     2    &     3   \\% 992 free variables
\Cute{TORSION1 }      &   484 &      9   &    10  &     9    &    11   \\
\Cute{TORSION2 }      &   484 &      9   &    10  &     7    &     9   \\
\Cute{TORSION3 }      &   484 &     10   &    11  &     8    &    10   \\
\Cute{TORSION4 }      &   484 &     10   &    11  &     7    &     9   \\
\Cute{TORSION5 }      &   484 &     11   &    12  &     7    &     9   \\
\Cute{TORSION6 }      &   484 &     10   &    11  &     9    &    11   \\
\Cute{TORSIONA }      &   484 &      9   &    10  &     9    &    11   \\
\Cute{TORSIONB }      &   484 &      8   &     9  &     6    &     8   \\
\Cute{TORSIONC }      &   484 &     10   &    11  &     8    &    10   \\
\Cute{TORSIOND }      &   484 &      9   &    10  &     7    &     9   \\
\Cute{TORSIONE }      &   484 &     10   &    11  &     8    &    10   \\
\Cute{TORSIONF }      &   484 &      9   &    10  &     6    &     8   \\
\Cute{WEEDS    }      &     3 &     33   &    39  &    27    &    42   \\
\Cute{YFIT     }      &     3 &     45   &    54  &    42    &    63
\end{longtable}}}

Figure~\ref{competepp-IP} gives the performance profiles for the total
number of function evaluations required to solve the 137 problems.  The
profiles compare the primal-dual interior method \texttt{PD-Wolfe}
implemented with a Wolfe line search and a projected-search interior method
\texttt{PDproj-qWolfe} with a quasi-Wolfe line search (i.e., the method
described in Section~\ref{subsec:Interior-point-methods}).
Figure~\ref{competepp-IP} and the results of
Table~\ref{table:fulltestsetIP-auto} indicate that a projected-search
interior method with a quasi-Wolfe line search can provide substantial
improvements in robustness and performance compared to a conventional
interior method.

\begin{figure}[ht]
  \begin{center}
   \includegraphics[width=6cm,height=5.75cm]{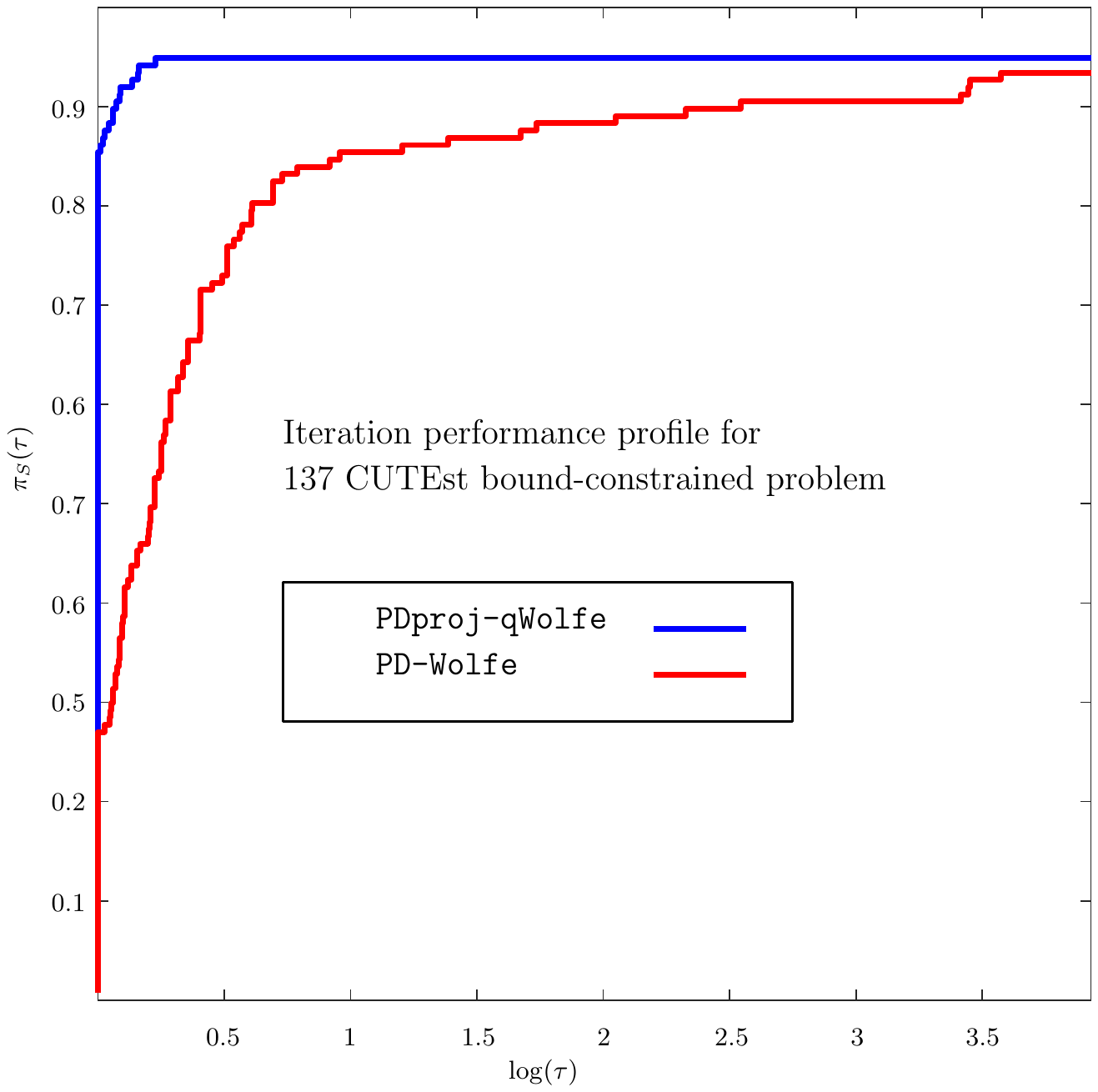}\quad
   \includegraphics[width=6cm,height=5.75cm]{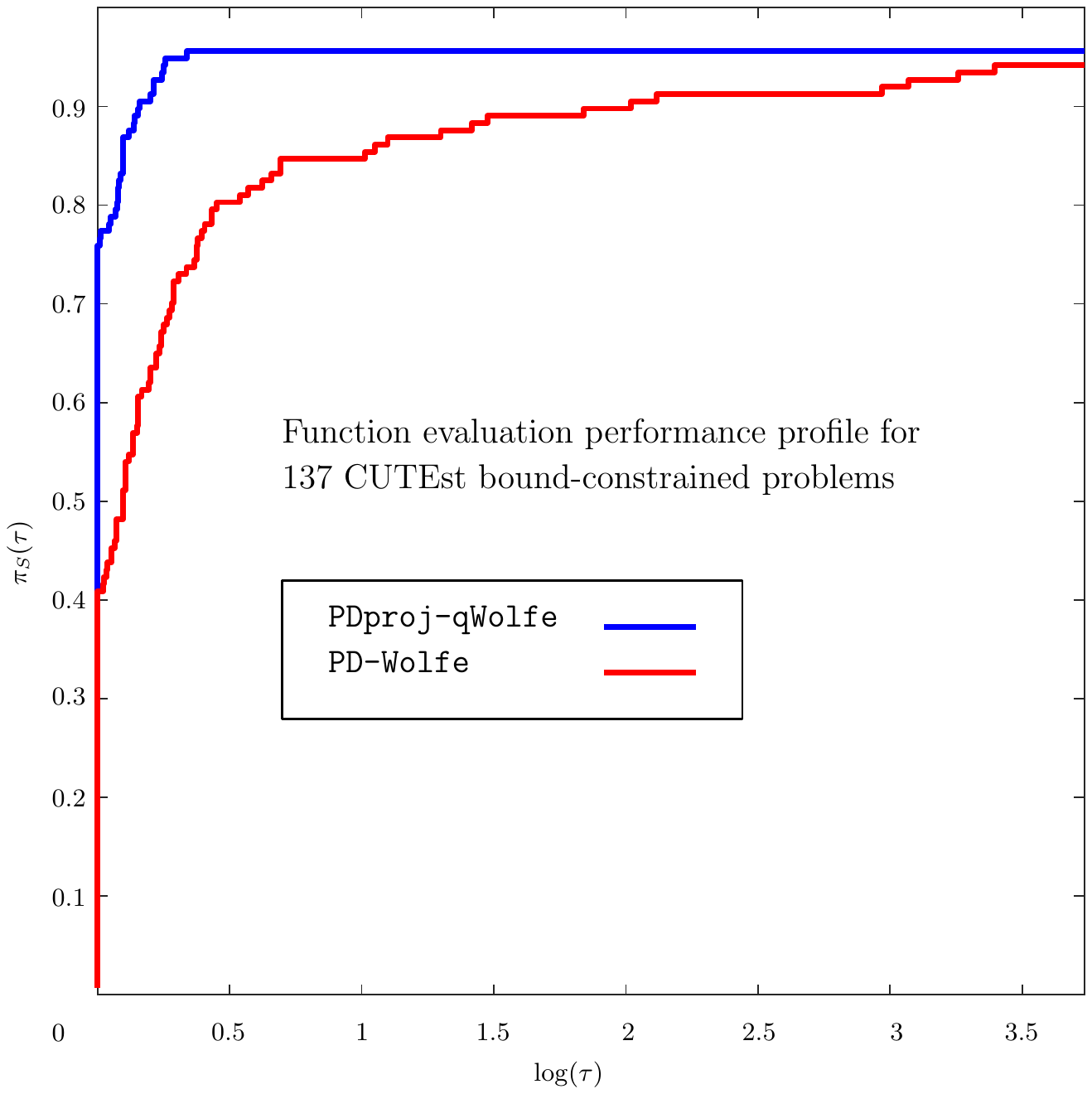}
   \caption{\label{competepp-IP}\scriptsize \small Performance profiles for
two interior-point methods: a conventional primal-dual method with a Wolfe
line search (\texttt{PD-Wolfe}) and a projected-search interior method with
a quasi-Wolfe line search (\texttt{PDproj-qWolfe}). The figure gives the
performance profiles for the total number of iterations and function
evaluations required to solve 137 bound-constrained problems from the
CUTEst test set. }
  \end{center}
\end{figure}

\section{Summary and Conclusions}\label{sec:Summary}
%--------------------------------
Two general classes of projected-search methods based on active-set methods
and interior-point methods have been proposed for bound-constrained minimization.
These methods utilize a new quasi-Wolfe search that is similar to a
conventional Wolfe line search, except that a step is accepted under a
wider range of conditions that take into consideration steps at which the
restriction of the objective function on the search path is not
differentiable. As in the unconstrained case, the quasi-Wolfe step can be
computed using safeguarded polynomial interpolation and the accuracy of the
step can be adjusted.  Standard results
associated with a conventional Wolfe line search are extended to the
quasi-Wolfe case.

The convergence of both the active-set and interior methods are established
under assumptions that are typical in the analysis of projected-search
methods.  In the case of the projected search active-set method, it is
shown that if the iterates converge to a nondegenerate stationary point,
then the optimal active set is identified in a finite number of iterations.
It follows that once the optimal active set has been identified, any method
in this class will have the same convergence rate as its unconstrained
counterpart.

Computational results are given for two specific methods: an active-set
method that uses a limited-memory quasi-Newton approximate Hessian; and an
interior-point method based on Newton's method applied to the perturbed
primal-dual optimality conditions.  The results show that in these
contexts, a quasi-Wolfe line search is substantially more efficient and
reliable than an Armijo line search.

%% Moreover, a particular method based on a limited-memory quasi-Newton method
%% to obtain the feasible descent direction is shown to be competitive with
%% the state-of-the-art package \LBFGSB\@.

\bibliographystyle{myplain}
\bibliography{references}

\end{document}